\newcommand*{\defeq}{\mathrel{\rlap{%
                     \raisebox{0.3ex}{$\m@th\cdot$}}%
                     \raisebox{-0.3ex}{$\m@th\cdot$}}%
                     =}
\newcommand*{\eqdef}{=
										 \mathrel{\rlap{%
                     \raisebox{0.3ex}{$\m@th\cdot$}}%
                     \raisebox{-0.3ex}{$\m@th\cdot$}}%
										}
\newcommand{\unit}{\mathbbm 1} %{1\!\!1}
\newcommand{\R}{\mathbb{R}}
\newcommand{\mcd}{\mathcal{D}}
\newcommand{\mcr}{\mathcal{R}}
\newcommand{\ep}{\epsilon}
\newcommand{\lb}{\lambda}
\newcommand{\mbs}{\mathbb{S}}
\newcommand{\wbmod}{BMO_{\mcd}(w)}
\newcommand{\wbmosd}{BMO^2_{\mcd}(w)}
\newcommand{\nbmosd}{BMO^2_{\mcd}(\nu)}
\newcommand{\avgb}{\left<b\right>}
\newcommand{\La}{\left\langle }
\newcommand{\Ra}{\right\rangle }
\numberwithin{equation}{section}
\newtheorem{thm}[equation]{Theorem}
\newtheorem{lm}[equation]{Lemma}
\newtheorem{cor}[equation]{Corollary}
\newtheorem{prop}[equation]{Proposition}
\newtheorem*{prop*}{Proposition}
\theoremstyle{remark}
\newtheorem*{rem*}{Remark}
\begin{document}

\title{Commutators in the Two-Weight Setting}

\author{Irina Holmes}
\address{Irina Holmes, School of Mathematics\\ Georgia Institute of Technology\\ 686 Cherry Street\\ Atlanta, GA USA 30332-0160}
\email{irina.holmes@math.gatech.edu}

\author[Michael T. Lacey]{Michael T. Lacey$^{\dagger}$}
\address{Michael T. Lacey, School of Mathematics\\ Georgia Institute of Technology\\ 686 Cherry Street\\ Atlanta, GA USA 30332-0160}
\email{lacey@math.gatech.edu}
\thanks{$\dagger$  Research supported in part by a National Science Foundation DMS grant \#1265570.}

\author[Brett D. Wick]{Brett D. Wick$^{\ddagger}$}
\address{Brett D. Wick, School of Mathematics\\ Georgia Institute of Technology\\ 686 Cherry Street\\ Atlanta, GA USA 30332-0160}
\email{wick@math.gatech.edu}
\thanks{$\ddagger$  Research supported in part by National Science Foundation DMS grants \#1603246 and \#1560955.}

\subjclass[2000]{Primary: 42, 42A, 42B, 42B20, 42B25, 42A50, 42A40, }
\keywords{Commutators, Calder\'on--Zygmund Operators, Bounded Mean Oscillation, Weights}

\begin{abstract}
Let $  R $ be the vector of Riesz transforms on $ \mathbb R ^{n}$, and 
 let  $\mu,\lambda\in A_p$ be two weights on  $ \mathbb R ^{n}$, $ 1< p < \infty $.  
 The two-weight norm inequality for the commutator
 $ \lVert [b, R] \;:\; L ^{p} (\mu ) \mapsto L ^{p} (\lambda )\rVert$  is shown to be 
equivalent to the function $ b$ being in a $BMO$ space adapted to $ \mu $ and $ \lambda $. 
This is a common extension of a result of Coifman-Rochberg-Weiss in the case of both $ \lambda $ and $ \mu $ 
being Lebesgue measure,  and Bloom in the case of dimension one. 
\end{abstract}

\maketitle
\setcounter{tocdepth}{1}
\tableofcontents

\section{Introduction and Statement of Main Results}

The foundational paper of Coifman-Rochberg-Weiss \cite{CRW} set out a real-variable counterpart to 
a classical theorem of Nehari \cite{MR0082945}.  
It characterized $BMO$, the real-variable space of functions with bounded mean oscillation,  in terms of commutators with Riesz transforms.  Several lines of investigation came out of this work:  generalizations to spaces of homogeneous type, \cites{MR618077}, multi-parameter extensions, \cites{DalencOu,MR1961195,MR2530853,MR2491875}, connections to factorization of function spaces, \cites{MR957051,MR518108, MR2663410, MR2898705}, div-curl lemmas, \cites{CLMS, MR3007645} and additional interpretations in operator theory \cites{MR0467384,MR1880830,MR0082945}. 

In 1985,  Bloom \cite{Bloom} proved a \emph{two-weight} extension 
of the Nehari \cite{MR0082945} result in one dimension. 
In particular, for the Hilbert transform, 
$$
Hf(x):=\textnormal{p.v.} \frac{1}{\pi}\int_{\mathbb{R}} \frac{f(y)}{x-y}\,dy, 
$$
a choice of $ 1< p < \infty $, and two weights $\mu$ and $\lambda$ in Muckenhoupt's $ A_p $ class, (see Section \ref{ss:apweights} for the definitions of these weights), the commutator 
$[b,H](f)=bHf-H(bf)$ is bounded from $ L ^{p} (\mu )$ to $ L ^{p} (\nu ) $ if and only if the function $ b$ satisfies 
$$
\left\Vert b\right\Vert_{BMO(\nu)}:=\sup_{Q} \left( \frac{\int_Q |b - \left<b\right>_Q|\,dx}{\int_{Q} \nu\,dx} \right)<\infty, 
$$
where $ \nu=\mu^{\frac{1}{p}}\lambda^{-\frac{1}{p}}$.  If $ \mu = \lambda $, the $ BMO (\nu )$ space 
is the classical one, and the result is well-known. But, in full generality, this is a subtle result, as it is 
a characterization in the triple of $ \mu , \lambda $ and $ b$.  

The purpose of this paper is to extend Bloom's result to the setting of Coifman-Rochberg-Weiss.   
%Our first result is an  upper bound on commutators with general Calder\'on-Zygmund operators.  
Recall that a Calder\'on--Zygmund operator associated to a kernel $K(x,y)$ is an integral operator:
$$
Tf(x):=\int_{\mathbb{R}^n} K(x,y)f(y)\,dy,\quad x\notin\textnormal{supp} f,
$$
and that the kernel satisfies the standard size and smoothness estimates
\begin{gather*}
\left\vert K(x,y)\right\vert  \leq  \frac{C}{\left\vert x-y\right\vert^n}, \\
\left\vert K(x+h,y)-K(x,y)\right\vert +\left\vert K(x,y+h)-K(x,y)\right\vert  \leq  C\frac{\left\vert h\right\vert^{\delta}}{\left\vert x-y\right\vert^{n+\delta}},
\end{gather*}
for all $\left\vert x-y\right\vert>2\left\vert h\right\vert>0$ and a fixed $\delta\in (0,1]$.

Our first main result is the following upper bound for the commutator, $[b,T](f):=bTf-T(bf)$, with a Calder\'on--Zygmund operator.
\begin{thm} \label{T:UpperBound}
Let $T$ be a Calder{\'o}n-Zygmund operator on $\R^n$ and $\mu, \lb \in A_p$ with $1<p<\infty$. Suppose $b \in BMO(\nu)$, where $\nu = \mu^{\frac{1}{p}} \lb^{-\frac{1}{p}}$. Then
	\begin{equation*} 
	\| [b, T]: L^p(\mu) \rightarrow L^p(\lb) \| \leq c \|b\|_{BMO(\nu)},
	\end{equation*}
where $c$ is a constant depending on the dimension $n$, the operator $T$, and $\mu$, $\lb$, and $p$.
\end{thm}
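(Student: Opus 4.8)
\emph{Proof proposal.}

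\textbf{Dyadic reduction.} The plan is to pass to a dyadic model for $T$, decompose the resulting dyadic commutator into paraproducts and Haar shift operators, and then prove (or quote) the one- and two-weight bounds needed for each building block. First I would invoke Hyt\"onen's dyadic representation theorem: $T$ is a weighted average, over randomized dyadic grids $\mcd$, of Haar shift operators $\mbs^{m,n}_{\mcd}$ of complexity $(m,n)$, with weights $2^{-\ep\max(m,n)}$ for some $\ep>0$ depending on $\delta$ and $n$. Since $b\mapsto[b,\,\cdot\,]$ is linear in the operator, it suffices to prove
\[
\|[b,\mbs^{m,n}_{\mcd}]: L^p(\mu)\to L^p(\lb)\|\ \lesssim\ (1+m+n)\,\|b\|_{BMO(\nu)},
\]
uniformly over the grid $\mcd$, with implied constant depending only on $n$, $p$, $[\mu]_{A_p}$ and $[\lb]_{A_p}$; summing this over $(m,n)$ against $2^{-\ep\max(m,n)}$ then gives the theorem, the polynomial factor in the complexity being harmless. (Note also that the non-dyadic $BMO(\nu)$ norm dominates each dyadic one, so it is legitimate to use it as the upper bound throughout.)

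\textbf{Structure of the dyadic commutator.} Expanding $b$ in the Haar basis of $\mcd$ and using the pointwise product formulas for Haar functions, I would write each product $bg=\Pi_b g+\Pi_b^{*}g+\Gamma_b g$ (paraproduct, its adjoint form, and a diagonal remainder), apply this with $g=\mbs^{m,n}_{\mcd}f$ and with $g=bf$, and collect terms. After the cancellations forced by the shift structure, two types of operators remain: (i) compositions $\Pi_b\circ\mbs'$, $\mbs'\circ\Pi_b$, $\Pi_b^{*}\circ\mbs'$, $\mbs'\circ\Pi_b^{*}$ with $\mbs'$ a Haar-shift-type operator of complexity at most $(m,n)$; and (ii) finitely many Haar-shift-type operators whose coefficients are $c^{Q}_{I,J}\big(\langle b\rangle_{I'}-\langle b\rangle_{J'}\big)$, with $I'$, $J'$ ranging over the dyadic cubes lying between $I$, resp.\ $J$, and $Q$. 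Telescoping over the at most $m+n$ generations separating those cubes, and using the elementary estimate $|\langle b\rangle_{K}-\langle b\rangle_{\widehat K}|\lesssim\|b\|_{BMO(\nu)}\langle\nu\rangle_{\widehat K}$ for a dyadic cube $K$ and its parent $\widehat K$, I would dominate each type-(ii) operator by $(m+n)\,\|b\|_{BMO(\nu)}$ times a Haar-shift-type operator $\mbs'_\nu$ with coefficients $c^{Q}_{I,J}\langle\nu\rangle_Q$.

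\textbf{Building-block estimates.} Three ingredients then close the argument. The first is the arithmetic fact that $\nu=\mu^{1/p}\lb^{-1/p}\in A_2$, with $[\nu]_{A_2}\le([\mu]_{A_p}[\lb]_{A_p})^{1/p}$, which follows from two applications of H\"older's inequality to the $A_p$ conditions for $\mu$ and $\lb$. The second is the one-weight bound $\|\mbs^{m,n}_{\mcd}: L^p(w)\to L^p(w)\|\lesssim_{[w]_{A_p}}1+m+n$ for $w\in A_p$, applied to $w=\mu$ and $w=\lb$, which (with the first ingredient) controls the type-(i) compositions once one has the third and crucial ingredient: the two-weight ``Bloom'' estimate
\[
\|\Pi_b: L^p(\mu)\to L^p(\lb)\|\ \lesssim\ \|b\|_{BMO(\nu)},
\]
together with its counterpart for $\Pi_b^{*}$ (obtained by applying the same estimate to the pair of dual weights $\lb^{1-p'},\mu^{1-p'}\in A_{p'}$, whose linking weight is again $\nu$) and an analogous Bloom-type bound for the shift pieces $\mbs'_\nu$. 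For the paraproduct I would dominate $\|\Pi_b f\|_{L^p(\lb)}$ by the $L^p(\lb)$-norm of the square function $\big(\sum_I|\langle b,h_I\rangle|^2|\langle f\rangle_I|^2\,\unit_I/|I|\big)^{1/2}$ (valid since $\lb\in A_p$), upgrade the $L^1$ definition of $BMO(\nu)$ to the Carleson condition $\sum_{I\subseteq Q}|\langle b,h_I\rangle|^2\lesssim\|b\|_{BMO(\nu)}^2\,\nu(Q)^2/|Q|$ via the John--Nirenberg-type self-improvement available because $\nu\in A_2$, and then apply a two-weight Carleson embedding theorem linking $\mu$ and $\lb$ through $\nu$. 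The operators $\mbs'_\nu$ would be handled by a parallel testing/Carleson argument, again using $\nu\in A_2$ and the $A_p$ properties of $\mu$ and $\lb$.

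\textbf{The main obstacle.} I expect the essential difficulty to lie in the third ingredient: establishing the two-weight bounds for $\Pi_b$, $\Pi_b^{*}$ and the shift pieces $\mbs'_\nu$ with the precise $\|b\|_{BMO(\nu)}$ dependence. This is exactly where the triple $(\mu,\lb,b)$ interacts non-trivially, and where the identity $\nu=\mu^{1/p}\lb^{-1/p}$ and the membership $\nu\in A_2$ must be exploited — both to convert the $L^1$-weighted $BMO$ hypothesis into an $L^2$ Carleson condition and to make the weighted Carleson embedding close. A secondary, bookkeeping-heavy obstacle is verifying the exact cancellation claimed in the decomposition step and keeping all complexity dependence polynomial.
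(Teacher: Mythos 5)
Your architecture is the paper's: Hyt\"onen's representation, the paraproduct decomposition $bg=\Pi_bg+\Pi_b^*g+\Gamma_bg$ applied on both sides of the shift, the two-weight paraproduct bound $\|\Pi_b:L^p(\mu)\to L^p(\lb)\|\lesssim\|b\|_{BMO(\nu)}$ as the crucial ingredient (proved in the paper by weighted $H^1$--$BMO$ duality, which is the same Carleson-embedding mechanism you describe), and a remainder $\mcr^{ij}f=\Pi_{\mbs^{ij}f}b-\mbs^{ij}\Pi_fb$ whose coefficients involve $\La b\Ra_Q-\La b\Ra_P$ and must be telescoped through the common ancestor. That much is right, and you correctly locate the difficulty in the two-weight bounds where the triple $(\mu,\lb,b)$ interacts.

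There are, however, two genuine gaps. First, your treatment of the remainder replaces each telescoping step by the scalar bound $|\La b\Ra_K-\La b\Ra_{\widehat K}|\lesssim\|b\|_{BMO(\nu)}\La\nu\Ra_{\widehat K}$ and then asserts that the resulting operator $\mbs'_\nu$, a Haar-shift-type operator whose coefficients carry the extra factor $\La\nu\Ra_R$, is bounded $L^p(\mu)\to L^p(\lb)$ ``by a parallel testing/Carleson argument.'' This is the entire content of the step and it does not follow from anything you have set up: the coefficients of $\mbs'_\nu$ violate the shift normalization by the unbounded factor $\La\nu\Ra_R$, so the one-weight shift bound is useless, and Sawyer-type testing characterizations apply to positive operators, not to cancellative shifts. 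The bound is in fact true here, but proving it requires exploiting $\La\nu\Ra_R\lesssim\La\mu\Ra_R^{1/p}\La\lb'\Ra_R^{1/q}$ together with a weighted estimate for a \emph{shifted} square function $\widetilde{S_{\mcd}}^{i,j}$ whose norm grows like $2^{\frac n2(i+j)}$, exactly cancelling the $2^{-\frac n2(i+j)}$ decay of the shift coefficients (this is Lemma \ref{L:ShiftedDSF} and the $A_k$, $B_k$ analysis in the paper, where instead of your scalar bound the average differences are expanded into Haar coefficients of $b$ and $H^1$--$BMO$ duality is applied a second time). As stated, your reduction trades one unproved two-weight inequality for another of comparable difficulty. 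Second, you have not addressed the non-cancellative shift $\mbs^{00}=\mbs^{00}_c+\Pi_a+\Pi_d^*$ with $a,d\in BMO_{\mcd}$. The commutator of $b$ with these paraproduct components produces the operators $\Lambda_{a,b}$ and $\Lambda_{d,b}^*$ of \eqref{E:Lambda_ab}--\eqref{E:Lambda*_db}, which are neither compositions of bounded paraproducts nor shift-type operators with average-difference coefficients, and which require their own duality argument (Lemma \ref{L:LambdaBounds}); your two-type classification of the surviving terms does not cover them.
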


Recall that the Riesz transforms are defined by:
$$
R_j(f)(x):=\textnormal{p.v.}\frac{\Gamma\left(\frac{n+1}{2}\right)}{\pi^{\frac{n+1}{2}}}\int_{\mathbb{R}^n} f(y) \frac{x_j-y_j}{\left\vert x-y\right\vert^{n+1}}\,dy,\quad j=1,\ldots, n.
$$
Specializing to the Riesz transforms, we are able to characterize $BMO(\nu)$ in terms of the boundedness of the commutators.  This gives a joint generalization of  Bloom and Coifman-Rochberg-Weiss, which is the main result of the paper.

\begin{thm} \label{T:LowerBound}
For $1<p<\infty$, and  $\mu,\lb\in A_p$, set $\nu = \mu^{\frac{1}{p}}\lb^{-\frac{1}{p}}$. 
Then there are constants $ 0< c < C < \infty $, depending only on $ n, p, \mu$ and $ \lambda $, for which 
\begin{equation}\label{e:Requivalence}
c\|b\|_{BMO(\nu)} \leq \sum_{i=1}^n \left\| [b, R_i] : L^p(\mu) \rightarrow L^p(\lb)\right\|\leq C\|b\|_{BMO(\nu)}.
\end{equation}
\end{thm}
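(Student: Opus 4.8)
The upper bound is exactly Theorem \ref{T:UpperBound} applied to $T = R_i$ and summed over $i$, so only the lower bound $c\|b\|_{BMO(\nu)} \le \sum_i \|[b,R_i]\colon L^p(\mu)\to L^p(\lambda)\|$ requires work. The plan is to adapt the classical Coifman--Rochberg--Weiss lower bound argument, which exploits the fact that on a ball $Q$ one of the Riesz kernels $\frac{x_j-y_j}{|x-y|^{n+1}}$ stays comparable to a nonzero constant (roughly $|Q|^{-1/n}$ up to sign) provided $x$ and $y$ lie in suitably separated translates $Q$ and $Q'$ of the same cube. Concretely, fix a cube $Q$; choose a neighbor cube $Q'$ of the same sidelength, displaced in the $e_j$-direction, so that for $x\in Q$, $y\in Q'$ the factor $x_j - y_j$ has constant sign and $|x-y|\sim \ell(Q)$. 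Then for an appropriate unimodular function $s(x)$ (the sign pattern coming from $b(x) - \langle b\rangle_{Q'}$ or from a smoothed version thereof), one gets the pointwise lower bound
\begin{equation*}
\unit_Q(x)\,|b(x) - \langle b\rangle_{Q'}| \lesssim \unit_Q(x)\, \bigl| [b,R_j]\bigl(s\,\unit_{Q'}\bigr)(x)\bigr| \cdot \ell(Q)
\end{equation*}
after absorbing the $R_j(s\unit_{Q'})$ term, which is itself essentially constant on $Q$ and can be handled by a further averaging/subtraction.

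From this pointwise inequality I would integrate against $\lambda$ over $Q$ and apply the operator norm bound:
\begin{equation*}
\int_Q |b - \langle b\rangle_{Q'}|^p\,\lambda\,dx \lesssim \ell(Q)^p \,\|[b,R_j]\|^p \int_{Q'} |s|^p\,\mu\,dx = \ell(Q)^p\,\|[b,R_j]\|^p\, \mu(Q').
\end{equation*}
The key point is then to convert the left side into the $BMO(\nu)$ quantity $\frac{1}{\nu(Q)}\int_Q |b - \langle b\rangle_Q|\,dx$. For this I would use Hölder's inequality in the form $\int_Q |b-\langle b\rangle_Q|\,dx \le \big(\int_Q |b - c|^p \lambda\,dx\big)^{1/p}\big(\int_Q \lambda^{-p'/p}\,dx\big)^{1/p'}$ with $c = \langle b\rangle_{Q'}$ (replacing $\langle b\rangle_Q$ by $\langle b\rangle_{Q'}$ costs only a constant after a standard telescoping/comparison of averages over adjacent cubes), and then invoke the $A_p$ condition on $\mu$ and $\lambda$: the product $\big(\int_Q \lambda^{-p'/p}\big)^{1/p'}\cdot \mu(Q')^{1/p}\cdot \ell(Q)$ is, after using $\mu(Q')\sim \mu(Q)$ (doubling, since $\mu\in A_p$) and the defining inequalities for $A_p$ weights, comparable to $\nu(Q) = \int_Q \mu^{1/p}\lambda^{-1/p}\,dx$ — this last comparison is the two-weight analogue of the identity used by Bloom, and follows from $[\mu]_{A_p}$, $[\lambda]_{A_p} < \infty$ together with Hölder. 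Taking the supremum over all $Q$ yields $\|b\|_{BMO(\nu)}\lesssim \sum_j \|[b,R_j]\|$.

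The main obstacle I anticipate is twofold. First, making the "absorb the $R_j(s\unit_{Q'})$ term" step rigorous: one cannot use the raw sign of $b - \langle b\rangle_{Q'}$ (it need not give a controllable $R_j$-image), so I expect to need a mollified/averaged choice of test function — either following the CRW trick of expanding $1/|x-y|^{n+1}$ in a Fourier series on a cube and extracting a single frequency, or using the median/good-$\lambda$ style selection so that $R_j(s\unit_{Q'})$ is genuinely close to a constant on $Q$ and its contribution can be subtracted into $\langle b\rangle_Q$. Second, the weighted bookkeeping: every averaging must be done against the correct measure, and the passage $\int_Q|b-\langle b\rangle_Q|\,dx \leftrightarrow \big(\int_Q |b - \langle b\rangle_{Q'}|^p\lambda\big)^{1/p}$ needs the $A_p$ estimates deployed carefully so that the emerging constant depends only on $n,p,[\mu]_{A_p},[\lambda]_{A_p}$; checking that $\nu$ is exactly the right weight to close the loop (rather than some other power combination) is where the argument is most delicate, and is precisely the subtlety the authors flag in the introduction.
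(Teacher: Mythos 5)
Your upper bound is exactly the paper's (Theorem \ref{T:UpperBound} applied to each $R_i$), and your conversion of $\bigl(\mu(Q)^{-1}\int_Q|b-\langle b\rangle_Q|^p\,d\lambda\bigr)^{1/p}$ into $\|b\|_{BMO(\nu)}$ via H\"older and the $A_p$ conditions is precisely the implication $(5)\Rightarrow(7)$ of Theorem \ref{T:BloomEquiv} (inequality \eqref{E:NuIneqs1}); so the weighted bookkeeping you worry about is already in place and $\nu$ is indeed the right weight. Where you genuinely diverge from the paper is the pointwise lower bound. The paper follows Coifman--Rochberg--Weiss literally: it expands via degree-$n$ spherical harmonics to get the exact identity \eqref{E:CRW}, which expresses $|Q|\,|b-\langle b\rangle_Q|\unit_Q$ in terms of commutators $[b,R^{(k)}]$ with \emph{polynomials} in the Riesz transforms applied to $y^\beta\unit_Q$; this requires the composition identity $[b,R_iR_j]=[b,R_i]R_j+R_i[b,R_j]$ together with the weighted boundedness of each $R_i$ on $L^p(\mu)$ and $L^p(\lambda)$, and (because the paper routes through the dyadic quantity $\mathbb{B}_1^{\mcd}$) a final passage from dyadic to continuous $BMO(\nu)$ using the $2^n$ shifted grids. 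Your separated-cubes/sign-selection route avoids both the composition step and the weighted Riesz bounds, and works directly for arbitrary cubes; it is a legitimate and in some ways cleaner alternative, provided you actually execute the median (or Fourier-expansion) selection you flag, since the naive choice $s=\mathrm{sgn}(b-\langle b\rangle_{Q'})$ does not by itself control the term $(b(x)-\langle b\rangle_{Q'})\int_{Q'}K_j(x,y)s(y)\,dy$. One concrete error to fix: on separated cubes of sidelength $l(Q)$ the Riesz kernel satisfies $|K_j(x,y)|\simeq l(Q)^{-n}=|Q|^{-1}$, not $|Q|^{-1/n}$, so after integrating over $Q'$ no factor of $l(Q)$ survives; the correct pointwise bound is $\unit_Q|b-\langle b\rangle_{Q'}|\lesssim \unit_Q\,|[b,R_j](s\unit_{Q'})|$, and correspondingly the quantity you must compare to $\nu(Q)$ is $\mu(Q')^{1/p}\lambda'(Q)^{1/q}$ \emph{without} the extra $l(Q)$ --- with that factor present, the comparison to $\nu(Q)$ you assert is false on dimensional grounds, and with it removed the argument closes exactly as in \eqref{E:NuIneqs1}.
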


In general two-weight inequalities are challenging, with complete characterizations for operators more difficult to obtain.  In the case of positive operators a complete characterization has been obtained by Sawyer in \cites{MR930072, MR676801}.  For operators with a more singular nature we point to the work of Nazarov, Treil and Volberg in \cite{MR2407233}, Lacey, Sawyer, Shen, and Uriarte-Tuero in \cite{MR3285857}, Lacey in \cite{MR3285858}, Lacey, Sawyer, Shen, Uriarte-Tuero and Wick in \cite{combined} and Lacey and Wick in \cite{LW}.  This points to further novelty in the main result since it obtains a characterization in terms of the triple $(b,\mu,\lambda)$ via a special $BMO$ space.

Similar to \cite{CRW}, the equivalence in Theorem \ref{T:LowerBound} yields a weak-factorization result for weighted Hardy spaces.  
%The following factorization result, similar to the one in \cite{CRW}, will follow almost immediately.

\begin{cor} \label{C:Factorization} Under the hypotheses and notation  of Theorems \ref{T:UpperBound} and \ref{T:LowerBound},  let $\lb' \defeq \lb^{1-q}$ and $ T$ be a Calder{\'o}n-Zygmund operator on $\R^n$.  We have the inequality 
$$\| g_1 (Tg_2) - (T^*g_1)g_2\|_{H^1(\nu)} \leq c \|g_1\|_{L^q(\lb')} \|g_2\|_{L^p(\mu)},$$
where $c$ is a constant depending on the dimension, the operator $T$, and on $\mu$, $\lb$, and $p$.
Conversely, there exists a constant $c$ so that every $f \in H^1(\nu)$ can be written as
	\begin{equation} \label{E:Decomposition}
	f(x) = \sum_{i=1}^n \sum_{j=1}^{\infty}  g_j^i(x) R_i h_j^i(x) + h_j^i(x) R_i g_j^i(x),
	\end{equation}
where $R_i$ is the Riesz transform in the $i$th variable, and $g_j^i \in L^q(\lb')$, $h_j^i \in L^p(\mu)$ with
	$$\sum_{i=1}^{n}\sum_{j=1}^{\infty} \|g_j^i\|_{L^q(\lb')} \|h_j^i\|_{L^p(\mu)} \leq c\|f\|_{H^1(\nu)}.$$
\end{cor}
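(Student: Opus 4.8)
The plan is to derive both assertions from two ingredients. The first is the weighted duality $\bigl(H^1(\nu)\bigr)^{*} = BMO(\nu)$, available because the Bloom weight $\nu = \mu^{1/p}\lb^{-1/p}$ lies in $A_2$ (a quantitative consequence of $\mu,\lb\in A_p$), so that the weighted Hardy space $H^1(\nu)$ admits a weighted atomic decomposition whose dual is the Bloom space $BMO(\nu)$. The second is the elementary identity
\begin{equation*}
\int_{\R^n} b\,\bigl(g_1\,(Tg_2) - (T^{*}g_1)\,g_2\bigr)\,dx = \int_{\R^n} g_1\,[b,T]\,g_2\,dx ,
\end{equation*}
obtained by writing $b\,Tg_2 = [b,T]g_2 + T(bg_2)$ and moving $T^{*}$ across the pairing (valid for $g_1,g_2$ in a dense class of bounded, compactly supported functions, using local integrability of $b$ by John--Nirenberg). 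For $T = R_i$ one has $R_i^{*} = -R_i$, so the bracket is the symmetric bilinear form $\Pi_i(g,h) \defeq g\,R_i h + h\,R_i g$ appearing in \eqref{E:Decomposition}. I also use the weighted $L^p$-dualities $\bigl(L^p(\lb)\bigr)^{*} = L^q(\lb')$ with $q = p'$, and $\bigl(L^q(\lb')\bigr)^{*} = L^p(\lb)$.

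For the direct inequality, fix $b \in BMO(\nu)$ with $\|b\|_{BMO(\nu)} \le 1$ and take $g_1,g_2$ in the dense class above. By the identity, Hölder's inequality in the pairing $\bigl(L^p(\lb),L^q(\lb')\bigr)$, and Theorem \ref{T:UpperBound},
\begin{equation*}
\Bigl|\int_{\R^n} b\,\bigl(g_1 (Tg_2) - (T^{*}g_1) g_2\bigr)\,dx\Bigr| \le \|g_1\|_{L^q(\lb')}\,\|[b,T]g_2\|_{L^p(\lb)} \le c\,\|g_1\|_{L^q(\lb')}\,\|g_2\|_{L^p(\mu)}.
\end{equation*}
Taking the supremum over such $b$ and invoking $\bigl(H^1(\nu)\bigr)^{*} = BMO(\nu)$ --- together with a routine truncation argument confirming that $g_1 (Tg_2) - (T^{*}g_1) g_2$ genuinely belongs to $H^1(\nu)$ rather than merely pairing boundedly against $BMO(\nu)$ --- gives the stated bound, and a density argument removes the restriction on $g_1,g_2$.

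For the converse I use the bipolar theorem. Let $\mathcal B \subset H^1(\nu)$ be the set of all $\Pi_i(g,h)$ with $1 \le i \le n$, $\|g\|_{L^q(\lb')} \le 1$, and $\|h\|_{L^p(\mu)} \le 1$; by the direct inequality $\mathcal B$ is bounded in $H^1(\nu)$. Using the identity together with the $L^p$-dualities, its polar in $\bigl(H^1(\nu)\bigr)^{*} = BMO(\nu)$ is
\begin{equation*}
\mathcal B^{\circ} = \Bigl\{ b \in BMO(\nu) \;:\; \max_{1 \le i \le n}\,\bigl\|[b,R_i] : L^p(\mu) \to L^p(\lb)\bigr\| \le 1 \Bigr\}.
\end{equation*}
The lower bound in Theorem \ref{T:LowerBound} gives $\max_i \|[b,R_i]\| \ge \tfrac{c}{n}\|b\|_{BMO(\nu)}$, so $\mathcal B^{\circ} \subseteq \{\,\|b\|_{BMO(\nu)} \le n/c\,\}$; taking polars again, the closed absolutely convex hull of $\mathcal B$ in $H^1(\nu)$ contains the ball $\{\,f : \|f\|_{H^1(\nu)} \le c/n\,\}$. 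Hence every $f$ with $\|f\|_{H^1(\nu)} \le c/n$ is approximated in $H^1(\nu)$ by finite combinations $\sum_k \lambda_k \Pi_{i_k}(g_k,h_k)$ with $\sum_k |\lambda_k| \le 1$; since $\Pi_i$ is linear in each slot, the standard Coifman--Rochberg--Weiss iteration (approximate, subtract, repeat on the geometrically smaller remainder) promotes this to a norm-convergent series $f = \sum_j \Pi_{i_j}(g_j,h_j)$ with $\sum_j \|g_j\|_{L^q(\lb')}\|h_j\|_{L^p(\mu)} \le c\,\|f\|_{H^1(\nu)}$, which is \eqref{E:Decomposition} after collecting terms according to the value of $i_j$.

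The main obstacle is securing the weighted $H^1$--$BMO$ duality in exactly the required form, namely $\bigl(H^1(\nu)\bigr)^{*} = BMO(\nu)$ with the Bloom normalization $\|b\|_{BMO(\nu)} = \sup_Q \nu(Q)^{-1}\int_Q |b - \langle b\rangle_Q|\,dx$: one must check $\nu \in A_2$ and then identify the dual of the weighted atomic Hardy space with this particular $BMO$ space (as opposed to the ``intrinsic'' weighted $BMO_\nu$). Once that input is in hand the argument is soft functional analysis. The remaining points --- that the bilinear building blocks are honest $H^1(\nu)$ functions, and the bookkeeping converting ``the closed convex hull is large'' into a series with summable $\sum_j \|g_j\|\,\|h_j\|$ --- are routine.
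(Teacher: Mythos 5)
Your argument is correct and follows essentially the same route as the paper: the weighted duality $(H^1(\nu))^*\equiv BMO^2(\nu)$ (equivalently $BMO(\nu)$ by Muckenhoupt--Wheeden), the identity $\int b\,(g_1 Tg_2-(T^*g_1)g_2)\,dx=\int g_1[b,T]g_2\,dx$ combined with Theorem \ref{T:UpperBound} for the direct inequality, and the lower bound of Theorem \ref{T:LowerBound} to drive the duality argument for the decomposition. The only difference is cosmetic: you spell out the bipolar-theorem-plus-iteration step that the paper delegates to the functional analysis argument cited from \cite{CLMS}, and you explicitly flag the membership $g_1(Tg_2)-(T^*g_1)g_2\in H^1(\nu)$ that the paper passes over with ``clearly''.
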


In the special case of the Hilbert transform, and $ p=2$, the paper \cite{HLW} gives a  `modern' proof of Bloom's result. 
We follow the outlines of that proof in the current setting, which has its genesis in \cite{MR1756958}.  Using a Haar shift representation of $ T$, 
a commutator can written out as a sum of several terms. Most of these are paraproducts,  with symbol $ b$, 
but there are error terms as well.  For the paraproducts, one needs a two-weight criterion of the boundedness. 
These criteria come in several different forms, but the additional structure of $ \mu , \lambda \in A_p$ 
forces these ostensibly different criteria to be jointly finite.  There are several error terms to handle. 
A comprehensive treatment of all terms depends upon an $ H ^{1}$-$BMO$ duality, which fortunately has 
already been developed.

Here is an outline of the paper.  Section \ref{s:NotationBackground} collects all the necessary background that will be used throughout the paper.   This includes background on $ A_p$ weights, and weighted $ H ^{1}$.  
Section \ref{s:Paraproducts} introduces the paraproduct operators of interest and proves they are bounded in terms of Bloom's $BMO$.  We are able to mimic certain unweighted proofs by using a duality statement for weighted $BMO$ spaces.  In Section \ref{s.EquivalenceBMO} we provide a family of equivalent conditions for a function to belong to the dyadic $BMO(\nu)$.  Some of these equivalences are more useful when obtaining lower bounds as in Theorem \ref{T:LowerBound}, while others are more important in the proof of the upper bound in Theorem \ref{T:UpperBound}. 
In particular, there is seemingly no canonical form of the definition of Bloom's $BMO$ space.  We have followed Bloom's presentation in the definition above, and find other forms of the definition more convenient at different points of the proof.  
We do not track $ A_p$ constants, since the sharp bound would depend upon the choice of norm for $ BMO (\nu )$; and we will freely use various equivalences throughout the proof below.   
Section \ref{s:UpperBound} contains the proof of Theorems \ref{T:UpperBound} and \ref{T:LowerBound} and Corollary \ref{C:Factorization}.  For the upper bound, we will use the Hyt\"onen Representation Theorem, \cite{HytRepOrig}, to decompose the Calder\'on--Zygmund operator into Haar shift operators.  Then we carefully analyze the commutator with each Haar shift to prove the desired statement in Theorem \ref{T:UpperBound}.  A similar proof strategy can be found in \cite{DalencOu}.  For the lower bound, we follow the original proof of Coifman, Rochberg, and Weiss, \cite{CRW}, but with suitable modifications.  The proof of Corollary \ref{C:Factorization} is then a standard application of well-known techniques.

%----------------------NOTATION AND BACKGROUND-------------------------------%

\section{Notation and Background}
\label{s:NotationBackground}

Throughout this paper, we use the standard notation ``$A\lesssim B$'' to denote $A \leq cB$ for some constant $c$ that depends only on the dimension $n$ and, in the case of a weighted inequality, on $p$ and the $ A_p$ constants of $\mu$, and $\lb$.   And, ``$A\approx B$'' means that $A\lesssim B$ and $B\lesssim A$. We let ``$\defeq$'' mean equal by definition.

%----------Dyadic Grids---------%

\subsection{Dyadic Grids} Recall the standard dyadic grid on $\R^n$:
	$$\mcd^0 \defeq \left\{ 2^{-k} \left([0,1)^n + m\right): k\in\mathbb{Z}; m \in\mathbb{Z}^n \right\}.$$
For every $\omega=(\omega_j)_{j\in\mathbb{Z}} \in (\{0,1\}^n)^{\mathbb{Z}}$ we may translate $\mcd^0$ by letting
	$$\mcd^{\omega} \defeq \left\{ Q \stackrel{\cdot}{+} \omega : Q\in\mcd^0 \right\},$$
where
	$$Q \stackrel{\cdot}{+} \omega \defeq Q + \sum_{j: 2^{-j}<l(Q)} 2^{-j}\omega_j.$$
Here $l(Q)$ will denote the side length of a cube $Q$ in $\R^n$. We will only need to pay attention to $\omega$ when dealing with $\mathbb{E}_{\omega}$, which denotes expectation with respect to the standard probability measure on the set of parameters $\omega$. We denote a generic dyadic grid $\mcd^{\omega}$ on $\R^n$ by $\mcd$.

Any such $\mcd$ has the standard nestedness properties:
	\begin{itemize}
	\item For every $P, Q \in \mcd$, $P \cap Q$ is one of $P$, $Q$, and $\emptyset$.
	\item All $Q\in\mcd$ with $l(Q) = 2^{-k}$ for some fixed $k\in\mathbb{Z}$ partition $\R^n$.
	\end{itemize}
For every $Q\in\mcd$ and every non-negative integer $k$, we denote:
	\begin{itemize}
	\item $Q^{(k)}$: the $k^{\text{th}}$ generation ancestor of $Q$ in $\mcd$, i.e. the unique element of $\mcd$ that contains $Q$ and has side length $2^k l(Q)$.
	\item $Q_{(k)}$: the collection of $k^{\text{th}}$ generation descendants of $Q$ in $\mcd$, i.e. the $2^{kn}$ disjoint subcubes of $Q$ in $\mcd$ with side length $2^{-k}l(Q)$.
	\end{itemize}

%---------Haar Functions----------%

\subsection{The Haar System} Recall that every dyadic interval $I \subset \R$ is associated with two Haar functions:
	$$h_I^0 \defeq \frac{1}{\sqrt{|I|}} \left(\unit_{I_{-}} - \unit_{I_{+}}\right) \text{, and } h_I^1 \defeq \frac{1}{\sqrt{|I|}} \unit_I.$$
Note that $h_I^0$ is cancellative, while $h_I^1$ is non-cancellative. The cancellative Haar functions associated to a dyadic system on $\R$ form an orthonormal basis for $L^2(\R)$. 

More generally, let $Q = Q_1 \times \cdots \times Q_n$ be a dyadic cube in $\R^n$ -- here all $Q_i$ are dyadic intervals in $\R$ with common length $l(Q)$. Then $Q$ is associated with $2^n$ Haar functions:
	$$h_Q^{\epsilon}(x) \defeq h_{Q_1 \times \cdots \times Q_n}^{(\ep_1, \ldots, \ep_n)} (x_1, \ldots, x_n) 
		\defeq \prod_{i=1}^n h_{Q_i}^{\epsilon_i}(x_i),$$
where $\ep=(\epsilon_1,\ldots, \epsilon_n) \in \{0, 1\}^n$  is called the \textit{signature} of $h_Q^{\ep}$. We write $\ep\equiv 1$ when $\ep_i = 1$ for all $i$; in this case,
	$$h_Q^1 \defeq \frac{1}{\sqrt{|Q|}}\unit_Q$$
is non-cancellative. All the other $2^n - 1$ Haar functions $h_Q^{\ep}$ with $\ep \not\equiv 1$ associated with $Q$ are cancellative. Moreover, as in the one-dimensional case, all the cancellative Haar functions associated with a dyadic grid $\mcd$ on $\R^n$ form an orthonormal basis for $L^2(\R^n)$. In other words, every $f \in L^2(\R^n)$ has the expansion:
	$$f = \sum_{Q \in \mcd, \ep\not\equiv 1} \widehat{f}(Q, \ep) h_Q^{\ep},$$
where $ \widehat{f}(Q, \ep) \defeq \left< f, h_Q^{\ep}\right>$. 
Throughout this paper we use $\left<\cdot, \cdot\right>$ to denote the usual inner product on $L^2(\R^n)$. 

We make a few simple but useful observations about Haar functions. First, note that $h_Q^{\ep}$ is constant on any subcube $P \subsetneq Q$ of $Q$ in $\mcd$; we denote this value by $h_Q^{\ep}(P)$. Then for any integer $k \geq 1$, we can express $h_Q^{\ep}$ as:
	$$h_Q^{\ep} = \sum_{P \in Q_{(k)}} h_Q^{\ep}(P) \unit_P.$$
Second, a simple calculation shows that:
	$$h_Q^{\ep} h_Q^{\eta} = \frac{1}{\sqrt{|Q|}} h_Q^{\ep+\eta},$$
where for $\ep, \eta \in \{0, 1\}^n$ we define $\ep+\eta \in \{0, 1\}^n$ as:
	\begin{equation} \label{E:epadddef}
	(\ep+\eta)_i \defeq \delta_{(\ep_i, \eta_i)} 
		=\left\{ \begin{array}{ll} 
			0, & \text{if } \ep_i \neq \eta_i\\
			1, & \text{if } \ep_i = \eta_i.
			\end{array}\right.
	\end{equation}
It is easy to see from this definition that $\ep + \eta \equiv 1$ if and only if $\ep=\eta$, and $\ep + \eta = \ep$ if and only if $\eta\equiv 1$.
	
Third, we note that the average of a function $f$ over a dyadic cube $Q$:
	$$\left<f\right>_Q \defeq \frac{1}{|Q|} \int_Q f \,dx,$$
can be expressed as:
	$$\left<f\right>_Q = \sum_{\substack{P \in \mcd, P \supsetneq Q\\ \ep\not\equiv 1}} \widehat{f}(P, \ep) h_P^{\ep}(Q).$$
In turn, this yields the following useful expression:
	\begin{equation} \label{E:AvgDifference}
	\left<f\right>_Q - \left<f\right>_{Q^{(i)}} = \sum_{\substack{P,Q \in \mcd, \ep\not\equiv 1\\ Q \subsetneq P \subset Q^{(i)}}} \widehat{f}(P, \ep) h_P^{\ep}(Q) = \sum_{\substack{1\leq k\leq i\\ \ep\not\equiv 1}} \widehat{f}(Q^{(k)}, \ep) h_{Q^{(k)}}^{\ep}(Q),
	\end{equation}
which we shall use later in the proof of our main result.
	
%----------Weights-------%

\subsection{$A_p$ Weights} 
\label{ss:apweights}
Let $w$ be a weight on $\R^n$, i.e. $w$ is an almost everywhere positive, locally integrable function. For $1 < p < \infty$, let $L^p(w) \defeq L^p(\R^n; w)$ be the space of functions $f$ that satisfy:
	$$\|f\|_{L^p(w)} \defeq \left(\int_{\R^n} |f(x)|^p\,dw(x)\right)^{\frac{1}{p}} < \infty,$$
where we also use $w$ to denote the measure $w(x)\,dx$. For a cube $Q$ in $\R^n$, we let 
	$$w(Q) \defeq \int_Q w(x)\,dx \text{ and } \left<w\right>_Q \defeq \frac{w(Q)}{|Q|}.$$
We say that $w$ belongs to the Muckenhoupt class of $A_p$ weights for some $1 < p < \infty$ provided that:
	$$[w]_{A_p} \defeq \sup_{Q} \left<w\right>_Q \left<w^{1-q}\right>_Q^{p-1} < \infty,$$
where $q$ denotes the H\"older conjugate of $p$ and the supremum above is over all cubes $Q$ in $\R^n$ with sides parallel to the axes. The quantity $[w]_{A_p}$ is called the $A_p$ (Muckenhoupt) characteristic of $w$. 

If $w \in A_p$, then the `conjugate' weight 
\begin{equation}\label{e:conjugate}
w' \defeq w^{1-q} \in A_q,
\end{equation} 
with $A_q$ characteristic $[w']_{A_q} = [w]_{A_p}^{q-1}$. In other words:
	\begin{equation} \label{E:ApWeightIneqs}
	1 \leq \left<w\right>_Q\left<w'\right>_Q^{p-1} \leq [w]_{A_p} \:\:\text{ and }\:\: 1 \leq \left<w'\right>_Q\left<w\right>_Q^{q-1} \leq [w]_{A_p}^{q-1},
	\end{equation}
for all $w\in A_p$ and all cubes $Q$ in $\R^n$. We shall make much use of the duality relationship:
	\begin{equation} \label{E:ApDualSpace}
	\left(L^p(w)\right)^* \equiv L^q(w') \text{, with pairing } \left< f, g\right> \text{, for all } f \in L^p(w), g\in L^q(w'),
	\end{equation}
and 
	$$\|f\|_{L^p(w)} = \sup_{\substack{g\in L^q(w')\\ \|g\|_{L^q(w')} \leq 1}} |\La f, g\Ra|.$$
The case $p=2$ is particularly easy to work with, as $w' = w^{-1}$ for $w\in A_2$.

A crucial property of $A_p$ weights that we shall use repeatedly is the $L^p(w)$-boundedness of the maximal function:
	$$Mf \defeq \sup_{Q \text{ cubes in } \R^n} \left(\left<|f|\right>_Q \unit_Q\right).$$
Muckenhoupt  \cite{MR0293384} showed that 
	\begin{equation}\label{E:MaxFWeight}
	\|Mf\|_{L^p(w)} \lesssim  \|f\|_{L^p(w)}.
	\end{equation}
The sharp behavior in terms of the $A_p$ characteristic of the maximal function on $L^p(w)$ was obtained by Buckley in \cite{MR1124164}.

Another pivotal development in $A_p$ weight theory was the Extrapolation Theorem - see \cite{Extrapolation} - which, in particular, allows one to deduce the $L^p(w)$-boundedness of an operator for all $w\in A_p$ solely from its $L^2(w)$-boundedness for all $w\in A_2$. This is an extremely useful tool because, as we shall see, $L^2$-estimates for $A_2$ weights are usually much `easier' than $L^p$-estimates for $A_p$ weights. 

As mentioned in the Introduction, we are not explicitly tracking the dependence upon the weights $\mu$ and $\lambda$ in terms of the $A_p$ characteristic since we will freely use various equivalences between various norms in the proof below.  However, certain constants need to be tracked as they play a role in the final analysis carried out later (see for instance Lemma \ref{L:ShiftedDSF} and estimate \eqref{E:SijUBd}).  The main tool we shall use is the following form of the Extrapolation Theorem:
	
	\begin{thm}\label{T:Extrapolation}
	Suppose an operator $T$ satisfies:
		$$\|Tf\|_{L^2(w)} \leq A  C (w)  \|f\|_{L^2(w)}$$
	for all $w\in A_2$, for some fixed $A> 0$. Then:
		$$\|Tf\|_{L^p(w)} \le A  C (w, p)\|f\|_{A_p}$$
	for all $1<p<\infty$ and all $w\in A_p$.
	\end{thm}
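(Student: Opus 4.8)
The plan is to deduce this quantitative, norm-tracked form of Rubio de Francia's extrapolation theorem from the classical statement by paying attention to exactly where the constant $A$ enters. The heart of extrapolation is the Rubio de Francia algorithm: given $w \in A_p$ and $1 < p < \infty$, one builds, for a suitable nonnegative $g \in L^{p}$ (respectively $L^{p'}$), the function $\mathcal{R}g \defeq \sum_{k=0}^{\infty} 2^{-k} \frac{M^{k}g}{\|M\|_{L^{p}(w)}^{k}}$, where $M^k$ is the $k$-fold iterate of the maximal operator $M$. By construction $\mathcal{R}g \ge g$ pointwise, $\|\mathcal{R}g\|_{L^{p}(w)} \le 2\|g\|_{L^{p}(w)}$, and $\mathcal{R}g \in A_1$ with $[\mathcal{R}g]_{A_1} \lesssim \|M\|_{L^{p}(w)}$. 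The key algebraic fact is that if one sets $W \defeq (\mathcal{R}_1 g_1)^{1-p}(\mathcal{R}_2 g_2)$ — with $\mathcal{R}_1$ built using $w$ on $L^{p'}(w^{1-p'})$ and $\mathcal{R}_2$ built using $w$ on $L^{p}(w)$ — then $W \in A_2$ with $[W]_{A_2}$ controlled by a power of $[w]_{A_p}$ times the relevant maximal-operator norms, all of which are bounded by powers of $[w]_{A_p}$ via Buckley's theorem.

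First I would recall these three properties of $\mathcal{R}$ and record the $A_2$ estimate for $W$ in terms of $[w]_{A_p}$. Next, the extrapolation inequality proper: for $f \in L^{p}(w)$, write $\|Tf\|_{L^{p}(\lambda)}$ by duality against some $g \in L^{p'}(w^{1-p'})$ with unit norm, insert the pointwise lower bound $|g| \le \mathcal{R}_1 g$ and $|f| \le \mathcal{R}_2 f$ cleverly (following the standard off-diagonal bookkeeping that converts the $L^p$ pairing into an $L^2(W)$ pairing), apply the hypothesized $L^2(W)$ bound $\|T\cdot\|_{L^2(W)} \le A\,C(W)\|\cdot\|_{L^2(W)}$, and then unwind using Hölder's inequality and the norm bounds $\|\mathcal{R}_i\cdot\| \le 2\|\cdot\|$. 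Because $C(W) = C([W]_{A_2})$ and $[W]_{A_2}$ is a function of $[w]_{A_p}$ and $p$ alone, the emerging constant has exactly the advertised form $A\cdot C(w,p)$: it is linear in $A$, with all remaining dependence packaged into a constant determined by $[w]_{A_p}$ and $p$.

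The main obstacle — really the only subtle point — is the bookkeeping in the duality step that produces an honest $L^2(W)$ pairing with the correct weight $W$, together with checking that $W \in A_2$ with the right quantitative control; this is where the interplay between the exponents $1-p$, $1-p'$, and the two copies of the Rubio de Francia operator must be handled carefully. Everything else is a matter of invoking Muckenhoupt's maximal function bound \eqref{E:MaxFWeight}, Buckley's sharp estimate, and Hölder's inequality. I would also remark that the linearity of $T$ is not essential for the argument — sublinearity suffices — but since our applications are to linear (or at worst sublinear) operators I would simply state it for linear $T$ as in the theorem. Finally, I note there is an evident typo in the conclusion as stated ($\|f\|_{A_p}$ should be $\|f\|_{L^p(w)}$), which the proof makes clear.
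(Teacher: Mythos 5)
The paper does not actually prove Theorem \ref{T:Extrapolation}: it is quoted as a known result and attributed to \cite{Extrapolation}, so there is no internal proof to compare against. Your outline is the standard Rubio de Francia argument, which is precisely how the cited source (and the literature generally) establishes it, so you are on the canonical route; you are also right that $\|f\|_{A_p}$ in the conclusion is a typo for $\|f\|_{L^p(w)}$. The three properties of the algorithm $\mathcal{R}$ that you list are correct, and the overall strategy (majorize by $A_1$ weights, assemble an $A_2$ weight $W$ with $[W]_{A_2}$ controlled by $[w]_{A_p}$ and $p$, apply the hypothesis at $L^2(W)$, unwind by H\"older) is exactly what is needed to see that the constant comes out as $A\cdot C(w,p)$ with $A$ entering linearly.

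That said, what you have written is a plan rather than a proof, and the one step you defer --- the construction and verification of $W$ --- is the entire content of the theorem, and the candidate you wrote down does not work as stated for all $p$. A weight of the form $u\,v^{-1}$ with $u,v\in A_1$ lies in $A_2$, and $u^{\theta}\in A_1$ only for $0\le\theta\le1$; hence $(\mathcal{R}_1g_1)^{1-p}(\mathcal{R}_2g_2)=\bigl((\mathcal{R}_1g_1)^{p-1}\bigr)^{-1}\mathcal{R}_2g_2$ is guaranteed to be in $A_2$ only when $p-1\le 1$, i.e.\ $p\le 2$. For $p>2$ the standard argument is different in form: one writes $\|Tf\|_{L^p(w)}^2=\sup_h\int|Tf|^2hw\,dx$ over $h\ge0$ with $\|h\|_{L^{(p/2)'}(w)}=1$, runs the algorithm on $h$ using the maximal operator adapted to the measure $w\,dx$ (whose $L^{(p/2)'}(w)$ bound again comes from \eqref{E:MaxFWeight} and Buckley), and checks that $(\mathcal{R}h)\,w\in A_2$. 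So a complete proof must either split into the cases $p<2$ and $p>2$ with two distinct weight constructions, or use the unified two-parameter construction with exponents chosen as functions of $p$ --- in either case with an explicit verification of the $A_2$ bound for $W$ in terms of $[w]_{A_p}$, which is the point your sketch leaves open.
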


%-----Square Functions-----%

\subsection{Dyadic Square Functions} Given a dyadic grid $\mcd$ on $\R^n$, the dyadic square function $S_{\mcd}$ is defined by:
	$$S_{\mcd}f \defeq \left[\, \sum_{Q\in\mcd,\ep\neq 1} |\widehat{f}(Q,\ep)|^2 \frac{\unit_Q}{|Q|} \right]^{\frac{1}{2}}.$$
A crucial property of this operator is the equivalence of norms
\begin{equation}\label{e:SQ}
\|f\|_{L^p(w)} \simeq \|S_{\mcd}f\|_{L^p(w)},
\end{equation}
for $w\in A_p$, $1<p<\infty$.  Sharp behavior of the square function in terms of the $A_p$ characteristic can be found in \cite{MR2200743}.

We will also need the following weighted estimate for a shifted square function:

%-------------Shifted Square Function lemma---------------%

\begin{lm} \label{L:ShiftedDSF}
For a dyadic grid $\mcd$ on $\R^n$ and a pair $(i, j)$ of non-negative integers, define:
	\begin{equation} \label{E:ShiftedDSFDef}
	\widetilde{S_{\mcd}}^{i,j} f \defeq \left (\sum_{Q\in\mcd, \ep\not\equiv 1} \left(\sum_{P \in (Q^{(j)})_{(i)}} |\widehat{f}(P,\ep)| \right)^2 
		\frac{\unit_Q}{|Q|}\right)^{\frac{1}{2}}.
	\end{equation}
Then for every weight $w\in A_p$, with $1 < p < \infty$:
	\begin{equation} \label{E:ShiftedDSFUB}
	\left\| \widetilde{S_{\mcd}}^{i, j} : L^p(w) \rightarrow L^p(w)\right\|
		\lesssim 2^{\frac{n}{2}(i+j)}.
	\end{equation}
\end{lm}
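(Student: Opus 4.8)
The plan is to reduce the $L^p(w)$ bound to the case $p=2$ via the Extrapolation Theorem (Theorem \ref{T:Extrapolation}), and then to control $\widetilde{S_{\mcd}}^{i,j}f$ in $L^2(w)$ by comparison with the ordinary weighted square function $S_{\mcd}$, whose boundedness is \eqref{e:SQ}. Since the stated bound $2^{\frac n2(i+j)}$ is exactly of the form $A\cdot C(w)$ with $A = 2^{\frac n2(i+j)}$, it suffices to prove
$$
\|\widetilde{S_{\mcd}}^{i,j}f\|_{L^2(w)} \lesssim 2^{\frac n2(i+j)}\,\|f\|_{L^2(w)}
$$
for all $w\in A_2$, with implied constant depending only on $n$ and $[w]_{A_2}$.

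First I would unwind the definition. Fix $Q\in\mcd$ and $\ep\not\equiv 1$; set $R \defeq Q^{(j)}$, the $j$th ancestor of $Q$, so that the inner sum runs over $P\in R_{(i)}$, the $2^{ni}$ dyadic descendants of $R$ of the $i$th generation. By Cauchy--Schwarz applied to the inner sum over the $2^{ni}$ cubes $P$,
$$
\left(\sum_{P\in R_{(i)}} |\widehat f(P,\ep)|\right)^2 \le 2^{ni}\sum_{P\in R_{(i)}} |\widehat f(P,\ep)|^2 .
$$
Therefore, writing $\delta \defeq \widetilde{S_{\mcd}}^{i,j}f$,
$$
\delta(x)^2 = \sum_{Q\in\mcd,\ep\not\equiv 1}\left(\sum_{P\in(Q^{(j)})_{(i)}}|\widehat f(P,\ep)|\right)^2\frac{\unit_Q(x)}{|Q|}
\le 2^{ni}\sum_{Q\in\mcd,\ep\not\equiv 1}\ \sum_{P\in(Q^{(j)})_{(i)}}|\widehat f(P,\ep)|^2\ \frac{\unit_Q(x)}{|Q|}.
$$
Now integrate against $w$: since $\unit_Q\le \unit_{Q^{(j)}}$ for each term and $|Q^{(j)}| = 2^{nj}|Q|$, we get $w(Q)/|Q| = \langle w\rangle_Q\,|Q|^{?}$ ... more precisely $\int \unit_Q\,dw = w(Q)$. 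The key reorganization is to sum first over the ``source'' cube $P$. A given $P\in\mcd$ with signature $\ep$ contributes to the term indexed by $(Q,\ep)$ precisely when $P\in (Q^{(j)})_{(i)}$, i.e. when $P$ has generation $i$ below $Q^{(j)}$, which (for fixed relative positions) forces $l(P) = 2^{j-i}l(Q)$ and $P\subset Q^{(j)}$; the number of such $Q$ for a given $P$ is at most $2^{nj}$ (the cubes $Q$ with $Q^{(j)} = P^{(i-j)\vee 0}\ldots$), and each such $Q$ satisfies $Q\subset P^{(\cdot)}$, some common ancestor. Rather than track this combinatorics too finely, I would bound $w(Q)\le w(\widehat P)$ where $\widehat P$ is the smallest dyadic cube containing both $P$ and all relevant $Q$, namely $\widehat P = P^{(i)}$ if $i\ge j$ and $= P^{(j)}$ otherwise — in all cases $\widehat P$ has side length $2^{\max(i,j)}l(P)\le 2^{i+j}l(P)$, so $|\widehat P|\le 2^{n(i+j)}|P|$. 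Using the $A_2$ condition, $\langle w\rangle_{\widehat P}\lesssim \langle w\rangle_P$ is false in general, but the reverse-type estimate we need is $w(\widehat P)/|Q| $: combining $w(Q)\le w(\widehat P)$ and $|Q|\ge 2^{-n(i+j)}|\widehat P|$ gives
$$
\frac{w(Q)}{|Q|}\le 2^{n(i+j)}\,\frac{w(\widehat P)}{|\widehat P|} = 2^{n(i+j)}\langle w\rangle_{\widehat P}.
$$
The quantity $\langle w\rangle_{\widehat P}$ is comparable to $\langle w\rangle_P$ up to a factor controlled by $[w]_{A_2}$ and the number of doublings, which is $\lesssim [w]_{A_2}^{\,c(i+j)}$ — and here I would \emph{not} try to be sharp in the weight, only in the dyadic parameters, so I absorb that into $C(w)$. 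Summing over $P$ then produces, up to these weight-dependent constants and a clean power $2^{O(n(i+j))}$,
$$
\int \delta^2\,dw \ \lesssim\ 2^{O(n(i+j))}\sum_{P\in\mcd,\ep\not\equiv1}|\widehat f(P,\ep)|^2\,\langle w\rangle_P = 2^{O(n(i+j))}\int (S_{\mcd}f)^2\,dw \ \lesssim\ 2^{O(n(i+j))}\|f\|_{L^2(w)}^2,
$$
using \eqref{e:SQ} at $p=2$.

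The one point of genuine care — and the main obstacle — is getting the \emph{exact} exponent $2^{\frac n2(i+j)}$ rather than a cruder $2^{O(n(i+j))}$, since the final analysis (as the authors flag, via \eqref{E:SijUBd}) needs this precise power. The losses above come from two places: the Cauchy--Schwarz step costs $2^{ni}$ in $\delta^2$, i.e. $2^{\frac n2 i}$ in $\delta$; and the geometric step comparing $w(Q)/|Q|$ to $\langle w\rangle_P$ ostensibly costs another $2^{n(i+j)}$ in $\delta^2$. To recover the sharp power one must avoid double-counting: for fixed $P$ and $\ep$, the cubes $Q$ with $P\in(Q^{(j)})_{(i)}$ are genuinely $2^{nj}$ in number but they \emph{partition} a single dyadic cube $\widetilde Q$ of side length $2^j l(Q) = 2^{j}\cdot 2^{i-j}l(P) = 2^i l(P)$ (when $i\ge j$), so $\sum_{Q}\unit_Q = \unit_{\widetilde Q}$ and $\sum_Q w(Q)/|Q| = 2^{nj}\cdot w(\widetilde Q)/|\widetilde Q|\cdot$(correction), which collapses the $2^{nj}$ count against the $|Q| = 2^{-ni}|P|$ denominator to leave exactly the factor $2^{ni}$; combined with the Cauchy--Schwarz $2^{ni}$ this gives $2^{2ni}$ in $\delta^2$ hence $2^{ni}$ in $\delta$ — and symmetrizing the roles of $i$ and $j$ (the shift $Q\mapsto Q^{(j)}$ then down $i$ generations, versus bookkeeping from $P$'s side) yields the balanced $2^{\frac n2(i+j)}$. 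Concretely I would: (i) apply extrapolation to pass to $w\in A_2$; (ii) apply Cauchy--Schwarz to the inner sum, paying $2^{ni}$; (iii) integrate, swap the order of summation to index by $P$, use that for each $P,\ep$ the relevant $Q$'s are disjoint with total measure a single cube, and use $A_2$-doubling to compare averages, collecting the dyadic factors carefully; (iv) recognize the resulting sum as $\|S_{\mcd}f\|_{L^2(w)}^2$ and conclude via \eqref{e:SQ}. The bookkeeping in step (iii) is where all the real work lies; everything else is routine.
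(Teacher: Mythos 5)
Your overall skeleton — extrapolate to $A_2$, reindex over $R=Q^{(j)}$ so that $\sum_{Q\in R_{(j)}}\La w\Ra_Q=2^{nj}\La w\Ra_R$, apply Cauchy--Schwarz to the inner sum, and compare with $\|S_{\mcd}f\|_{L^2(w)}$ — is the same as the paper's, and the first two steps are fine. The gap is exactly where you say "all the real work lies," and your proposed bookkeeping does not close it. After the \emph{unweighted} Cauchy--Schwarz you are left with
$2^{n(i+j)}\sum_{R,\ep}\sum_{P\in R_{(i)}}|\widehat f(P,\ep)|^2\La w\Ra_R$,
and to recognize this as $2^{n(i+j)}\|S_{\mcd}f\|_{L^2(w)}^2$ you must replace $\La w\Ra_{P^{(i)}}$ by $\La w\Ra_P$ uniformly in $P$. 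That comparison is false for a general $A_2$ weight with a constant independent of $i$: the doubling inequality only gives $\La w\Ra_{P^{(i)}}\leq [w]_{A_2}\,2^{ni}\La w\Ra_P$, and the ratio genuinely can grow exponentially in $i$ (e.g.\ for power weights), so you pick up an extra uncontrolled factor of order $2^{ni}$ inside $\delta^2$. This is fatal for the application: the exponent $2^{\frac n2(i+j)}$ must exactly cancel the decay $2^{-\frac n2(i+j)}$ of the shift coefficients, and even a loss of $[w]_{A_2}^{c}$ \emph{per generation} makes the sum over complexities in \eqref{E:UBP1} diverge. Your closing remarks about "avoiding double-counting" and "symmetrizing the roles of $i$ and $j$" do not constitute an argument — the operator is not symmetric in $(i,j)$, it is sublinear so there is no adjoint to dualize against, and no interpolation between a $2^{ni}$ and a $2^{nj}$ bound is set up.

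The missing idea is to put the weight \emph{inside} the Cauchy--Schwarz. The paper writes
$\sum_{P\in R_{(i)}}|\widehat f(P,\ep)| = \sum_P |\widehat f(P,\ep)|\,\La w^{-1}\Ra_P^{-1/2}\cdot\La w^{-1}\Ra_P^{1/2}$,
so that Cauchy--Schwarz yields
\begin{equation*}
\Bigl(\sum_{P\in R_{(i)}}|\widehat f(P,\ep)|\Bigr)^2\leq \Bigl(\sum_{P\in R_{(i)}}|\widehat f(P,\ep)|^2\frac{1}{\La w^{-1}\Ra_P}\Bigr)\cdot 2^{ni}\La w^{-1}\Ra_R ,
\end{equation*}
using that $\sum_{P\in R_{(i)}}\La w^{-1}\Ra_P=2^{ni}\La w^{-1}\Ra_R$ exactly. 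Multiplying by $2^{nj}\La w\Ra_R$, the product $\La w^{-1}\Ra_R\La w\Ra_R\leq[w]_{A_2}$ is handled at the single scale $R$, and $\La w^{-1}\Ra_P^{-1}\leq\La w\Ra_P$ is handled at the single scale $P$; no comparison of averages across $i$ generations is ever needed, and the dyadic factor is exactly $2^{n(i+j)}$ in $\delta^2$. I recommend you redo step (iii) with this weighted Cauchy--Schwarz; the rest of your write-up then goes through.
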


\noindent Remark that for $i=j=0$, this is just the usual dyadic square function $S_{\mcd}$.

\begin{proof}
In light of the Extrapolation Theorem \ref{T:Extrapolation}, it suffices to prove an upper bound for all $A_2$ weights $w$. So let $w \in A_2$ and note that
	\begin{align*}
	\|\widetilde{S_{\mcd}}^{i,j}f\|^2_{L^2(w)} & = \sum_{Q\in\mcd, \ep\not\equiv 1} 
			\left(\sum_{P \in (Q^{(j)})_{(i)}} |\widehat{f}(P,\ep)|\right)^2 \La w\Ra_Q  \\
		& = \sum_{R \in \mcd, \ep\not\equiv 1} \left( \sum_{P \in R_{(i)}} |\widehat{f}(P,\ep)|\right)^2 
			\sum_{Q\in R_{(j)}} \La w\Ra_Q.
	\end{align*}
Now
	\begin{align*}
	\sum_{P \in R_{(i)}} |\widehat{f}(P,\ep)| & = \sum_{P \in R_{(i)}} |\widehat{f}(P,\ep)| \frac{\La w^{-1}\Ra_P^{\frac{1}{2}}}{\La w^{-1}\Ra_P^{\frac{1}{2}}} 
			\\
	& \leq \left( \sum_{P \in R_{(i)}} |\widehat{f}(P,\ep)|^2 \frac{1}{\La w^{-1}\Ra_P}
	 \times \sum_{P\in R_{(i)}} \La w^{-1}\Ra_P \right)^{\frac{1}{2}},
	\end{align*}
so, appealing to the square function bound \eqref{e:SQ}, 
	\begin{align*}
	\|\widetilde{S_{\mcd}}^{i,j}f\|^2_{L^2(w)} & \leq 2^{n(i+j)} \sum_{R\in\mcd,\ep\not\equiv 1} 
		 \sum_{P \in R_{(i)}} |\widehat{f}(P,\ep)|^2 \frac{1}{\La w^{-1}\Ra_P}
		\La w^{-1}\Ra_R \La w\Ra_R & \\
	& \lesssim 2^{n(i+j)}  \sum_{P\in\mcd,\ep\not\equiv 1} |\widehat{f}(P,\ep)|^2 \frac{1}{\La w^{-1}\Ra_P} 
	\lesssim 2^{n(i+j)}   \|f\|_{L^2(w)}^2.
	\end{align*}
\end{proof}
%---------------end of Shifted Square Function lemma--------------%

%----------Dyadic Shifts------%

\subsection{Hyt\"onen's Representation Theorem} Fix a dyadic grid $\mcd^{\omega}$ on $\R^n$. For every pair $i,j$ of non-negative integers, a dyadic shift operator with parameters $(i,j)$ is an operator of the form:
	$$\mbs_{\omega}^{ij}f \defeq \sum_{\substack{R\in\mcd\\ \ep,\eta\in\{0,1\}^n}} \sum_{\substack{P \in R_{(i)}\\Q \in R_{(j)}}} a^{\ep\eta}_{PQR}
		\widehat{f}(P, \ep) h_Q^{\eta},$$
where $a^{\ep\eta}_{PQR}$ are coefficients with
	$$|a^{\ep\eta}_{PQR}| \leq \frac{\sqrt{|P||Q|}}{|R|} = 2^{-\frac{n}{2}(i+j)}.$$
The operator $\mbs_{\omega}^{ij}$ is called cancellative if all Haar functions appearing in its definition are cancellative. Otherwise, $\mbs_{\omega}^{ij}$ is called non-cancellative. 
The parameters $ \kappa =(i,j)$ are a measure of the \emph{complexity} of the shift.  As is well-known, 
the dependence of norm estimates upon complexity must be tracked, but is only linear in 
\begin{equation}\label{e:kappa}
\kappa_{ij} \defeq \max(i, j, 1), 
\end{equation}
whereas there is exponential decay in $ \kappa $, in the 
celebrated representation theorem of Hyt{\"o}nen \cites{HytRepOrig, HytRep, HytPerezTV}:

\begin{thm} \label{T:HytRep}
Let $T$ be a Calder{\'o}n-Zygmund operator associated with a $\delta$-standard kernel. Then there exist dyadic shift operators $\mbs_{\omega}^{ij}$ with parameters $(i, j)$ for all non-negative integers $i, j$ such that
	$$\La Tf, g\Ra = c\: \mathbb{E}_{\omega} \sum_{i,j=0}^{\infty} 2^{- \kappa _{i,j}\frac{\delta}{2}} \La \mbs_{\omega}^{ij}f, g\Ra,$$
for all bounded, compactly supported functions $f$ and $g$, where $c$ is a constant depending on the dimension $n$ and on $T$. Here all $\mbs^{ij}_{\omega}$ with $(i,j) \neq (0,0)$ are cancellative, but the shifts $\mbs_{\omega}^{00}$ may be non-cancellative.
\end{thm}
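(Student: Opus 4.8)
This is the dyadic representation theorem of Hyt\"onen, so in practice one simply cites \cites{HytRepOrig, HytRep, HytPerezTV}; here is the architecture of a proof. \emph{Step 1: random grids and the good/bad dichotomy.} On the probability space of translation parameters $\omega$ described above, fix $\gamma\in(0,1)$ and $r\in\N$, and call a cube $Q\in\mcd^{\omega}$ \emph{bad} if there is $Q'\in\mcd^{\omega}$ with $l(Q')\ge 2^{r}l(Q)$ and $\operatorname{dist}(Q,\partial Q')\le l(Q)^{\gamma}l(Q')^{1-\gamma}$, and \emph{good} otherwise. Two probabilistic facts drive everything: (i) $\pi_{\mathrm{bad}}\defeq\mathbb P_{\omega}(Q\text{ bad})$ is independent of $Q$ and tends to $0$ as $r\to\infty$ (for suitable $\gamma$); and (ii) whether a cube of side length $2^{-k}$ is good depends only on the coordinates $\omega_{j}$ with $j\le k$, whereas the \emph{location} of that cube depends only on the $\omega_{j}$ with $j>k$. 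Thus goodness of a cube is independent of its spatial position.

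\emph{Step 2: Haar expansion and reduction to good cubes.} Since $\langle Tf,g\rangle$ is independent of $\omega$, for bounded compactly supported $f,g$ one writes
\[
\langle Tf,g\rangle=\mathbb E_{\omega}\sum_{P,Q\in\mcd^{\omega}}\widehat f(P,\ep)\,\widehat g(Q,\eta)\,\langle T h_{P}^{\ep},h_{Q}^{\eta}\rangle,
\]
the sum being absolutely convergent for such $f,g$. Splitting according to which of $P,Q$ is smaller (the case $l(P)>l(Q)$ reducing to the former upon passing to the adjoint $T^{\ast}$) and invoking the independence in (ii), one shows that the summation may be restricted to pairs in which the smaller cube is \emph{good}, at the cost of the harmless constant $1/(1-\pi_{\mathrm{bad}})$; the bad pairs are rearranged and absorbed using the a priori $L^{2}$-boundedness of the Calder\'on--Zygmund operator $T$. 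I expect this to be the main obstacle: making ``restrict to good cubes'' an exact \emph{identity}, rather than merely an estimate up to small error, is the delicate conditional-expectation bookkeeping that is the technical heart of the theorem.

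\emph{Step 3: matrix-coefficient estimates and assembly of the shifts.} For a pair with the smaller cube good, let $R$ be the minimal common $\mcd^{\omega}$-ancestor of $P$ and $Q$ and set $i,j$ by $l(P)=2^{-i}l(R)$, $l(Q)=2^{-j}l(R)$. The standard size bound on $K$ handles comparable but separated $P,Q$; the smoothness bound, together with the cancellation of the (cancellative) Haar functions and --- crucially --- the separation $\operatorname{dist}(P,\partial Q)\gtrsim l(P)^{\gamma}l(Q)^{1-\gamma}$ furnished by goodness when one cube lies deep inside the other, gives
\[
|\langle T h_{P}^{\ep},h_{Q}^{\eta}\rangle|\lesssim 2^{-\frac{\delta}{2}\kappa_{ij}}\,\frac{\sqrt{|P||Q|}}{|R|}.
\]
Collecting, for each fixed $(i,j)$, all pairs $(P,Q)$ with these relative sizes inside a common $R$ and factoring out $2^{-\frac{\delta}{2}\kappa_{ij}}$, the residual coefficients $a^{\ep\eta}_{PQR}$ satisfy $|a^{\ep\eta}_{PQR}|\le \sqrt{|P||Q|}/|R|=2^{-\frac n2(i+j)}$, so the resulting operator $\mbs^{ij}_{\omega}$ is precisely a dyadic shift of complexity $(i,j)$, cancellative when $(i,j)\neq(0,0)$.

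\emph{Step 4: the exceptional term and summation.} The complexity-$(0,0)$ piece collects the diagonal pairs $P=Q$, controlled by the weak boundedness property of $T$ (available since $T$ is $L^{2}$-bounded), together with the paraproduct contributions generated by $T\unit$ and $T^{\ast}\unit$, which lie in $BMO$; these are exactly what force $\mbs^{00}_{\omega}$ to be allowed non-cancellative. Summing over all $(i,j)$, and using that the norms of the shifts grow at most polynomially in $\kappa_{ij}$ against the factor $2^{-\kappa_{ij}\delta/2}$ to secure convergence, yields
\[
\langle Tf,g\rangle=c\,\mathbb E_{\omega}\sum_{i,j=0}^{\infty}2^{-\kappa_{ij}\frac{\delta}{2}}\langle \mbs^{ij}_{\omega}f,g\rangle,
\]
which is the assertion.
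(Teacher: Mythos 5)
The paper does not prove this theorem; it is quoted verbatim from the cited references \cite{HytRepOrig}, \cite{HytRep}, \cite{HytPerezTV}, so there is no internal proof to compare against. Your sketch faithfully reproduces the architecture of Hyt\"onen's argument — random grids and the good/bad dichotomy with the independence of goodness and position, the exact (not merely approximate) reduction to pairs whose smaller cube is good, the matrix-coefficient estimates $|\langle Th_P^{\ep},h_Q^{\eta}\rangle|\lesssim 2^{-\delta\kappa_{ij}/2}\sqrt{|P||Q|}/|R|$ driven by goodness, and the $T\unit$, $T^*\unit$ paraproducts that make $\mbs^{00}_{\omega}$ non-cancellative — and you correctly flag the conditional-expectation bookkeeping in Step 2 as the technical heart; the only minor imprecision is at the end, where the convergence of the series as a bilinear-form identity for bounded compactly supported $f,g$ follows already from the coefficient decay, with the polynomial-in-$\kappa_{ij}$ norm bounds on the shifts being needed only for the subsequent applications.
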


The statement of this Theorem involves a \emph{random} choice of grids.  However, in all applications of this result, 
one analyzes the norm behavior of the Haar shift operators, establishing bounds that are uniform with respect to the choice of dyadic grid.  The exact manner in which the random dyadic grid are formed is not relevant to us.  
We will discuss the case $i=j=0$ in more detail in Section \ref{Ss:00Remainder}. 

Another useful tool for us will be the weighted estimate below, which can be found in \cites{HytLacey,Lacey,TreilSharpA2}.

\begin{thm} \label{T:SijWeighted}
Let $\mbs_{\omega}^{ij}$ be a dyadic shift operator with complexity $\kappa_{ij}$. Then for any weight $w\in A_p$ with $p>1$:
	\begin{equation} \label{E:SijWeightedUB}
	\left\|\mbs_{\omega}^{ij} : L^p(w) \rightarrow L^p(w)\right\| \lesssim \kappa_{ij} [w]_{A_p}^{\max\left(1, \frac{1}{p-1}\right)}. 
	\end{equation}
\end{thm}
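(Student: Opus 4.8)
The plan is to reduce the bound to the case $p=2$, $w\in A_2$ via extrapolation, and then to prove the sharp $A_2$ estimate, linear in both the complexity $\kappa_{ij}$ and the characteristic $[w]_{A_2}$. Since $\max(1,\tfrac{1}{p-1})=1$ at $p=2$, it suffices, by the Extrapolation Theorem~\ref{T:Extrapolation} in its sharp form (tracking the dependence on the $A_p$ characteristic), to prove
$$
\left\|\mbs_{\omega}^{ij}:L^2(w)\to L^2(w)\right\|\lesssim \kappa_{ij}\,[w]_{A_2},\qquad w\in A_2,
$$
with the implied constant depending only on $n$; extrapolation then promotes this to the claimed $\kappa_{ij}[w]_{A_p}^{\max(1,1/(p-1))}$ bound for every $1<p<\infty$ and $w\in A_p$.

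For the $L^2(w)$ estimate I would pass through a pointwise sparse bound: for every bounded, compactly supported $f$ there is a sparse family $\mathcal S\subset\mcd$ such that
$$
\bigl|\mbs_{\omega}^{ij}f(x)\bigr|\;\lesssim\;\kappa_{ij}\sum_{Q\in\mathcal S}\langle |f|\rangle_Q\,\unit_Q(x)\qquad\text{for a.e. }x,
$$
obtained by the Calder\'on--Zygmund stopping-time construction adapted to Haar shifts: a shift of complexity $(i,j)$ couples a cube $R$ only to cubes within $\kappa_{ij}$ generations of it, and organizing the resulting stopping cubes produces $O(\kappa_{ij})$ genuinely sparse subfamilies, which is the source of the linear factor. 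In the possibly non-cancellative case $(i,j)=(0,0)$ the operator is, up to passing to adjoints, a paraproduct with Carleson coefficients, and the required bound is the standard weighted paraproduct estimate.

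It then remains to show that every sparse operator $\mathcal A_{\mathcal S}g\defeq\sum_{Q\in\mathcal S}\langle g\rangle_Q\unit_Q$ satisfies $\|\mathcal A_{\mathcal S}:L^2(w)\to L^2(w)\|\lesssim [w]_{A_2}$ uniformly in $\mathcal S$. By duality this follows from Sawyer's two-weight testing theorem for positive dyadic operators: testing $\mathcal A_{\mathcal S}(w^{-1}\unit_Q)$ against $w$ over $Q$ (and the dual testing, with $w$ and $w^{-1}$ interchanged) reduces, thanks to sparseness, to the pointwise $A_2$ inequality $\langle w\rangle_Q\langle w^{-1}\rangle_Q\le[w]_{A_2}$ and yields testing constants $\lesssim[w]_{A_2}^{1/2}$. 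Combining this with the sparse domination gives the displayed $L^2(w)$ bound, and extrapolation finishes the proof.

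The main obstacle is securing the \emph{linear} dependence on $\kappa_{ij}$: a naive pointwise domination of a complexity-$(i,j)$ shift loses a factor exponential in $\kappa_{ij}$ (roughly one copy of a sparse operator per configuration of the $2^{n(i+j)}$ descendants involved), and such a loss does not survive summation against the $2^{-\kappa_{ij}\delta/2}$ weights in Hyt\"onen's representation. The delicate bookkeeping that removes it is exactly what is carried out in \cites{HytLacey,Lacey,TreilSharpA2}; alternatively one can bypass sparse domination and argue directly via the Nazarov--Treil--Volberg two-weight $T1$ theorem for dyadic shifts, or via a Bellman function argument, each of which also delivers the linear-in-complexity estimate.
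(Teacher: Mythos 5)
The first thing to say is that the paper does not prove Theorem \ref{T:SijWeighted} at all: it is quoted as a known estimate from \cites{HytLacey,Lacey,TreilSharpA2}, so there is no internal argument to compare yours against. Your sketch is one of the standard routes to that result, and it is coherent: reduce to $L^2(w)$, $w\in A_2$, prove a bound linear in both $\kappa_{ij}$ and $[w]_{A_2}$, and then apply the \emph{sharp} extrapolation theorem of \cite{Extrapolation} (note that the version recorded in the paper as Theorem \ref{T:Extrapolation} does not track the characteristic, so you are right to insist on the sharp form, since that is where the exponent $\max(1,\tfrac1{p-1})$ in \eqref{E:SijWeightedUB} comes from). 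You also correctly isolate the real difficulty, namely obtaining \emph{linear} rather than exponential dependence on the complexity, and you correctly observe that this is exactly the content of the cited works; but be aware that in your write-up this step is asserted, not proved --- the stopping-time/corona construction that yields $O(\kappa_{ij})$ sparse families (or, alternatively, the Bellman-function argument of \cite{TreilSharpA2}) is the entire theorem, so as a self-contained proof your proposal has the same status as the paper's citation. Two smaller points: your claim that the Sawyer testing constants for a sparse operator are $\lesssim[w]_{A_2}^{1/2}$ cannot be right, since via Sawyer's theorem it would give $\|\mathcal A_{\mathcal S}\|_{L^2(w)}\lesssim[w]_{A_2}^{1/2}$, contradicting the known sharpness of the linear $A_2$ bound; the correct estimate for each testing constant is $\lesssim[w]_{A_2}$, which still suffices. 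And for the non-cancellative case $(i,j)=(0,0)$ your reduction to a paraproduct with $BMO$ symbol and the linear weighted paraproduct bound (Beznosova's theorem) is fine.
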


\subsection{Weighted $BMO$-$H^1$ duality} \label{Ss:WeightedBMO}

For a weight $w$ on $\R^n$, the weighted $BMO$ space $BMO(w)$ is defined to be the space of all locally integrable functions $b$ that satisfy:
	\begin{equation} \label{E:wBMONorm1}
	\|b\|_{BMO(w)} \defeq \sup_{Q} \frac{1}{w(Q)} \int_Q |b - \left<b\right>_Q|\,dx  < \infty,
	\end{equation}
where the supremum is over all cubes $Q$ in $\R^n$ with sides parallel to the axes. 
For a general weight, the definition of the $BMO$ norm is highly dependent on its $ L ^{1}$ average.  
But, if the weight is $ A_ \infty $, one is free to replace the $ L ^{1}$-norm by larger averages, 
though this must be done with a little care.
Define  for $ 1\leq q < \infty $, 
	\begin{equation} \label{E:wBMONormq}
	\|b\|_{BMO^q(w)} ^{q} \defeq \sup_{Q}   \frac{1}{w(Q)} \int_Q |b - \left<b\right>_Q|^q\,dw' .
	\end{equation}
Above, note that the conjugate weight is used in place of Lebesgue measure in \eqref{E:wBMONorm1}.  

%%%%%%%%%%%%%%%%%%%%%%%%%%%%%% LEMMA LEMMA LEMMA
\begin{lm}\label{l:MW}\cite{MuckWheeden}*{Thm. 4}  
With the notation above, there holds 
\begin{equation} \label{E:wBMO-qEquiv}
	\|b\|_{BMO(w)} \leq \|b\|_{BMO^q(w)} \lesssim  \|b\|_{BMO(w)}, \qquad   1\leq q \leq 2. 
	\end{equation}
On the right, the implied constant depends upon $ q$ and $ [w] _{A _{2 }}$.  
(And the right inequality is false in general for $ q> 2$.) 
\end{lm}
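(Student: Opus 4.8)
I would prove the two inequalities separately. The left-hand inequality $\|b\|_{BMO(w)}\le\|b\|_{BMO^q(w)}$ requires no hypothesis on $w$ and holds for every $q\ge 1$: it is just Hölder's inequality. Fix a cube $Q$. Since $w>0$ a.e., write
\begin{equation*}
|b-\langle b\rangle_Q| \;=\; \bigl(\,|b-\langle b\rangle_Q|\,w^{(1-q)/q}\,\bigr)\cdot w^{(q-1)/q}
\end{equation*}
and apply Hölder with exponents $q$ and $q'=q/(q-1)$, using $w'=w^{1-q}$ and $\tfrac{q-1}{q}q'=1$:
\begin{equation*}
\int_Q |b-\langle b\rangle_Q|\,dx \;\le\; \Bigl(\int_Q |b-\langle b\rangle_Q|^q\,dw'\Bigr)^{1/q}\Bigl(\int_Q w\,dx\Bigr)^{1/q'}.
\end{equation*}
Dividing by $w(Q)$ and using $\tfrac1{q'}-1=-\tfrac1q$ gives $\tfrac1{w(Q)}\int_Q|b-\langle b\rangle_Q|\,dx\le\bigl(\tfrac1{w(Q)}\int_Q|b-\langle b\rangle_Q|^q\,dw'\bigr)^{1/q}$, and the supremum over $Q$ finishes this direction (an equality when $q=1$, where $w'\equiv 1$).

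The right-hand inequality is the substantive one, and is exactly \cite{MuckWheeden}*{Thm.~4}; here is the strategy I would follow, which also explains the role of the hypotheses. First observe that for $1\le q\le 2$ the weight $w'=w^{1-q}=(w^{-1})^{q-1}$ is itself an $A_2$ weight, with $[w']_{A_2}\le[w]_{A_2}^{q-1}$ — a power in $[0,1]$ of an $A_2$ weight stays in $A_2$, by Jensen's inequality applied to each of the two averages in the $A_2$ condition — so in particular $w'$ satisfies a reverse Hölder inequality. Now fix $Q$, normalize $\|b\|_{BMO(w)}=1$, and aim for $\int_Q|b-\langle b\rangle_Q|^q\,dw'\lesssim w(Q)$ with constant depending only on $q$ and $[w]_{A_2}$. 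A weighted John--Nirenberg inequality — run the Calder{\'o}n--Zygmund stopping-time decomposition of $b-\langle b\rangle_Q$ on $Q$, iterated, using the $A_\infty$ property of $w$ to pass between Lebesgue and $w$-measures of the stopping cubes — yields decay for the super-level sets of $b-\langle b\rangle_Q$ on $Q$. Feeding this into the layer-cake formula, distributing $w'=(w^{-1})^{q-1}$ over the level sets by Hölder, and using the reverse Hölder inequalities for $w$ and $w'$, the estimate closes; the algebraic identity that produces $w(Q)$ on the right is $\langle w\rangle_Q^{\,q-1}\langle w'\rangle_Q \le (\langle w\rangle_Q\langle w^{-1}\rangle_Q)^{q-1}\le[w]_{A_2}^{\,q-1}$, again by Jensen.

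The main obstacle is carrying out the weighted John--Nirenberg step \emph{against the correct weight}: the naive form — measuring super-level sets in Lebesgue measure with decay rate governed by $\langle w\rangle_Q$ — is too lossy and leaves an extraneous power of $\langle w\rangle_Q$ on the right, so the iteration must be arranged so the gain is weighted correctly, and the $[w]_{A_2}$ dependence tracked throughout. Since we are not tracking $A_p$ constants anyway, in the paper we simply invoke \cite{MuckWheeden} for this direction. It is worth noting that $q\le 2$ is genuinely needed and not an artifact of the argument: $q\le 2$ is exactly the range in which $w^{1-q}$ is an $A_2$ weight for \emph{every} $w\in A_2$, whereas for $q>2$ one can choose $w\in A_2$ (a suitable power weight) for which $w^{1-q}\notin L^1_{\mathrm{loc}}$, so that the $BMO^q(w)$ seminorm of every non-constant $b$ is infinite.
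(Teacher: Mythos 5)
Your proposal matches the paper's treatment: the paper likewise disposes of the left inequality by exactly this H\"older computation and simply cites \cite{MuckWheeden}*{Thm.~4} for the substantive right inequality, which you also defer to that reference (with a plausible sketch of why $1\le q\le 2$ and $w\in A_2$ are what make it work). The only quibble is the parenthetical claim that for $q>2$ \emph{every} non-constant $b$ has infinite $BMO^q(w)$ seminorm---a $b$ that is constant near the non-integrable singularity of $w^{1-q}$ need not---but failure ``in general'' is all that is asserted, and neither you nor the paper needs that direction.
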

%%%%%%%%%%%%%%%%%%%%%%%%%%%%%% LEMMA LEMMA LEMMA
The first inequality follows from  H\"older's inequality and $ 1\leq q \leq 2$,  while  the second is more involved.  
The essential point for us is the case $ q=2$ above.

For a dyadic grid $\mcd$ on $\R^n$, we define the dyadic versions of the norms above by taking supremum over $Q \in \mcd$ instead of over all cubes $Q$ in $\R^n$, and denote these spaces by $\wbmod$ and $BMO^q_{\mcd}(w)$. Clearly $BMO(w) \subset \wbmod$ for any choice of $\mcd$, and the equivalence in \eqref{E:wBMO-qEquiv} also holds for the dyadic versions of these spaces. 
	
Now fix a dyadic grid $\mcd$ on $\R^n$ and a weight $ w\in A_ \infty $. Define the dyadic weighted Hardy space $H_{\mcd}^1(w)$ (see \cite{GarciaCuerva}) to be the space of all $\Phi$ that satisfy:
	$$\|\Phi\|_{H_{\mcd}^1 (w)} \defeq \|S_{\mcd}\Phi\|_{L^1(w)} < \infty.$$
The dual space of $H_{\mcd}^1(w)$ is the weighted Carleson measure space $CM_{\mcd}^1(w)$, that is, the space of all locally integrable functions $g$ such that:
	$$\|g\|_{CM_{\mcd}^1(w)} \defeq \sup_{Q\in\mcd} \left( \frac{1}{w(Q)} \sum_{\substack{P\subset Q\in\mcd\\ \ep\neq 1}} \frac{|\widehat{g}(P, \ep)|^2 }{\left<w\right>_P} \right)^{\frac{1}{2}} < \infty,$$
with duality pairing $\left<g, \Phi\right>$ for $g\in CM_{\mcd}^1(w)$ and $\Phi\in H^1_{\mcd}(w)$ see \cites{LeeLinLin,Wu}. We have then
	\begin{equation*} %\label{E:H1BMODual}
	\left|\left< g, \Phi \right>\right| \leq \|g\|_{CM_{\mcd}^1(w)} \|S_{\mcd}\Phi\|_{L^1(w)}.
	\end{equation*}

Specializing to the case of $ w\in A_2$, we have an $ H ^{1}$-$BMO$ duality.  
	
%%%%%%%%%%%%%%%%%%%%%%%%%%%%%% LEMMA LEMMA LEMMA
\begin{lm}\label{l:wA2} If $ w \in A_2$, there holds 
	\begin{equation} \label{E:H1BMODual1}
	\left|\La b, \Phi\Ra\right| \lesssim   \|b\|_{\wbmosd} \|S_{\mcd}\Phi\|_{L^1(w)}.
	\end{equation}
\end{lm}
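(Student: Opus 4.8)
The plan is to reduce the claimed $H^1$-$BMO$ pairing estimate \eqref{E:H1BMODual1} to the already-stated duality pairing between $CM^1_{\mcd}(w)$ and $H^1_{\mcd}(w)$, namely $|\La g,\Phi\Ra|\le \|g\|_{CM^1_{\mcd}(w)}\|S_{\mcd}\Phi\|_{L^1(w)}$. Thus it suffices to show that for $w\in A_2$ one has the embedding $\|b\|_{CM^1_{\mcd}(w)}\lesssim \|b\|_{\wbmosd}$, after which \eqref{E:H1BMODual1} follows by applying the $CM^1$-$H^1$ inequality to $g=b$.

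So the core of the argument is the pointwise-on-cubes estimate: for every $Q\in\mcd$,
	\begin{equation*}
	\frac{1}{w(Q)}\sum_{\substack{P\subset Q,\ P\in\mcd\\ \ep\neq 1}} \frac{|\widehat b(P,\ep)|^2}{\La w\Ra_P} \lesssim \|b\|_{\wbmosd}^2 .
	\end{equation*}
First I would use that $w\in A_2$ means $w'=w^{-1}$, and that $\La w\Ra_P \La w^{-1}\Ra_P \le [w]_{A_2}$, hence $\frac{1}{\La w\Ra_P}\le [w]_{A_2}\La w^{-1}\Ra_P$. This converts the left side into
	\begin{equation*}
	\frac{[w]_{A_2}}{w(Q)}\sum_{\substack{P\subset Q,\ \ep\neq 1}} |\widehat b(P,\ep)|^2 \La w^{-1}\Ra_P,
	\end{equation*}
and now the weight $w^{-1}$ appears in the numerator, which is exactly the weight used in the definition \eqref{E:wBMONormq} of $BMO^2_{\mcd}(w)$ (recall $dw'=w^{-1}dx$ for $A_2$). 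The next step is to recognize the sum $\sum_{P\subset Q}|\widehat b(P,\ep)|^2 \La w^{-1}\Ra_P$ as essentially $\int_Q |S_{\mcd}^{Q}(b\unit_Q)|^2 \,dw'$-type quantity — more precisely, replacing $\La w^{-1}\Ra_P$ by the pointwise values and comparing with the Haar expansion of $b-\La b\Ra_Q$ localized to $Q$, so that the sum is comparable to $\int_Q |b-\La b\Ra_Q|^2\,dw'$ up to the square-function equivalence \eqref{e:SQ} applied on the "grid restricted to $Q$". Dividing by $w(Q)$ then produces exactly $\|b\|_{BMO^2_{\mcd}(w)}^2$, which by Lemma \ref{l:MW} (the case $q=2$) is comparable to $\|b\|_{\wbmod}^2$.

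The step I expect to be the main obstacle is the passage between the "Carleson sum" $\sum_{P\subset Q}|\widehat b(P,\ep)|^2\La w^{-1}\Ra_P$ and the integral $\int_Q|b-\La b\Ra_Q|^2\,dw'$: one cannot simply pull the average $\La w^{-1}\Ra_P$ out of the sum, and the square function identity $\sum_P |\widehat b(P,\ep)|^2\unit_P/|P| = (S_{\mcd}b)^2$ holds with a varying weight only after one invokes \eqref{e:SQ} for the weight $w'\in A_2$ (valid since $w\in A_2 \iff w'\in A_2$). The clean way around this is to note $\La w^{-1}\Ra_P = \frac{1}{|P|}\int_P w^{-1}$ and write $\sum_{P\subset Q}|\widehat b(P,\ep)|^2\La w^{-1}\Ra_P = \int_Q \big(\sum_{P\subset Q}|\widehat b(P,\ep)|^2\frac{\unit_P}{|P|}\big)\,dw' = \int_Q |S^Q_{\mcd}b|^2\,dw'$ where $S^Q_{\mcd}$ sums only over $P\subset Q$; since the Haar coefficients of $b$ and of $b\unit_Q - \La b\Ra_Q$ agree for $P\subsetneq Q$, this equals $\int_Q|S^Q_{\mcd}(b-\La b\Ra_Q)|^2\,dw' \le \int_Q |S_{\mcd}((b-\La b\Ra_Q)\unit_Q)|^2 \, dw' \lesssim \int_Q|b-\La b\Ra_Q|^2\,dw'$ by \eqref{e:SQ} for $w'$, which closes the estimate. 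Everything else — the $A_2$ inequality, Lemma \ref{l:MW}, and the final invocation of the $CM^1$-$H^1$ pairing — is routine.
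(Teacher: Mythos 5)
Your proposal is correct and follows essentially the same route as the paper: reduce \eqref{E:H1BMODual1} to the embedding $\|b\|_{CM^1_{\mcd}(w)} \lesssim \|b\|_{\wbmosd}$, then for each $Q$ use the $A_2$ inequality $\La w\Ra_P^{-1} \lesssim \La w^{-1}\Ra_P$ together with the weighted square-function equivalence \eqref{e:SQ} for $w^{-1}\in A_2$ applied to $(b-\La b\Ra_Q)\unit_Q = \sum_{P\subset Q}\widehat b(P,\ep)h_P^{\ep}$. The paper runs the same chain of inequalities in the opposite order (starting from $\int_Q|b-\La b\Ra_Q|^2\,dw^{-1}$ and descending to the Carleson sum), but the content is identical.
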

%%%%%%%%%%%%%%%%%%%%%%%%%%%%%% LEMMA LEMMA LEMMA	

%%%%%%%%%%%%%%%%%%%%%%%%%%%%%% PROOF PROOF PROOF
\begin{proof}
The inequality follows from $ \|b\|_{CM^1_{\mcd}(w)} \lesssim   \|b\|_{\wbmosd}$. 
 But observe that fixing a cube $ Q$, and expanding $ b$ in the Haar basis, we have 
	\begin{equation} \label{E:BMOHaarExp}
	(b - \La b\Ra_Q)\unit_Q = \sum_{P \subset Q \in\mcd, \ep\neq 1} \widehat{b}(P,\ep) h_P^{\ep}=: B_Q. 
	\end{equation}
Then, by \eqref{e:SQ},  
	\begin{align*}
	\int_Q |b - \avgb_Q|^2 \,dw^{-1} & = \|B_Q\|^2_{L^2(w^{-1})} \\
	& \gtrsim   \|S_{\mcd}B_Q\|^2_{L^2(w^{-1})} \\
	& =   \sum_{P\subset Q\in\mcd,\ep\neq 1} |\widehat{b}(P,\ep)|^2 \La w^{-1}\Ra_P & \\
	& \gtrsim  \sum_{P\subset Q\in\mcd,\ep\neq 1} |\widehat{b}(P,\ep)|^2 \frac{1}{\La w\Ra_P}. &
	\end{align*}
And so the Lemma follows. 
\end{proof}
%%%%%%%%%%%%%%%%%%%%%%%%%%%%%% PROOF PROOF PROOF

%---------------------Bloom BMO----------------------------------%

\subsection{Bloom's $BMO(\nu)$} From here on, fix $1<p<\infty$ and two $A_p$ weights $\mu$ and $\lb$ on $\R^n$, 
and define Bloom's weight 
	$$\nu \defeq \mu^{\frac{1}{p}} \lb^{-\frac{1}{p}}.$$

	%\textcolor{red}{This Lemma is new. }
%%%%%%%%%%%%%%%%%%%%%%%%%%%%%% LEMMA LEMMA LEMMA
\begin{lm}\label{l:} The weight $ \nu$ belongs to the $A_2$ class. In particular: 
$$[\nu]_{A_2} \leq [\mu]_{A_p}^{\frac{1}{p}} [\lb]_{A_p}^{\frac{1}{p}}.$$
Moreover
\begin{equation}
\label{E:NuH1BMODuality}
	\left|\La b, \Phi\Ra\right|  \lesssim   \|b\|_{BMO^2_{\mcd}(\nu)} \|S_{\mcd}\Phi\|_{L^1(\nu)}.
\end{equation}
\end{lm}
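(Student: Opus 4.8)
The plan is to prove the two assertions separately, each by reducing to a statement already established in the excerpt.

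\medskip

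\textbf{Step 1: The $A_2$ bound for $\nu$.} First I would verify directly from the definition of the $A_2$ characteristic that $\nu = \mu^{1/p}\lambda^{-1/p}$ is an $A_2$ weight. Fix a cube $Q$. We must estimate $\La \nu\Ra_Q \La \nu^{-1}\Ra_Q$. Write $\nu^{-1} = \mu^{-1/p}\lambda^{1/p} = \mu^{1/p}\cdot(\mu^{-2/p}) \lambda^{1/p}$; more efficiently, observe that with $q$ the conjugate exponent, $\nu = \mu^{1/p}(\lambda')^{1/(pq)}\cdot(\text{adjust})$ — actually the cleanest route is: apply H\"older's inequality with exponents $p$ and $q$ to the average $\La\nu\Ra_Q = \La \mu^{1/p}\lambda^{-1/p}\Ra_Q$. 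Since $\lambda^{-1/p} = (\lambda^{1-q})^{1/q}\cdot\lambda^{(1-q)/q + 1/p}$... I would instead write $\nu = \mu^{1/p}\cdot(\lambda^{1-q})^{1/q}$ using $\tfrac1p = 1-\tfrac1q$ so that $-\tfrac1p\cdot$ hmm; the identity to exploit is $-\tfrac1p = \tfrac{1-q}{q}\cdot\tfrac{q}{p}$, i.e. $\lambda^{-1/p} = (\lambda')^{1/p}$ where $\lambda' = \lambda^{1-q}$ since $(1-q)\cdot\tfrac1p$... let me just state the mechanism: by H\"older with exponents $p,q$,
\[
\La\nu\Ra_Q = \frac{1}{|Q|}\int_Q \mu^{1/p}\lambda^{-1/p}\,dx \leq \La\mu\Ra_Q^{1/p}\,\La\lambda^{-q/p}\Ra_Q^{1/q} = \La\mu\Ra_Q^{1/p}\,\La\lambda'\Ra_Q^{1/q},
\]
using $q/p = q-1$ so $\lambda^{-q/p} = \lambda^{1-q} = \lambda'$. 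Symmetrically, $\La\nu^{-1}\Ra_Q = \La\lambda^{1/p}\mu^{-1/p}\Ra_Q \leq \La\lambda\Ra_Q^{1/p}\La\mu'\Ra_Q^{1/q}$. Multiplying, $\La\nu\Ra_Q\La\nu^{-1}\Ra_Q \leq (\La\mu\Ra_Q\La\mu'\Ra_Q^{q-1})^{1/p}(\La\lambda\Ra_Q\La\lambda'\Ra_Q^{q-1})^{1/p} \leq [\mu]_{A_p}^{1/p}[\lambda]_{A_p}^{1/p}$ by \eqref{E:ApWeightIneqs}. Taking the supremum over $Q$ gives $[\nu]_{A_2}\leq [\mu]_{A_p}^{1/p}[\lambda]_{A_p}^{1/p}$.

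\medskip

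\textbf{Step 2: The $H^1$--$BMO$ duality for $\nu$.} Since $\nu\in A_2$ by Step 1, inequality \eqref{E:NuH1BMODuality} is an immediate consequence of Lemma \ref{l:wA2} applied with $w=\nu$: indeed \eqref{E:H1BMODual1} reads $|\La b,\Phi\Ra|\lesssim \|b\|_{BMO^2_{\mcd}(w)}\|S_{\mcd}\Phi\|_{L^1(w)}$, which is exactly \eqref{E:NuH1BMODuality} once $w=\nu$. The implied constant depends only on $[\nu]_{A_2}$, hence only on $[\mu]_{A_p}, [\lambda]_{A_p}$ and $p$, consistent with our conventions.

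\medskip

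\textbf{Main obstacle.} There is no serious obstacle; the only thing requiring care is the bookkeeping of H\"older exponents in Step 1 — in particular the identities $q/p = q-1$ and $\lambda^{-q/p}=\lambda'$ — and making sure the two applications of H\"older are arranged so that the product of the resulting averages assembles into exactly the two $A_p$ quantities $\La\mu\Ra_Q\La\mu'\Ra_Q^{p-1}$ and $\La\lambda\Ra_Q\La\lambda'\Ra_Q^{p-1}$, each raised to the power $1/p$. Once that is checked, Step 2 is a citation of the preceding lemma.
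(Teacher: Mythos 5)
Your proposal is correct and follows essentially the same route as the paper: H\"older's inequality with exponents $p,q$ applied to $\La\nu\Ra_Q$ and $\La\nu^{-1}\Ra_Q$, the observation that the product assembles into the two $A_p$ quantities, and then a direct citation of Lemma \ref{l:wA2} with $w=\nu$ for the duality statement. The only blemish is the exponent in your displayed product, which should read $\left(\La\mu\Ra_Q\La\mu'\Ra_Q^{p-1}\right)^{1/p}\left(\La\lambda\Ra_Q\La\lambda'\Ra_Q^{p-1}\right)^{1/p}$ (since $(p-1)/p=1/q$), not $q-1$; your closing paragraph states the correct form, so this is a typo rather than a gap.
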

%%%%%%%%%%%%%%%%%%%%%%%%%%%%%% LEMMA LEMMA LEMMA
	
%%%%%%%%%%%%%%%%%%%%%%%%%%%%%% PROOF PROOF PROOF
\begin{proof}
The second inequality is immediate from \eqref{E:H1BMODual1}.  For the first, 
by H\"older's inequality:
	\begin{equation} \label{E:NuIneqs}
	\La \nu\Ra_Q \leq \La\mu\Ra_Q^{\frac{1}{p}}\La\lb'\Ra_Q^{\frac{1}{q}} \text{\:\: and \:\:} \La\nu^{-1}\Ra_Q \leq \La\mu'\Ra_Q^{\frac{1}{q}}\La\lb\Ra_Q^{\frac{1}{p}},
	\end{equation}
so
	\begin{align*}
	\La \nu\Ra_Q \La\nu^{-1}\Ra_Q & \leq \left(\La\mu\Ra_Q\La\mu'\Ra_Q^{p-1}\right)^{\frac{1}{p}} \left(\La\lb\Ra_Q\La\lb'\Ra_Q^{p-1}\right)^{\frac{1}{p}}\\
	\end{align*}
But the terms in parentheses are at most  $ [ \mu ] _{A_p}$ and $ [ \lambda ] _{A_p}$.  Hence $ \nu \in A_2$.  
	\end{proof}
%%%%%%%%%%%%%%%%%%%%%%%%%%%%%% PROOF PROOF PROOF

We record here a simple inequality. 
	\begin{equation} \label{E:NuIneqs1}
	\La\mu\Ra_Q^{\frac{1}{p}}\La\lb'\Ra_Q^{\frac{1}{q}} \lesssim 
		\frac{1}{\La\mu'\Ra_Q^{\frac{1}{q}}\La\lb\Ra_Q^{\frac{1}{p}}} \lesssim 
		\frac1{\La \nu^{-1}\Ra_Q} \lesssim  \La \nu\Ra_Q.
	\end{equation}

%-------------------------------------------PARAPRODUCTS---------------------------------------------------------%

\section{Two-Weight Inequalities for Paraproduct Operators}
\label{s:Paraproducts}

%-----------------------------------------Paraproducts-----------------------------------------------%

\subsection{Paraproducts} For a fixed dyadic grid $\mcd$ on $\R^n$, the `paraproduct' operators with symbol $b$ are defined by:
\begin{align}\label{e:para}
\Pi_b^{\mcd} f &\defeq \sum_{Q \in \mcd, \ep \neq 1} \widehat{b}(Q, \ep) \left<f\right>_Q h_Q^{\ep},
\\
\Pi_b^{*\,{\mcd}} f &\defeq \sum_{Q \in \mcd, \ep \neq 1} \widehat{b}(Q, \ep) \widehat{f}(Q, \ep) \frac{\unit_Q}{|Q|},
\\ \label{e:gamma} 
\textup{and} \qquad 
\Gamma^{\mcd}_b f &\defeq \sum_{Q \in \mcd} \sum_{\substack{\ep, \eta \neq 1\\ \ep \neq \eta}} \widehat{b}(Q, \ep) \widehat{f}(Q, \eta) \frac{1}{\sqrt{|Q|}} h_Q^{\ep + \eta}.
\end{align}
For ease of notation, we fix $\mcd$ through the rest of this section and suppress the subscript $\mcd$ from the paraproducts.
	
Commutators are a difference of products, and  the product of two functions is decomposed into  paraproducts as follows:
	\begin{equation} \label{E:ParaprodDecomp}
	bf = \Pi_b f + \Pi_f b + \Pi_b^* f + \Gamma_b f.
	\end{equation}

To see the decomposition in \eqref{E:ParaprodDecomp}, express $b$ and $f$ in terms of the Haar expansions:
	$$bf = %\left(\sum_{Q\in\mcd, \ep\neq 1} \widehat{b}(Q, \ep)h_Q^{\ep}\right)\left(\sum_{R\in\mcd, \eta\neq 1}\widehat{f}(R, \eta)h_R^{\eta}\right) =% 
	\sum_{R, Q\in\mcd}\sum_{\ep,\eta\neq 1} \widehat{b}(Q, \ep)\widehat{f}(R, \eta)h_Q^{\ep}h_R^{\eta}$$
and analyze the sum over the three different cases $R \subsetneq Q$, $R \supsetneq Q$, and $Q = R$. The latter case easily yields:
	$$
	%\sum_{\stackrel{Q\in\mcd}{\ep,\eta\neq 1}} \widehat{b}(Q, \ep)\widehat{f}(R, \eta) h_Q^{\eta}h_R^{\eta} %
		\sum_{Q\in\mcd,\ep\neq 1} \widehat{b}(Q, \ep) \widehat{f}(Q, \ep) \frac{\unit_Q}{|Q|} +
		\sum_{Q\in\mcd,\ep\not\equiv 1}\sum_{\eta\not\equiv 1, \ep\neq\eta} \widehat{b}(Q, \ep)\widehat{f}(Q, \eta)\frac{1}{\sqrt{|Q|}}h_Q^{\ep+\eta} 
		= \Pi_b^* f + \Gamma_b f.$$
To illustrate one of the other two cases:
	\begin{eqnarray*}
	\sum_{Q \subsetneq R\in\mcd;\ep,\eta\neq 1} \widehat{b}(Q, \ep)\widehat{f}(R, \eta) h_Q^{\ep}h_R^{\eta} &=& 
		\sum_{Q \subsetneq R\in\mcd;\ep,\eta\neq 1} \widehat{b}(Q, \ep)\widehat{f}(R, \eta) h_Q^{\ep}h_R^{\eta}(Q) \\
	&=& \sum_{Q\in\mcd,\ep\neq 1} \widehat{b}(Q, \ep)h_Q^{\ep} \sum_{R\supsetneq Q, \eta\neq 1} \widehat{f}(R, \eta)h_R^{\eta}(Q)\\
	&=& \sum_{Q\in\mcd,\ep\neq 1} \widehat{b}(Q, \ep) \left<f\right>_Q h_Q^{\ep}= \Pi_b f.
	\end{eqnarray*}
Similarly, the case $R\subsetneq Q$ yields $\Pi_f b$.

We pause here for a moment to remark that the term $\Gamma_b$ disappears in the one-dimensional case, where the familiar decomposition is $bf = \Pi_bf + \Pi_fb + \Pi^*_bf$. By definition \eqref{E:epadddef}, $\ep + \eta = 1$ if and only if $\ep = \eta$, so while $\Pi^*_bf$ maintains its one-dimensional structure and contains all the non-cancellative Haar functions, the term $\Gamma_b$ contains only the cancellative Haar functions. Moreover, as in the one-dimensional case, $\Pi_b^*$ is the adjoint of $\Pi_b$ in \textit{unweighted} $L^2(\R^n)$, while the third paraproduct $\Gamma_b$ is self-adjoint in $L^2(\R^n)$:
	\begin{equation} \label{E:UnweightedAdjoints}
	\La \Pi_b f, g\Ra = \La f, \Pi^*_b g\Ra \:\:\:\:\text{and}\:\:\:\: \La \Gamma_b f, g\Ra = \La f, \Gamma_b g\Ra.
	\end{equation}

\subsection{Two-weight Inequalities for Paraproducts} Next, we discuss boundedness of the paraproducts as operators from $L^p(\mu) \rightarrow L^p(\lb)$. Before we proceed, we make the interesting observation that the adjointness statements about the three paraproducts in unweighted $L^2(dx)$ extend to this case, in the sense of Banach space adjoints. Specifically
\begin{align*}
& \text{The adjoint of } \Pi_b : L^p(\mu) \rightarrow L^p(\lb) \text{ is } \Pi^*_b : L^q(\lb') \rightarrow L^q(\mu');\\
& \text{The adjoint of } \Pi^*_b : L^p(\mu) \rightarrow L^p(\lb) \text{ is } \Pi_b : L^q(\lb') \rightarrow L^q(\mu');\\
& \text{The adjoint of } \Gamma_b : L^p(\mu) \rightarrow L^p(\lb) \text{ is } \Gamma_b : L^q(\lb') \rightarrow L^q(\mu').
\end{align*}
Here, $ \lambda '$ is the conjugate weight, as in \eqref{e:conjugate}.

These follow from \eqref{E:ApDualSpace}. For instance, the adjoint of $\Pi_b : L^p(\mu) \rightarrow L^p(\lb)$ is the unique operator $T : L^q(\lb') \rightarrow L^q(\mu')$ such that 
	$$\La \Pi_b f, g\Ra = \La f, Tg\Ra \text{, for all } f \in L^p(\mu), g\in L^q(\lb').$$
But this is just the inner product in unweighted $L^2(dx)$, so $\La \Pi_b f, g\Ra = \La f, \Pi^*_b g\Ra$, and $T = \Pi^*_b$. The second statement above follows identically, and the third statement follows from the self-adjointness of $\Gamma_b$ in $L^2(dx)$. 

This is a two-weight result for paraproducts, fundamental for us.  

% -----------------------  Theorem 
\begin{thm} \label{T:ParaprodTwoWeight}
Let $\mcd$ be a fixed dyadic grid on $\R^n$, and suppose $b \in \nbmosd$. Then:
	\begin{align} \label{E:PibUBd}
	\left\|\Pi_b : L^p(\mu) \rightarrow L^p(\lb)\right\| = \left\|\Pi^*_b : L^q(\lb') \rightarrow L^q(\mu')\right\| &\lesssim
\|b\|_{BMO^2_{\mcd}(\nu)},
\\
 \label{E:PibStarUBd}
	\left\|\Pi^*_b : L^p(\mu) \rightarrow L^p(\lb)\right\| = \left\|\Pi_b : L^q(\lb') \rightarrow L^q(\mu')\right\| & \lesssim
\|b\|_{BMO^2_{\mcd}(\nu)},
\\
\label{E:GammabUBd}
	\left\|\Gamma_b : L^p(\mu) \rightarrow L^p(\lb)\right\| = \left\|\Gamma_b : L^q(\lb') \rightarrow L^q(\mu')\right\| 
& \lesssim  \|b\|_{BMO^2_{\mcd}(\nu)}. 
	\end{align}
\end{thm}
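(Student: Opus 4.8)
The plan is to reduce everything to $L^2(w)$ estimates for $A_2$ weights via the Extrapolation Theorem \ref{T:Extrapolation}, and within that setting to exploit the weighted $H^1$-$BMO$ duality of Lemma \ref{l:wA2} (in the form \eqref{E:NuH1BMODuality}) together with the square function norm equivalence \eqref{e:SQ}. First observe that the three identities of operator norms on the left-hand sides are exactly the Banach-space adjointness relations established just before the theorem statement, so it suffices to bound $\Pi_b\colon L^p(\mu)\to L^p(\lambda)$, $\Pi_b^*\colon L^p(\mu)\to L^p(\lambda)$, and $\Gamma_b\colon L^p(\mu)\to L^p(\lambda)$. Actually, because $\nu$ is $A_2$ and the roles of $\mu,\lambda,\nu$ are not symmetric under extrapolation, I would instead set up each paraproduct as an operator to which Theorem \ref{T:Extrapolation} directly applies: for a single weight $w\in A_2$ one considers the "unweighted-normalized" versions and extrapolates. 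Concretely, the cleanest route is to prove, for each paraproduct $P\in\{\Pi_b,\Pi_b^*,\Gamma_b\}$, the bound
$$\|Pf\|_{L^2(\nu)}\lesssim \|b\|_{BMO^2_{\mcd}(\nu)}\,\|f\|_{L^2(\nu^{-1})}$$
wait — this mixes two weights, so extrapolation in the stated form does not immediately apply. The correct reduction is the following: one proves the estimate for all $A_2$ weights $w$ in place of the pair $(\mu,\lambda)$ with $\nu$ replaced by $1$ (the unweighted case collapses $\mu=\lambda=w$, $\nu\equiv 1$), obtaining $\|Pf\|_{L^2(w)}\lesssim \|b\|_{BMO^2_{\mcd}(w)}\|f\|_{L^2(w)}$; but that is not what is wanted either, since the symbol's $BMO$ must stay attached to $\nu$. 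So the genuinely correct mechanism, following \cite{HLW}, is: fix $b$, and for a weight $w\in A_2$ apply extrapolation to the pair $(\mu,\lambda)=(w,w)$ is wrong. Let me restate: the standard device here is to extrapolate in the single parameter by writing $\mu = \nu^? \cdots$; the honest statement is that for $A_p$ weights $\mu,\lambda$ with $\mu^{1/p}\lambda^{-1/p}=\nu$ fixed, the two-weight bound follows from the two-weight $L^2$ bound with $\mu,\lambda\in A_2$ and $\nu=\mu^{1/2}\lambda^{-1/2}$ — this is the version of extrapolation actually needed, and it is exactly Theorem \ref{T:Extrapolation} applied to the operator $g\mapsto \mu^{1/p}P(\lambda^{-1/p}g)$ acting on $L^p(dx)$-type spaces; I would invoke the limited-range / off-diagonal extrapolation formalism of \cite{Extrapolation} to legitimize this, treating $A C(w)$ as the dependence on $[\nu]_{A_2}$.

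Granting the reduction to $p=2$, I would prove the three $L^2$ bounds as follows. For $\Pi_b^*$: by \eqref{e:SQ} applied in $L^2(\lambda)$,
$$\|\Pi_b^* f\|_{L^2(\lambda)}^2 \lesssim \sum_{Q,\ep\neq1}|\widehat b(Q,\ep)|^2|\widehat f(Q,\ep)|^2\frac{1}{|Q|^2}\langle\lambda\rangle_Q|Q|$$
— hmm, $\Pi_b^* f$ is not cancellative, so one should first note $\widehat{\Pi_b^*f}$ has a specific form; more robustly, test against $g\in L^2(\lambda^{-1})$, write $\langle\Pi_b^*f,g\rangle=\sum_Q\widehat b(Q,\ep)\widehat f(Q,\ep)\langle g\rangle_Q$ and recognize the right side as $\langle b,\Phi\rangle$ for a suitable $\Phi$ built from $f$ and $g$ whose square function is controlled by $S_\mcd f$ and $Mg$; then apply \eqref{E:NuH1BMODuality} and Cauchy–Schwarz with the $A_2$ relation $\langle\nu\rangle_Q\langle\nu^{-1}\rangle_Q\le[\nu]_{A_2}$ to land on $\|b\|_{BMO^2(\nu)}\|S_\mcd f\|_{L^2(\mu)}\|Mg\|_{L^2(\lambda^{-1})}$, which is $\lesssim\|b\|_{BMO^2(\nu)}\|f\|_{L^2(\mu)}\|g\|_{L^2(\lambda^{-1})}$ by \eqref{e:SQ} and \eqref{E:MaxFWeight}. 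For $\Pi_b$: this is the dual statement, handled either by the adjointness identity or by the same $H^1$-$BMO$ pairing with the roles of $f$ and $g$ interchanged — now $\langle\Pi_bf,g\rangle=\sum_Q\widehat b(Q,\ep)\langle f\rangle_Q\widehat g(Q,\ep)=\langle b,\Phi\rangle$ with $\widehat\Phi(Q,\ep)=\langle f\rangle_Q\widehat g(Q,\ep)$, and $S_\mcd\Phi\le (Mf)(S_\mcd g)$ pointwise, so $\|S_\mcd\Phi\|_{L^1(\nu)}\le\|Mf\|_{L^2(\mu)}\|S_\mcd g\|_{L^2(\mu^{-1})}$ — wait, the weight bookkeeping: $\nu = \mu^{1/2}\lambda^{-1/2}$ in the $p=2$ case, so by Hölder $\|S_\mcd\Phi\|_{L^1(\nu)}\le\|Mf\cdot\mu^{1/2}\|_{L^2}\|S_\mcd g\cdot\lambda^{-1/2}\|_{L^2}=\|Mf\|_{L^2(\mu)}\|S_\mcd g\|_{L^2(\lambda^{-1})}\lesssim\|f\|_{L^2(\mu)}\|g\|_{L^2(\lambda^{-1})}$. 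That closes $\Pi_b$. For $\Gamma_b$: since it is cancellative and self-adjoint in $L^2(dx)$, use \eqref{e:SQ} in $L^2(\lambda)$:
$$\|\Gamma_b f\|_{L^2(\lambda)}^2\lesssim\Big\|\Big(\sum_{Q}\Big(\sum_{\substack{\ep,\eta\neq1\\\ep+\eta=\cdot}}|\widehat b(Q,\ep)||\widehat f(Q,\eta)|\Big)^2\frac{\unit_Q}{|Q|^2}\Big)^{1/2}\Big\|_{L^2(\lambda)}^2,$$
and since for each fixed $Q$ the inner sum over the finitely many signatures is, by Cauchy–Schwarz, at most $2^n\big(\sum_\ep|\widehat b(Q,\ep)|^2\big)^{1/2}\big(\sum_\eta|\widehat f(Q,\eta)|^2\big)^{1/2}$, one gets $|\Gamma_b f|^2\lesssim$ a product that is again estimated by the $H^1$-$BMO$ pairing exactly as for $\Pi_b$, or more directly by pairing against $g\in L^2(\lambda^{-1})$ and forming $\Phi$ with $\widehat\Phi(Q,\ep)=\sum_{\eta}\widehat f(Q,\eta)\widehat g(Q,\eta+\ep)/\sqrt{|Q|}\cdot\cdots$ whose square function obeys $S_\mcd\Phi\lesssim (S_\mcd f)(S_\mcd g)$ pointwise up to the $\unit_Q/\sqrt{|Q|}$ normalization, giving $\|S_\mcd\Phi\|_{L^1(\nu)}\lesssim\|S_\mcd f\|_{L^2(\mu)}\|S_\mcd g\|_{L^2(\lambda^{-1})}\lesssim\|f\|_{L^2(\mu)}\|g\|_{L^2(\lambda^{-1})}$.

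The main obstacle, and the step deserving the most care, is the weight bookkeeping in the reduction to $p=2$: extrapolation in the stated form is a single-weight statement, whereas the conclusion is genuinely two-weight with $\nu$ linking $\mu$ and $\lambda$. The way around this is the observation — already implicit in the paper's remark that one need only prove $L^2$-for-$A_2$ estimates — that one should extrapolate the \emph{scalar} inequality in the one free parameter after the substitution $f\mapsto\lambda^{-1/p}f$, $g\mapsto\mu^{-1/q}g$, which converts the two-weight pairing into an unweighted pairing against a weight of the form $\mu^{1/p}\lambda^{-1/p}=\nu$; then the operator $g\mapsto$ (the resulting bilinear form) is a single-weight object in $\nu$ and Theorem \ref{T:Extrapolation} applies with $C(w)=C([\nu]_{A_2})$. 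A secondary technical point is verifying the pointwise square-function domination $S_\mcd\Phi\lesssim (Mf)(S_\mcd g)$ and its variants, which requires the elementary fact that $\langle f\rangle_Q\le Mf(x)$ for $x\in Q$ and that the tree structure of $\mcd$ lets one pull the slowly-varying factor out of the $\ell^2$ sum; this is routine but must be done carefully for the $\Gamma_b$ term where two Haar coefficients of genuinely different functions are multiplied. Once these two points are settled, all three estimates follow from \eqref{E:NuH1BMODuality}, \eqref{e:SQ}, and \eqref{E:MaxFWeight} in a uniform way.
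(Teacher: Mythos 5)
Your core mechanism --- pair against $g$ in the dual space, recognize the bilinear form as $\langle b,\Phi\rangle$, invoke the weighted $H^1$--$BMO$ duality \eqref{E:NuH1BMODuality} to extract $\|b\|_{BMO^2_{\mcd}(\nu)}$, and then control $S_{\mcd}\Phi$ by $(Mf)(S_{\mcd}g)$ --- is exactly the paper's argument. But you have wrapped it in a reduction to $p=2$ by extrapolation, and that reduction is a genuine gap. Theorem \ref{T:Extrapolation} as stated is a \emph{single-weight} statement: it extrapolates $\|Tf\|_{L^2(w)}\le AC(w)\|f\|_{L^2(w)}$ over $w\in A_2$ to $L^p(w)$ over $w\in A_p$. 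What you need is to extrapolate a genuinely two-weight inequality $L^p(\mu)\to L^p(\lambda)$ over all pairs $\mu,\lambda\in A_p$ with $\mu^{1/p}\lambda^{-1/p}=\nu$ fixed; your own text records three failed attempts to shoehorn this into the single-weight theorem before appealing to a ``limited-range / off-diagonal extrapolation formalism'' that is neither stated nor proved anywhere in the paper (a Bloom-type extrapolation theorem does exist in the later literature, but you cannot cite it here, and the substitution $f\mapsto\lambda^{-1/p}f$ you sketch does not produce an operator to which Theorem \ref{T:Extrapolation} applies, since the resulting object still depends on $\mu$ and $\lambda$ separately, not only on $\nu$).

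The irony is that the detour is unnecessary: your $p=2$ argument already works verbatim for all $1<p<\infty$. The only place the exponent enters is the H\"older step, and since $\lambda^{-1/p}=(\lambda')^{1/q}$ one has $\nu=\mu^{1/p}(\lambda')^{1/q}$, whence
\begin{equation*}
\|S_{\mcd}\Phi\|_{L^1(\nu)}\le\int (Mf)\,\mu^{1/p}\cdot (S_{\mcd}g)\,(\lambda')^{1/q}\,dx
\le\|Mf\|_{L^p(\mu)}\|S_{\mcd}g\|_{L^q(\lambda')}\lesssim\|f\|_{L^p(\mu)}\|g\|_{L^q(\lambda')},
\end{equation*}
using \eqref{E:MaxFWeight} and \eqref{e:SQ}. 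This is precisely how the paper closes the estimate, with no extrapolation at all; note also that the $H^1$--$BMO$ pairing \eqref{E:NuH1BMODuality} only involves $\nu\in A_2$ and is insensitive to $p$. Delete the reduction and state the argument for general $p$. One further caution on $\Gamma_b$: the pointwise bound $S_{\mcd}\Phi\lesssim(S_{\mcd}f)(S_{\mcd}g)$ you propose is false (Cauchy--Schwarz in the signatures gives a bound on each $Q$-summand by $(S_{\mcd}f)^2(S_{\mcd}g)^2\unit_Q$, but the subsequent sum over $Q$ overcounts). The correct route --- which you also mention --- is to bound one of the two Haar coefficients by a maximal function via $|\widehat f(Q,\eta)|/\sqrt{|Q|}\le\langle|f|\rangle_Q\le\inf_Q Mf$, leaving a genuine $\ell^2$ sum in the other function, so that $S_{\mcd}\Phi\lesssim(Mf)(S_{\mcd}g)$ and the same H\"older step applies.
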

% -----------------------  Theorem  

\begin{proof} 
The proof is by duality, exploiting the $ H^1$-$BMO$ duality inequality \eqref{E:NuH1BMODuality} 
to gain the term $ \|b\|_{BMO^2_{\mcd}(\nu)}$. This will leave us with a bilinear square function involving $ f$ and $ g$, 
which will be controlled by a product of a maximal function and a linear square function. The details are as follows. 
We let $f \in L^p(\mu)$ and $g \in L^q(\lb')$. Then
	\begin{align*}
	|\La \Pi_b f, g\Ra| & = \left| \sum_{Q\in\mcd,\ep\neq 1} \widehat{b}(Q,\ep) \La f\Ra_Q \widehat{g}(Q, \ep) \right| & \\
		& = |\La b, \Phi\Ra| & \text{ where } \Phi \defeq \sum_{Q\in\mcd,\ep\neq 1} \La f\Ra_Q \widehat{g}(Q,\ep) h_Q^{\ep}\\
		& \lesssim  \|b\|_{BMO^2_{\mcd}(\nu)} \|S_{\mcd} \Phi\|_{L^1(\nu)} & \text{ by \eqref{E:NuH1BMODuality}.}
	\end{align*}
Now, $ S_{\mcd}\Phi$ is bilinear in $ f$ and $ g$, and is no more than 
	$$(S_{\mcd}\Phi)^2  = \sum_{Q\in\mcd,\ep\neq 1} |\La f\Ra_Q|^2 |\widehat{g}(Q,\ep)|^2 \frac{\unit_Q}{|Q|}
		 \leq (Mf)^2 \sum_{Q\in\mcd,\ep\neq 1} |\widehat{g}(Q,\ep)|^2 \frac{\unit_Q}{|Q|}
		 = (Mf)^2 (S_{\mcd}g)^2.$$
A straight forward application of H\"older's inequality, and bounds for the maximal and square functions will complete the proof. 
	\begin{align*}
	\|S_{\mcd}\Phi\|_{L^1(\nu)} & \leq \int (Mf) (S_{\mcd}g) \,d\mu^{\frac{1}{p}} \lb^{-\frac{1}{p}} & \\
		& \leq \|Mf\|_{L^p(\mu)} \|S_{\mcd}g\|_{L^q(\lb')} \lesssim \|f\|_{L^p(\mu)}  \|g\|_{L^q(\lb')}
	\end{align*}
	 by \eqref{E:MaxFWeight}, \eqref{e:SQ}. 
	 This completes the proof of \eqref{E:PibUBd}.

\medskip 

The second set of inequalities  \eqref{E:PibStarUBd} are equivalent to the first, by a simple duality argument.  
Concerning the last set of inequalities,  \eqref{E:GammabUBd}, they are different in that the operator 
 only has cancellative Haar functions. One can bound Haar coefficients by 
maximal functions, doing so on either $ f$ or $ g$.  

\end{proof}
%%%%%%%%%%%%%%%%%%%%%%%%%%%%%% PROOF PROOF PROOF

%--------------------------------------EQUIVALENCES FOR DYADIC BLOOM BMO----------------------------------------%

\section{Equivalences for Dyadic Bloom $BMO$}
\label{s.EquivalenceBMO}
Bloom's $ BMO$ space has several equivalent formulations, which is a key fact in proof of the main theorems.  
Those that we need are summarized here.  
For a fixed dyadic grid $\mcd$ on $\R^n$ define the quantities:
	\begin{equation} \label{E:B1D}
	\mathbb{B}^{\mcd}_{1}(b, \mu, \lb) \defeq \sup_{Q\in\mcd} \left(\frac{1}{\mu(Q)} \int_Q |b - \La b\Ra_Q|^p \,d\lb\right)^{\frac{1}{p}};
	\end{equation}
	\begin{equation} \label{E:B2D}
	\mathbb{B}^{\mcd}_{2}(b, \mu', \lb') \defeq \sup_{Q\in\mcd} \left(\frac{1}{\lb'(Q)} \int_Q |b - \La b\Ra_Q|^q \,d\mu'\right)^{\frac{1}{q}}.
	\end{equation}
We provide several equivalent statements for the dyadic version of Bloom's $BMO$ space $BMO_{\mcd}(\nu)$. 

\begin{thm} \label{T:BloomEquiv}
Let $\mcd$ be a fixed dyadic grid on $\R^n$. The following are equivalent:
	\begin{enumerate}[{\normalfont (1)}]
	\item $b \in BMO_{\mcd}^2(\nu)$.
	\item The operator $\Pi_b : L^p(\mu) \rightarrow L^p(\lb)$ is bounded.
	\item The operator $\Pi^*_b : L^p(\mu) \rightarrow L^p(\lb)$ is bounded.
	\item The operators $\Pi_b$ and $\Pi^*_b$ are bounded $L^2(\nu) \rightarrow L^2(\nu^{-1})$.
	\item $\mathbb{B}^{\mcd}_1(b, \mu, \lb) < \infty$.
	\item $\mathbb{B}^{\mcd}_2(b, \mu', \lb') < \infty$.
	\item $b \in BMO_{\mcd}(\nu)$.
	\end{enumerate}
\end{thm}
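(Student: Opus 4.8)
The strategy is to prove a cycle of implications connecting all seven statements, routing everything through the Haar-analytic conditions $(4)$, $(5)$, $(6)$ and through the already-proved two-weight paraproduct bounds of Theorem~\ref{T:ParaprodTwoWeight}. The implications $(1) \Rightarrow (2)$, $(1) \Rightarrow (3)$, and $(1) \Rightarrow (4)$ are immediate from Theorem~\ref{T:ParaprodTwoWeight} (the $L^2(\nu) \to L^2(\nu^{-1})$ case being just the case $p=2$, $\mu = \nu$, $\lb = \nu^{-1}$, so that $\nu_{\mathrm{Bloom}} = \nu^{1/2}(\nu^{-1})^{-1/2} = \nu$). The equivalence $(1) \Leftrightarrow (7)$ is exactly Lemma~\ref{l:MW} (the Muckenhoupt--Wheeden equivalence \eqref{E:wBMO-qEquiv} with $q=2$), applied in its dyadic form, using that $\nu \in A_2 \subset A_\infty$ as established in Lemma~\ref{l:}. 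So the real content is closing the loop: I would show $(2) \Rightarrow (5)$, $(5) \Rightarrow (1)$ (and symmetrically $(3) \Rightarrow (6) \Rightarrow (1)$), and check that $(4)$ feeds back in as well.

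\textbf{From boundedness of $\Pi_b$ to $\mathbb{B}_1$.} Given that $\Pi_b : L^p(\mu) \to L^p(\lb)$ is bounded, fix $Q \in \mcd$ and test against $f = \unit_Q$. Then $\Pi_b(\unit_Q) = \sum_{P \subseteq Q, \ep \neq 1} \widehat b(P,\ep) h_P^\ep = B_Q$ in the notation of \eqref{E:BMOHaarExp}, which equals $(b - \La b \Ra_Q)\unit_Q$. Hence
\begin{equation*}
\left(\int_Q |b - \La b\Ra_Q|^p \, d\lb\right)^{1/p} = \|\Pi_b \unit_Q\|_{L^p(\lb)} \leq \|\Pi_b\| \, \|\unit_Q\|_{L^p(\mu)} = \|\Pi_b\| \, \mu(Q)^{1/p},
\end{equation*}
which is precisely $\mathbb{B}^{\mcd}_1(b,\mu,\lb) \leq \|\Pi_b\|$. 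The symmetric argument with $\Pi^*_b : L^q(\lb') \to L^q(\mu')$ (its Banach adjoint, by the adjointness table following \eqref{E:UnweightedAdjoints}) gives $(3) \Rightarrow (6)$; and testing the $L^2(\nu) \to L^2(\nu^{-1})$ bounds of $(4)$ on $\unit_Q$ recovers $\sup_Q \nu(Q)^{-1}\int_Q |b - \La b\Ra_Q|^2 \, d\nu^{-1} < \infty$, i.e. $b \in BMO^2_\mcd(\nu)$, closing that branch directly into $(1)$.

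\textbf{From $\mathbb{B}_1$ (or $\mathbb{B}_2$) back to $BMO^2_\mcd(\nu)$ --- the main obstacle.} This is the step requiring actual work: one must convert a ``single-weight-ratio'' oscillation control $\nu(Q)^{-1}\int_Q|b-\La b\Ra_Q|^2\,d\nu^{-1}$ into, or out of, the ``mixed'' conditions $\mathbb{B}_1$ and $\mathbb{B}_2$. The key is that the extra structure $\mu, \lb \in A_p$ forces these a priori different quantities to be comparable. I would argue as follows. Starting from $\mathbb{B}_1 < \infty$, write $|b - \La b\Ra_Q|$ against $d\nu^{-1}$ and split via H\"older with exponents chosen so that one factor is $\left(\int_Q |b - \La b\Ra_Q|^p d\lb\right)^{1/p}$ and the other is a power of an average of $\lb' = \lb^{1-q}$, $\mu$, $\mu'$ over $Q$; then use \eqref{E:NuIneqs}, \eqref{E:NuIneqs1} and the $A_p$ inequalities \eqref{E:ApWeightIneqs} to see that the product of these averages telescopes against $\nu(Q)/|Q|$ and $\nu^{-1}$-mass, yielding $\|b\|_{BMO^2_\mcd(\nu)} \lesssim \mathbb{B}_1$. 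Concretely the exponents are dictated by $\nu^{-1} = \mu^{-1/p}\lb^{1/p}$ and by needing $d\lb$ to appear: writing $d\nu^{-1} = \mu^{-1/p}\lb^{1/p}\,dx = (\mu'\lb^{1/(p-1)})^{?}\cdots$ and balancing, one lands on a bound whose constant involves only $[\mu]_{A_p}, [\lb]_{A_p}$. For the reverse direction $(1) \Rightarrow (5)$ (needed so that the cycle is genuinely closed rather than merely a tree), run the same H\"older-plus-$A_p$ computation in the opposite order, bounding $\int_Q |b-\La b\Ra_Q|^p\,d\lb$ by $\int_Q |b - \La b\Ra_Q|^2\,d\nu^{-1}$ times averages of weights, after first invoking Lemma~\ref{l:MW} to upgrade the $L^2(\nu^{-1})$ control of the oscillation to $L^p$-type control if $p > 2$, or using H\"older directly if $p \leq 2$; the same $A_p$ balancing then produces $\mathbb{B}_1 \lesssim \|b\|_{BMO^2_\mcd(\nu)}$. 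The delicate point throughout is tracking which $A_p$ characteristic absorbs which average --- and, as the authors flag, since we are not chasing sharp constants, it suffices to know each such product of averages is bounded by a fixed power of $[\mu]_{A_p}[\lb]_{A_p}$, which \eqref{E:ApWeightIneqs} and \eqref{E:NuIneqs} supply.
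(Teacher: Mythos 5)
Your overall architecture (route everything through the paraproducts, test on indicators, use Muckenhoupt--Wheeden for $(1)\Leftrightarrow(7)$) matches the paper, but two of your key steps have genuine problems. First, the identity $\Pi_b\unit_Q = (b-\La b\Ra_Q)\unit_Q$ is false: since $\La \unit_Q\Ra_P = |Q|/|P|$ for $P\supsetneq Q$, one has
\begin{equation*}
\Pi_b\unit_{Q} = \sum_{P\subseteq Q,\, \ep\not\equiv 1} \widehat{b}(P,\ep) h_P^{\ep} + \sum_{P\supsetneq Q,\, \ep\not\equiv 1} \widehat{b}(P,\ep)\frac{|Q|}{|P|}h_P^{\ep},
\end{equation*}
and the tail over $P\supsetneq Q$ does not vanish. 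So your one-line derivation of $(2)\Rightarrow(5)$ does not stand as written. The paper repairs this by passing to square functions: the Haar coefficients of $B_Q=(b-\La b\Ra_Q)\unit_Q$ are a subset of those of $\Pi_b\unit_Q$, so $S_{\mcd}B_Q\leq S_{\mcd}(\Pi_b\unit_Q)$ pointwise, and two applications of the weighted Littlewood--Paley equivalence give $\|B_Q\|_{L^p(\lb)}\lesssim\|\Pi_b\unit_Q\|_{L^p(\lb)}\lesssim\|\Pi_b\|\,\mu(Q)^{1/p}$. (For $(4)\Rightarrow(1)$ your conclusion survives, but only because there both paraproducts are assumed bounded and one can use the correct identity $\unit_Q(b-\La b\Ra_Q)=\unit_Q(\Pi_b\unit_Q-\Pi_b^*\unit_Q)$, which is what the paper does; citing $\Pi_b\unit_Q=B_Q$ there would repeat the same error.)

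Second, your ``main obstacle'' step $(5)\Rightarrow(1)$ is both harder than necessary and not actually carried out: the H\"older-plus-balancing you sketch produces averages of powers of the weights such as $\int_Q (\mu\lb)^{-1/(p-2)}$ (when $p>2$), which are not controlled by the $A_p$ hypotheses --- only the specific dual power $w^{-1/(p-1)}$ is integrable for a general $A_p$ weight, so ``the product of these averages telescopes'' is an assertion, not a proof. The paper avoids this entirely by aiming at $(7)$ instead of $(1)$: a single application of H\"older going \emph{down} to the $L^1$ oscillation gives
\begin{equation*}
\int_Q |b - \La b\Ra_Q|\,dx \leq \Bigl(\int_Q |b - \La b\Ra_Q|^p\,d\lb\Bigr)^{\frac{1}{p}}\lb'(Q)^{\frac{1}{q}} \lesssim \mathbb{B}^{\mcd}_1(b,\mu,\lb)\,\mu(Q)^{\frac{1}{p}}\lb'(Q)^{\frac{1}{q}} \lesssim \mathbb{B}^{\mcd}_1(b,\mu,\lb)\,\nu(Q),
\end{equation*}
using \eqref{E:NuIneqs1}; then $(7)\Rightarrow(1)$ is exactly the Muckenhoupt--Wheeden self-improvement you already invoked. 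Since you have $(1)\Leftrightarrow(7)$ in hand, this closes the loop, and your separate proof of $(1)\Rightarrow(5)$ is superfluous (it also follows from $(1)\Rightarrow(2)\Rightarrow(5)$, and is recorded in the paper as Proposition \ref{P:Pre-BloomLemma}).
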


\begin{proof}
\noindent $(1) \Rightarrow (2)$ and $ (3)$. 
This is the core of Theorem \ref{T:ParaprodTwoWeight}.

\vspace{0.1in}

\noindent $(2) \Rightarrow (5)$ and $(3)\Rightarrow (6)$: 
These two assertions are the same by duality, and we consider the first implication.  
Assuming the paraproduct $ \Pi_b $ is bounded, we test the norm of this operator on indicators of intervals, 
and get the condition in (5).  By the Littlewood-Paley inequalities in weighted $ L ^{p}$ spaces,  \eqref{e:SQ}, we have 
\begin{align*}
 \|\unit_Q(b - \La b\Ra_Q)\|_{L^p(\lb)} 
 & \lesssim  \|S_{\mcd} [\unit_Q(b-\La b\Ra_Q)]\|_{L^p(\lb)} 
 \\ 
 & \leq  \|S_{\mcd}(\Pi_b\unit_Q)\|_{L^p(\lb)} 
\end{align*}
Above, we have the square function of $ \Pi_b\unit_Q$, which follows from the identity 
\begin{equation*}
\Pi_b\unit_{Q} = \sum_{P\subset Q, \ep\not\equiv 1} \widehat{b}(P,\ep) h_P^{\ep} + 
		\sum_{P\supsetneq Q, \ep\not\equiv 1} \widehat{b}(P,\ep)\frac{|Q|}{|P|}h_P^{\ep}. 
\end{equation*}
Again by the Littlewood-Paley inequalities, 
\begin{equation*}
 \|S_{\mcd}(\Pi_b\unit_Q)\|_{L^p(\lb)}  \lesssim  \|\Pi_b \unit_Q\|_{L^p(\lb)}.
\end{equation*}
But, the assumption of the norm boundedness of the paraproduct implies that we have 
\begin{equation*}
\lambda (Q) ^{-1/p}  \|S_{\mcd} [\unit_Q(b-\La b\Ra_Q)]\|_{L^p(\lb)}  
\lesssim   \lVert \Pi_b : L^p(\mu) \rightarrow L^p(\lb)\rVert. 
\end{equation*}
	
\vspace{0.1in}
\noindent $(5) $ or $(6) \Rightarrow (7)$: Suppose $\mathbb{B}^{\mcd}_1(b, \mu, \lb) < \infty$, the case of 
$\mathbb{B}^{\mcd}_2(b, \mu', \lb') < \infty$ being similar. Then
	\begin{align*}
	\int_Q |b - \La b\Ra_Q|\,dx & \leq \left(\int_Q |b - \La b\Ra_Q|^p\,d\lb\right)^{\frac{1}{p}} 
		\left(\int_Q \,d\lb'\right)^{\frac{1}{q}} & \\
	& \lesssim  \mathbb{B}^{\mcd}_1(b, \mu,\lb) \mu(Q)^{\frac{1}{p}} \lb'(Q)^{\frac{1}{q}} & \\
	& \lesssim  \mathbb{B}^{\mcd}_1(b, \mu,\lb) \nu(Q) & \text{by \eqref{E:NuIneqs1}}.
	\end{align*}
That is, we have shown 
$\|b\|_{BMO_{\mcd}(\nu)} \leq \mathbb{B}^{\mcd}_1(b, \mu,\lb) $. 
	
\vspace{0.1in}
\noindent $(7) \Leftrightarrow (1)$: As discussed in Section \ref{Ss:WeightedBMO}, this is proved in \cite{MuckWheeden}.

\vspace{0.1in}
\noindent $(1) \Rightarrow (4)$: Suppose $b \in BMO_{\mcd}^2(\nu)$. We make use of the duality $(L^2(\nu))^* \equiv L^2(\nu^{-1})$, with the usual unweighted $L^2$ inner product as the duality pairing, and let $f, g \in L^2(\nu)$. Then
	$$|\La \Pi_b f, g\Ra| = |\La b, \Phi\Ra| \lesssim  \|b\|_{BMO^2_{\mcd}(\nu)} \|S_{\mcd}\Phi\|_{L^1(\nu)},$$
where $\Phi = \sum_{P,\ep} \La f\Ra_P \widehat{g}(P,\ep) h_P^{\ep}$. It follows easily that $S_{\mcd}\Phi \leq (Mf)(S_{\mcd}g)$, and then from \eqref{E:MaxFWeight} and \eqref{e:SQ}:
	$$\|S_{\mcd}\Phi\|_{L^1(\nu)} \leq \|Mf\|_{L^2(\nu)} \|S_{\mcd}f\|_{L^2(\nu)} \lesssim 
\|f\|_{L^2(\nu)}  \|g\|_{L^2(\nu)}.$$
Then
	$$\|\Pi_b : L^2(\nu) \rightarrow L^2(\nu^{-1}) \| \lesssim  \|b\|_{BMO^2_{\mcd}(\nu)}.$$
The same statement for $\Pi^*_b$ follows by noting that $\Pi_b^* : L^2(\nu) \rightarrow L^2(\nu^{-1})$ is the adjoint of $\Pi_b : L^2(\nu) \rightarrow L^2(\nu^{-1})$.

\vspace{0.1in}
\noindent $(4) \Rightarrow (1)$: Suppose $\Pi_b : L^2(\nu) \rightarrow L^2(\nu^{-1})$ is bounded. Then
	$$\|\Pi_b\unit_Q\|_{L^2(\nu^{-1})} \leq A\nu(Q)^{\frac{1}{2}} \:\: \text{ and } \:\:
		\|\Pi^*_b\unit_Q\|_{L^2(\nu^{-1})} \leq A\nu(Q)^{\frac{1}{2}}.$$
In this situation we know that both paraproducts are bounded, so we can get to the $BMO^2_{\mcd}(\nu)$ norm of $b$ faster than in the square function approach, by noting that
	\begin{equation} \label{E:BMOPiPiStar}
	\unit_Q (b - \La b\Ra_Q) = \unit_Q \left( \Pi_b \unit_Q - \Pi^*_b\unit_Q \right).
	\end{equation}
Then
	\begin{align*}
	\int_Q |b - \La b\Ra_Q|^2 \,d\nu^{-1} & = \int_Q \left| \Pi_b \unit_Q - \Pi^*_b\unit_Q\right|^2 \,d\nu^{-1}\\
		& \leq 2\|\Pi_b\unit_Q\|^2_{L^2(\nu^{-1})} + 2\|\Pi^*_b\unit_Q\|^2_{L^2(\nu^{-1})}\\
		& \leq 4A^2 \nu(Q),
	\end{align*}
which gives exactly $\|b\|_{BMO^2_{\mcd}(\nu)} \leq 2A$.
\end{proof}

We note here that the equivalence $(1) \Leftrightarrow (4)$ in fact holds for any $A_2$ weight. Moreover, the strategy used in proving this last equivalence above can be employed to give more precise bounds for the quantities $\mathbb{B}_1^{\mcd}(b,\mu,\lb)$ and $\mathbb{B}_2^{\mcd}(b,\mu',\lb')$ when $b\in BMO_D^2(\nu)$. For in this case, we know from Theorem \ref{T:ParaprodTwoWeight} that
	\begin{align*}
	\|\Pi_b\unit_Q\|_{L^p(\lb)} + \|\Pi^*_b\unit_Q\|_{L^p(\lb)}  & \lesssim  \|b\|_{BMO^2_{\mcd}(\nu)} \mu(Q)^{\frac{1}{p}},  
\\
	\|\Pi_b\unit_Q\|_{L^q(\mu')} + \|\Pi^*_b\unit_Q\|_{L^q(\mu')}&\lesssim  \|b\|_{BMO^2_{\mcd}(\nu)} \lb'(Q)^{\frac{1}{q}}. 
	\end{align*}
Then, using \eqref{E:BMOPiPiStar}, we have for any $Q\in\mcd$:
	\begin{align*}
	\left(\int_Q |b - \La b\Ra_Q|^p\,d\lb\right)^{\frac{1}{p}} & = \|\unit_Q (\Pi_b\unit_Q - \Pi_b^*\unit_Q)\|_{L^p(\lb)}\\
		& \leq \|\Pi_b\unit_Q\|_{L^p(\lb)} + \|\Pi^*_b\unit_Q\|_{L^p(\lb)}\\
		& \lesssim  \|b\|_{BMO^2_{\mcd}(\nu)} \mu(Q)^{\frac{1}{p}}.
	\end{align*}
The similar statement for $\mathbb{B}_2^{\mcd}(b, \mu',\lb')$ follows immediately by considering the paraproducts as operators $L^q(\lb') \rightarrow L^q(\mu')$. We state this result separately.

\begin{prop} \label{P:Pre-BloomLemma}
Let $\mu, \lb \in A_p$ with $1<p<\infty$ and put $\nu \defeq \mu^{\frac{1}{p}} \lb^{-\frac{1}{p}}$. Then for any dyadic grid $\mcd$ on $\R^n$ and any $b \in BMO^2_{\mcd}(\nu)$:
	\begin{equation} \label{E:Pre-BloomLemma1}
	\mathbb{B}_1^{\mcd}(b, \mu, \lb) \lesssim   \|b\|_{BMO^2_{\mcd}(\nu)} \text{, and } \mathbb{B}_2^{\mcd}(b, \mu', \lb') \lesssim   \|b\|_{BMO^2_{\mcd}(\nu)}.
	\end{equation}
\end{prop}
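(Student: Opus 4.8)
The plan is to obtain this as a quick consequence of the two-weight paraproduct bounds in Theorem~\ref{T:ParaprodTwoWeight}, by testing the paraproducts on indicators of dyadic cubes. The starting point is the pointwise identity $\unit_Q(b - \La b\Ra_Q) = \unit_Q(\Pi_b\unit_Q - \Pi^*_b\unit_Q)$ for $Q\in\mcd$, already recorded as \eqref{E:BMOPiPiStar}: expanding both paraproducts in the Haar basis, the terms with $P\supsetneq Q$ in $\Pi_b\unit_Q$ reproduce $-\La b\Ra_Q$ on $Q$, the terms with $P\subsetneq Q$ give exactly the Haar expansion of $(b-\La b\Ra_Q)\unit_Q$, and the diagonal term $P=Q$ is precisely cancelled on $Q$ by $\Pi^*_b\unit_Q$.

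With this identity in hand, I would invoke Theorem~\ref{T:ParaprodTwoWeight}: since $b \in BMO^2_{\mcd}(\nu)$, both $\Pi_b$ and $\Pi^*_b$ are bounded $L^p(\mu)\to L^p(\lb)$ and $L^q(\lb')\to L^q(\mu')$, each with norm $\lesssim \|b\|_{BMO^2_{\mcd}(\nu)}$. Applying these operators to the test function $\unit_Q$, and using $\|\unit_Q\|_{L^p(\mu)} = \mu(Q)^{1/p}$ and $\|\unit_Q\|_{L^q(\lb')} = \lb'(Q)^{1/q}$, gives
\begin{align*}
\|\Pi_b\unit_Q\|_{L^p(\lb)} + \|\Pi^*_b\unit_Q\|_{L^p(\lb)} &\lesssim \|b\|_{BMO^2_{\mcd}(\nu)}\, \mu(Q)^{1/p}, \\
\|\Pi_b\unit_Q\|_{L^q(\mu')} + \|\Pi^*_b\unit_Q\|_{L^q(\mu')} &\lesssim \|b\|_{BMO^2_{\mcd}(\nu)}\, \lb'(Q)^{1/q}.
\end{align*}

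Finally I would combine the two ingredients. Since restricting to $Q$ only decreases an $L^p(\lb)$-norm, the identity together with the triangle inequality gives
\begin{equation*}
\left(\int_Q |b-\La b\Ra_Q|^p\,d\lb\right)^{1/p} = \|\unit_Q(\Pi_b\unit_Q - \Pi^*_b\unit_Q)\|_{L^p(\lb)} \le \|\Pi_b\unit_Q\|_{L^p(\lb)} + \|\Pi^*_b\unit_Q\|_{L^p(\lb)} \lesssim \|b\|_{BMO^2_{\mcd}(\nu)}\, \mu(Q)^{1/p};
\end{equation*}
dividing by $\mu(Q)^{1/p}$ and taking the supremum over $Q\in\mcd$ bounds $\mathbb{B}_1^{\mcd}(b,\mu,\lb)$. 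The estimate for $\mathbb{B}_2^{\mcd}(b,\mu',\lb')$ is the identical argument carried out with the operators $\Pi_b, \Pi^*_b : L^q(\lb')\to L^q(\mu')$, whose norms are also $\lesssim \|b\|_{BMO^2_{\mcd}(\nu)}$ by Theorem~\ref{T:ParaprodTwoWeight}, using the test functions $\unit_Q$ with $\|\unit_Q\|_{L^q(\lb')} = \lb'(Q)^{1/q}$. I do not expect a genuine obstacle here: all of the analytic content already sits inside Theorem~\ref{T:ParaprodTwoWeight}, and the only steps requiring attention are the Haar-expansion verification of \eqref{E:BMOPiPiStar} and applying the restriction to $Q$ in the favorable direction.
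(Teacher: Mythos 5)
Your proof is correct and is essentially identical to the paper's: the paper also deduces the proposition from \eqref{E:BMOPiPiStar} together with the testing bounds $\|\Pi_b\unit_Q\|_{L^p(\lb)} + \|\Pi^*_b\unit_Q\|_{L^p(\lb)} \lesssim \|b\|_{BMO^2_{\mcd}(\nu)}\mu(Q)^{1/p}$ (and the $L^q(\lb')\to L^q(\mu')$ analogue) supplied by Theorem~\ref{T:ParaprodTwoWeight}. One small correction to your sketch of the identity \eqref{E:BMOPiPiStar}: the bookkeeping is not as you describe. On $Q$ one has $\Pi_b\unit_Q = \sum_{P\subseteq Q}\widehat b(P,\ep)h_P^\ep + \sum_{P\supsetneq Q}\widehat b(P,\ep)\tfrac{|Q|}{|P|}h_P^\ep(Q)$, while $\Pi^*_b\unit_Q$ consists \emph{only} of the $P\supsetneq Q$ terms $\sum_{P\supsetneq Q}\widehat b(P,\ep)\tfrac{|Q|}{|P|}h_P^\ep(Q)$ (the coefficients $\widehat{\unit_Q}(P,\ep)$ vanish for $P\subseteq Q$ by cancellation); so the subtraction kills the $P\supsetneq Q$ terms — not the diagonal term, and those terms are not $-\La b\Ra_Q$ — leaving exactly $\sum_{P\subseteq Q}\widehat b(P,\ep)h_P^\ep = (b-\La b\Ra_Q)\unit_Q$.
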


\section{Two-Weight Inequalities for Commutators with Calder\'on-Zygmund Operators}
\label{s:UpperBound}

%We now set out to prove Theorem \ref{T:UpperBound}. Define
%	$$C^{\mu,\lb,p} \defeq \Big([\mu]_{A_p}^{\max\left(1, \frac{1}{p-1}\right)} + [\lb]_{A_p}^{\max\left(1, \frac{1}{p-1}\right)}
%		\Big) \sum_{k=1}^3 C_k^{\mu,\lb,p},$$
%where $C_k^{\mu,\lb,p}$, $k = 1, 2, 3$ are the constants in Theorem \ref{T:ParaprodTwoWeight}, and
%	$$D^{\mu,\lb,p,n} \defeq  [\lb]_{A_p}^{\max\left(1, \frac{1}{p-1}\right)}
%		+  [\mu]_{A_p}^{\max\left(1, \frac{1}{p-1}\right)},$$
%where $D_k^{\mu,\lb,p,n}$, $k = 1, 2$ are the constants in Lemma \ref{L:BloomLemma}. We prove that
%	\begin{equation} \label{E:UpperBound}
%	\| [b, T]: L^p(\mu) \rightarrow L^p(\lb) \| \leq c(n, T) \big(C^{\mu,\lb,p} + D^{\mu,\lb,p,n}\big) \|b\|_{BMO^2(\nu)},
%	\end{equation}
%where $c(n, T)$ is a constant depending on $n$, $p$, and $T$.

%\subsection{Proof of Theorem \ref{T:UpperBound}}

We  prove Theorem \ref{T:UpperBound}, our upper bound on commutators. 
By the Hyt{\"o}nen Representation Theorem \ref{T:HytRep},
	\begin{equation} \label{E:UBP1}
	\La [b, T]f, g\Ra = c(n, T) \:\mathbb{E}_{\omega} \sum_{i,j=0}^{\infty} 2^{- \kappa _{i,j}\frac{\delta}{2}}
		\La [b, \mbs_{\omega}^{ij}]f, g\Ra,
	\end{equation}
for all bounded, compactly supported $f$, $g$, so it suffices to show that the commutators $[b, \mbs_{\omega}^{ij}]$ are bounded $L^p(\mu) \rightarrow L^p(\lb)$ uniformly in $\omega$.  We claim that for any choice of $\mcd^{\omega}$:
	\begin{equation} \label{E:SijUBd}
	\left\| [b, \mbs_{\omega}^{ij}] : L^p(\mu) \rightarrow L^p(\lb) \right\|   \lesssim  \kappa_{ij} \|b\|_{BMO(\nu)},
	\end{equation}
for all non-negative integers $i$, $j$, where recall that $ \kappa _{i,j}$ is defined in \eqref{e:kappa}, 
and in particular is at most linear in $ i+j$.  The linear growth in complexity in the second estimate is dominated 
by the exponential decay in complexity in the first.  Hence we conclude an upper bound on the norm of the 
commutator, completing the proof of Theorem~\ref{T:UpperBound}.

%We now prove the upper bounds for the commutators $[b, \mbs^{ij}_{\omega}]$ in \eqref{E:SijUBd} and \eqref{E:S00UBd}. 
In what follows, consider $\mcd \defeq \mcd^{\omega}$ to be fixed and simply write $\mbs^{ij}$. The commutator 
$[b, \mbs^{ij}]f = b \mbs^{ij}f  - \mbs ^{ij} (bf)$.  Expand the products  into paraproducts as in \eqref{E:ParaprodDecomp} and obtain
\begin{gather*}
[b, \mbs^{ij}]f = T_1f + T_2f + \mcr^{ij}f,
\\
\textup{where} \qquad 
T_1f \defeq (\Pi_b + \Pi^*_b + \Gamma_b)(\mbs^{ij}f), \:\:\: T_2f \defeq \mbs^{ij}(\Pi_b + \Pi^*_b + \Gamma_b)f,
\\
\textup{and} \qquad 
\mcr^{ij}f \defeq \Pi_{\mbs^{ij}f}b - \mbs^{ij} \Pi_f b.
\end{gather*}
In this equality, the principal terms are $ T_1$ and $ T_2$.  
Using \eqref{E:SijWeightedUB} and Theorem \ref{T:ParaprodTwoWeight}, we easily obtain $\|T_k : L^p(\mu) \rightarrow L^p(\lb)\|  \lesssim  \kappa_{ij}
		\|b\|_{BMO(\nu)}$ for $k=1,2$.
%and
%	$$\|T_2 : L^p(\mu) \rightarrow L^p(\lb)\| \lesssim \kappa_{ij} [\lb]_{A_p}^{\max\left(1, \frac{1}{p-1}\right)}
%		\|b\|_{BMO^2_{\mcd}(\nu)} \sum_{k=1}^3 C_{k}^{\mu,\lb,p}.$$
So we only need to analyze the remainder term $\mcr^{ij}$. In what follows, we will show that
	\begin{equation} \label{E:UBP_RijUB}
	\left\| \mcr^{ij} : L^p(\mu) \rightarrow L^p(\lb)\right\|   \lesssim  \kappa_{ij} \|b\|_{BMO(\nu)},
	\end{equation}
%whenever $(i,j) \neq (0,0)$, and that	
%	\begin{equation} \label{E:UBP_R00UB}
%	\left\| \mcr^{00} : L^p(\mu) \rightarrow L^p(\lb)\right\|  \lesssim (C^{\mu,\lb,p} + D^{\mu,\lb,p,n}) \|b\|_{BMO^2_{\mcd}(\nu)}.
%	\end{equation}
for all $i$, $j$. Then \eqref{E:SijUBd}  follows.% from these estimates and the ones for $T_1$, $T_2$.

\subsection{Remainder Estimate for $(i, j) \neq (0, 0)$}

In this case, the dyadic shift $\mbs^{ij}$ is cancellative:
	$$\mbs^{ij}f \defeq \sum_{\substack{R\in\mcd \\ \ep,\eta\not\equiv 1}} \sum_{\substack{P\in R_{(i)} \\ Q\in R_{(j)}}}
		a^{\ep\eta}_{PQR} \widehat{f}(P,\ep) h_Q^{\eta}.$$
Then for any $N \in \mcd$ and $\gamma\not\equiv 1$:
	$$\mbs^{ij}h_N^{\gamma} = \sum_{\eta\not\equiv 1} \sum_{Q\in(N^{(i)})_{(j)}} a^{\gamma\eta}_{NQN^{(i)}} h_Q^{\eta}
		\:\:\:\text{ and }\:\:\:
		\La \mbs^{ij}f, h_N^{\gamma}\Ra = \sum_{\ep\not\equiv 1} \sum_{P \in (N^{(j)})_{(i)}} a^{\ep\gamma}_{PNP^{(i)}} \widehat{f}(P,\ep).$$
These expressions give us the two terms in the remainder as
	\begin{equation} \label{E:PiSijfb}
	\Pi_{\mbs^{ij}f}b = \sum_{\substack{R\in\mcd \\ \ep,\eta\not\equiv 1}} \sum_{\substack{P\in R_{(i)} \\ Q\in R_{(j)}}}
		a^{\ep\eta}_{PQR} \widehat{f}(P,\ep)\La b\Ra_Q h_Q^{\eta}, \:\:\:\:\:\:\:
	\mbs^{ij}\Pi_fb = \sum_{\substack{R\in\mcd \\ \ep,\eta\not\equiv 1}} \sum_{\substack{P\in R_{(i)} \\ Q\in R_{(j)}}}
		a^{\ep\eta}_{PQR} \widehat{f}(P,\ep)\La b\Ra_P h_Q^{\eta}.
	\end{equation}
%------------------Remainder - Case 1 (i,j)\neq(0,0)-------------------%
%We are now ready to prove the weighted inequality for the remainder $\mcr^{ij}$ in  the cancellative case. 
From \eqref{E:PiSijfb}:
	\begin{equation} \label{E:UBP_Rij}
	\mcr^{ij}f = \sum_{\substack{R\in\mcd \\ \ep,\eta\not\equiv 1}} \sum_{\substack{P\in R_{(i)} \\ Q\in R_{(j)}}}
		a^{\ep\eta}_{PQR} \widehat{f}(P,\ep)\left( \La b\Ra_Q - \La b\Ra_P\right) h_Q^{\eta}.
	\end{equation}
The difference in the averages of $ b$ is an essential term, but the cubes $ P$ and $ Q$ above are just descendants of $R$.
They need not intersect.  

\smallskip 

We continue our analysis of the remainder in terms of the relative sizes of $i$ and $j$, but the cases of $ i\leq j$ and 
$ j \leq i$ are dual, and so we only consider the former. 
Each $Q\in R_{(j)}$ is contained in a unique $N \in R_{(i)}$, and then $Q \in N_{(j-i)}$. (Note that $N = Q$ if $i=j$.) Rewrite $\mcr^{ij}f$ by grouping the $Q$'s this way:
	$$\mcr^{ij}f = \sum_{\substack{R\in\mcd\\ \ep,\eta\not\equiv 1}} \sum_{P, N \in R_{(i)}} \widehat{f}(P,\ep)
		\sum_{Q\in N_{(j-i)}} a_{PQR}^{\ep\eta} \big(\La b\Ra_{Q} - \La b\Ra_P\big) h_Q^{\eta},$$
and write
	$$\La b\Ra_{Q} - \La b\Ra_P = (\La b\Ra_{Q} - \La b\Ra_N) + \left(\La b\Ra_{N} - \La b\Ra_R\right) + 
		\left(\La b\Ra_{R} - \La b\Ra_P\right).$$
Note that the first term disappears if $i=j$, and also the expansion  \eqref{E:AvgDifference} applies to each of the terms in the parentheses above. The remainder is the sum of three terms. 
	\begin{align*}
	\mcr^{ij}f = & \sum_{k=1}^{j-i} \sum_{\substack{R\in\mcd \\ \ep,\eta,\gamma\not\equiv 1}} \sum_{\substack{P\in R_{(i)} \\ N\in R_{(j-k)}}}
		\widehat{f}(P,\ep) \widehat{b}(N,\gamma) \sum_{Q\in N_{(k)}} a_{PQR}^{\ep\eta} h_N^{\gamma} (Q) h_Q^{\eta}\\
		+ & \sum_{k=1}^{i} \sum_{\substack{R\in\mcd \\ \ep,\eta,\gamma\not\equiv 1}} \sum_{\substack{P\in R_{(i)} \\ N\in R_{(i-k)}}}
		\widehat{f}(P,\ep) \widehat{b}(N,\gamma) \sum_{Q\in N_{(j-i+k)}} a_{PQR}^{\ep\eta} h_N^{\gamma} (Q) h_Q^{\eta}\\
		- & \sum_{k=1}^i \sum_{\substack{R\in\mcd \\ \ep,\eta,\gamma\not\equiv 1}} \sum_{\substack{N\in R_{(i-k)} \\ Q\in R_{(j)}}}
			\left(\sum_{P\in N_{(k)}} \widehat{f}(P,\ep) h_N^{\gamma}(P)\right) \widehat{b}(N,\gamma) a^{\ep\eta}_{PQR} h_Q^{\eta}.
	\end{align*}
We relabel the second term by replacing $k$ with $k-j+i$, and then combine it with the first term. Finally, we may write
	\begin{equation} \label{E:UBP_RijAB}
	\mcr^{ij}f = \sum_{k=1}^j A_kf - \sum_{k=1}^i B_kf,
	\end{equation}
where:
	$$A_k f \defeq \sum_{\substack{R\in\mcd \\ \ep,\eta,\gamma\not\equiv 1}} \sum_{\substack{P\in R_{(i)} \\ N\in R_{(j-k)}}}
		\widehat{f}(P,\ep) \widehat{b}(N,\gamma) \sum_{Q\in N_{(k)}} a_{PQR}^{\ep\eta} h_N^{\gamma} (Q) h_Q^{\eta},$$
and
	$$B_k f \defeq \sum_{\substack{R\in\mcd \\ \ep,\eta,\gamma\not\equiv 1}} \sum_{\substack{N\in R_{(i-k)} \\ Q\in R_{(j)}}}
			\left(\sum_{P\in N_{(k)}} a^{\ep\eta}_{PQR} \widehat{f}(P,\ep) h_N^{\gamma}(P)\right) \widehat{b}(N,\gamma) h_Q^{\eta}.$$

It suffices to prove that $\left\Vert A_k: L^2(\mu)\to L^2(\lambda)\right\Vert+\left\Vert B_k: L^2(\mu)\to L^2(\lambda)\right\Vert  \lesssim  \|b\|_{BMO(\nu)}$, because then from \eqref{E:UBP_RijAB} we obtain:
$$
\left\Vert \mcr^{ij} : L^2(\mu)\to L^2(\lambda)\right\Vert \leq j \left\Vert A_k: L^2(\mu)\to L^2(\lambda)\right\Vert+ i\left\Vert B_k: L^2(\mu)\to L^2(\lambda)\right\Vert \lesssim  \kappa_{ij} \|b\|_{BMO(\nu)},
$$
which is nothing other than \eqref{E:UBP_RijUB}.  We now turn to computing the norms of $A_k$ and $B_k$.

We begin with $A_k$ and again proceed by duality. We let $f \in L^p(\mu)$ and $g\in L^q(\lb')$ and  
appeal to $ H ^{1}$-$BMO$ duality, as expressed in \eqref{E:NuH1BMODuality}, to get the $ BMO _{\nu }$ norm.  
	$$|\La A_kf, g\Ra| = |\La b, \Phi\Ra| \lesssim [\nu]_{A_2} \|b\|_{BMO^2_{\mcd}(\nu)} \|S_{\mcd}\Phi\|_{L^1(\nu)},$$
where, as before, $ \Phi $ is a bilinear expression involving $ f$ and $ g$. 
	$$\Phi \defeq \sum_{\substack{R\in\mcd \\ \ep,\eta,\gamma\not\equiv 1}} \sum_{\substack{P\in R_{(i)} \\ N\in R_{(j-k)}}} 
		\widehat{f}(P,\ep) \left(\sum_{Q\in N_{(k)}} a^{\ep\eta}_{PQR} h_N^{\gamma}(Q) \widehat{g}(Q,\eta)\right)h_N^{\gamma}.$$
This is a Haar series, and we pass to its square function, summing over the cubes $ N$.  
	$$(S_{\mcd}\Phi)^2 \lesssim 2^{-n(i+j)} \sum_{\substack{N\in\mcd\\ \ep,\eta\not\equiv 1}} \left( 
		\sum_{P\in(N^{(j-k)})_{(i)}} |\widehat{f}(P,\ep)| \sum_{Q\in N_{(k)}} \frac{1}{\sqrt{|N|}} |\widehat{g}(Q,\eta)| \right)^2 
		\frac{\unit_N}{|N|}.$$
The term $ 2 ^{-n (i+j)}$ comes from the decay of the Haar shift coefficients.  The sum involving $ g$ is bounded by 
	$$\sum_{Q\in N_{(k)}} \frac{1}{\sqrt{|N|}} |\widehat{g}(Q, \eta)| \leq \frac{2^{\frac{kn}{2}}}{|N|} 
		\sum_{Q\in N_{(k)}} \int_Q |g|\,dx = 2^{\frac{kn}{2}} \La |g|\Ra_N.$$
So the square function is bounded by 
	\begin{align*}
	(S_{\mcd} \Phi )^2 & \lesssim 2^{-n(i+j-k)} (Mg)^2 \sum_{N\in\mcd, \ep\not\equiv 1} \left( \sum_{P \in (N^{(j-k)})_{(i)}} |\widehat{f}(P,\ep)| \right)^2 \frac{\unit_N}{|N|}\\
		& = 2^{-n(i+j-k)} (Mg)^2 \left(\widetilde{S_{\mcd}}^{i,j-k}f\right)^2.
	\end{align*}
The maximal function is controlled by Muckenhoupt's bound, and the square function by the estimate \eqref{E:ShiftedDSFUB}. We have 
	\begin{align*}
	\|S_{\mcd}f\|_{L^1(\nu)} & \lesssim 2^{-\frac{n}{2}(i+j-k)} \int_{\R^n} (Mg) (\widetilde{S_{\mcd}}^{i,j-k}f) \,d\nu &\\
		& \leq 2^{-\frac{n}{2}(i+j-k)} \|Mg\|_{L^q(\lb')} \left\| \widetilde{S_{\mcd}}^{i,j-k}f \right\|_{L^p(\mu)} &\\
		& \lesssim 2^{-\frac{n}{2}(i+j-k)} \|g\|_{L^q(\lb')} 2^{\frac{n}{2}(i+j-k)} 
			\|f\|_{L^p(\mu)} 
 =  \|f\|_{L^p(\mu)} \|g\|_{L^q(\lb')}.  
	\end{align*}
The completes the proof of 
	$\|A_k : L^p(\mu) \rightarrow L^p(\lb)\|  \lesssim  \|b\|_{BMO(\nu)}$.

	\smallskip 

Similarly for $B_k$:
	$$|\La B_kf, g\Ra| = |\La b, \Phi\Ra| \lesssim  \|b\|_{BMO^2_{\mcd}(\nu)} \|S_{\mcd}\Phi\|_{L^1(\nu)},$$
where 
	$$\Phi \defeq \sum_{\substack{R\in\mcd \\ \ep,\eta,\gamma\not\equiv 1}} \sum_{\substack{N\in R_{(i-k)} \\ Q\in R_{(j)}}} 
	\left(\sum_{P\in N_{(k)}} \widehat{f}(P, \ep) h_N^{\gamma}(P) a_{PQR}^{\ep\eta}\right) \widehat{g}(Q, \eta) h_N^{\gamma}.$$
The analysis of the square function $ S _{\mathcal D} \Phi $ is symmetric with respect to the roles of $ f$ and $ g$. 
The proof is analogous, and so omitted.  

%----------------------end of Rij proof case 1---------------%
%---------------------------------REMAINDER CASE 2: i=j=0-----------------------------------------------%

\subsection{Remainder Estimate for $i = j = 0$} \label{Ss:00Remainder}

A precise analysis of the case $i=j=0$ in Theorem \ref{T:HytRep} is given in \cite{HytRepOrig}, where it is shown that $\mbs^{00}$ is of the form
	$$\mbs^{00} = \mbs^{00}_c + \Pi_a + \Pi^*_d,$$
where $\mbs^{00}_c$ is a \textit{cancellative} dyadic shift with parameters $(0,0)$, and $\Pi_a$, $\Pi^*_d$ are paraproducts with symbols $a, d \in BMO_{\mcd}$ with 
	$\|a\|_{BMO_{\mcd}} \leq 1; \:\: \|d\|_{BMO_{\mcd}} \leq 1$.  
(The definition of the paraproduct is in \eqref{e:para}.)	
Here $BMO_{\mcd}$ denotes the \textit{unweighted} dyadic $BMO$ space. The functions $a$ and $d$ come from the T1 theorem of David-Journ{\'e}.  The remainder $\mcr^{00}$ then has the form
	$$\mcr^{00} = \mcr^{00}_c  + \mcr_a  + \mcr^*_d,$$
where
%$$\mcr^{00}_c f \defeq \Pi_{\mbs_c^{00}f}b - \mbs_c^{00}\Pi_f b,$$
$$\mcr^{00}_c f \defeq \Pi_{\mbs_c^{00}f}b - \mbs_c^{00}\Pi_f b; \:\:\:\:\: \mcr_a f \defeq \Pi_{\Pi_a f}b - \Pi_a\Pi_f b; \:\:\:\:\:
	\mcr^*_d f \defeq \Pi_{\Pi^*_d f}b - \Pi^*_d\Pi_f b.$$
Now, $\mbs^{00}_c$ is cancellative, so
	$$\mbs^{00}_c f = \sum_{R\in\mcd}\sum_{\ep,\eta\not\equiv 1} a_R^{\ep\eta} \widehat{f}(R,\ep) h_R^{\eta},$$
for some $|a_R^{\ep\eta}| \leq 1$. It is easy to check that
	$$\Pi_{\mbs_c^{00}f}b = \mbs_c^{00}\Pi_f b = \sum_{R\in\mcd} \sum_{\ep,\eta\not\equiv 1}
		a_R^{\ep\eta} \widehat{f}(R,\ep)\La b\Ra_{R} h_R^{\eta}.$$
So the term $\mcr^{00}_c = 0$, and we only need to look at $\mcr_a$ and $\mcr^*_d$.

We recall the  $ A_p$ bounds for paraproduct operators, which  is classical. Namely, for 
$a\in BMO_{\mcd}$ and a weight $w\in A_p$ with $1<p<\infty$ we have:
	\begin{equation} \label{E:ParaprodBMOd}
	\left\|\Pi_a f\right\|_{L^p(w)} \lesssim  \|a\|_{BMO_{\mcd}} \|f\|_{L^p(w)}.
	\end{equation}
%---------------Analysis of R_a------------------%
Let us look at the term $\mcr_a f$:
%	$$\Pi_{\Pi_a f}b = \sum_{\stackrel{P\in\mcd}{\ep\not\equiv 1}} \widehat{a}(P,\ep) \left(
%		\sum_{\stackrel{R\supsetneq P}{\eta\not\equiv 1}} \widehat{f}(R,\eta) \La b\Ra_P h_R^{\eta}(P)\right)h_P^{\ep},$$
%	$$\Pi_a\Pi_f b = \sum_{\stackrel{P\in\mcd}{\ep\not\equiv 1}} \widehat{a}(P,\ep) \left(
%		\sum_{\stackrel{R\supsetneq P}{\eta\not\equiv 1}} \widehat{f}(R,\eta) \La b\Ra_R h_R^{\eta}(P)\right)h_P^{\ep}.$$
	$$\mcr_a f = \sum_{Q\in\mcd,\ep\not\equiv 1} \widehat{a}(Q,\ep) h_Q^{\ep}
		\sum_{R\supsetneq Q,\eta\not\equiv 1} \widehat{f}(R,\eta) \left[\La b\Ra_Q - \La b\Ra_R\right] h_R^{\eta}(Q).$$
We write
	$$\La b\Ra_Q - \La b\Ra_R = \sum_{\substack{N \in \mcd \\ Q \subsetneq N \subseteq R}} \sum_{\gamma\not\equiv 1} \widehat{b}(N,\gamma) h_N^{\gamma}(Q),$$
and express $\mcr_a f$ as a sum of three terms, called $A$, $B$, and $C$, which we analyze separately. Specifically, we look at the cases $N \subsetneq R$, $N = R$ with $\gamma\neq\eta$ and $N=R$ with $\gamma=\eta$. For the first case:
	\begin{align*}
	A & \defeq \sum_{Q\in\mcd,\ep\not\equiv 1} \widehat{a}(Q,\ep) h_Q^{\ep}
		\sum_{R\supsetneq Q,\eta\not\equiv 1} \sum_{Q \subsetneq N\subsetneq R} \sum_{\gamma\not\equiv 1}
		\widehat{b}(N,\gamma) \widehat{f}(R,\eta) h_N^{\gamma}(Q) h_R^{\eta}(Q) \\
	&= \sum_{Q\in\mcd,\ep\not\equiv 1} \widehat{a}(Q,\ep) h_Q^{\ep}
		 \sum_{N\supsetneq Q,\gamma\not\equiv 1} \widehat{b}(N,\gamma) \La f\Ra_N h_N^{\gamma}(Q) \\
	&= \sum_{Q\in\mcd,\ep\not\equiv 1} \widehat{a}(Q,\ep) \La\Pi_bf\Ra_Q h_Q^{\ep} = \Pi_a\Pi_bf.
	\end{align*}
The second case similarly gives:
	\begin{align*}
	B & \defeq \sum_{Q\in\mcd,\ep\not\equiv 1} \widehat{a}(Q,\ep) h_Q^{\ep}
		\sum_{R\supsetneq Q,\eta\not\equiv 1} \sum_{\gamma\not\equiv 1,\gamma\neq\eta}
		\widehat{b}(R,\gamma) \widehat{f}(R,\eta) h_R^{\gamma}(Q) h_R^{\eta}(Q)\\
	& = \sum_{Q\in\mcd,\ep\not\equiv 1} \widehat{a}(Q,\ep) \La \Gamma_bf\Ra_Q h_Q^{\ep} = \Pi_a\Gamma_bf.
	\end{align*}
Here, $ \Gamma _b $ is defined in \eqref{e:gamma}.  Finally, the case $N=R, \gamma=\eta$ yields:
	\begin{align}
	C & \defeq \sum_{Q\in\mcd,\ep\not\equiv 1} \widehat{a}(Q,\ep) h_Q^{\ep} 
		\sum_{R\supsetneq Q,\eta\not\equiv 1} \widehat{b}(R,\eta) \widehat{f}(R,\eta)	\frac{1}{|R|}\\
	& = \sum_{Q\in\mcd,\ep\not\equiv 1} \widehat{a}(Q,\ep) h_Q^{\ep} \left(
		\La\Pi_b^*f\Ra_Q  - \frac{1}{|Q|}\sum_{P\subseteq Q, \eta\not\equiv 1} \widehat{b}(P,\eta) \widehat{f}(P, \eta)	\right)\\
	& = \Pi_a\Pi_b^*f - \Lambda_{a,b}f,
\\  \textup{where} \qquad 
\label{E:Lambda_ab}
	\Lambda_{a, b}f &\defeq \sum_{Q\in\mcd,\ep\not\equiv 1} \widehat{a}(Q,\ep) \frac{1}{|Q|} \left(
	\sum_{P\subseteq Q,\eta\not\equiv 1} \widehat{b}(P,\eta) \widehat{f}(P, \eta) \right)  h_Q^{\ep}.
	\end{align}
In summary
	\begin{equation} \label{E:Raf}
	\mcr_a  = \Pi_a\Pi_b  + \Pi_a\Gamma_b  + \Pi_a \Pi^*_b  - \Lambda_{a, b}.
	\end{equation}
A similar analysis of $\mcr_d^*$ shows that
	\begin{align} \label{E:Rd*f}
	\mcr_d^* &= \Lambda_{d,b}^* - \Pi_b\Pi_d^* - \Gamma_b\Pi_d^* - \Pi^*_b\Pi_d^*,
\\  \textup{where} \qquad 
 \label{E:Lambda*_db}
	\Lambda_{d,b}^*f &\defeq \sum_{Q\in\mcd,\ep\not\equiv 1} \widehat{d}(Q,\ep) \widehat{f}(Q,\ep)\frac{1}{|Q|}
		\left(\sum_{P\subseteq Q, \eta\not\equiv 1} \widehat{b}(P,\eta)h_P^{\eta}\right).
\end{align}
We need to compute the $ L ^{p} (\nu )$-norm for the terms in \eqref{E:Raf} and \eqref{E:Rd*f}. 
For the terms that involve the composition of paraproducts and $ \Gamma $, first use the 
two-weight inequalities of Theorem~\ref{T:ParaprodTwoWeight}, and then \eqref{E:ParaprodBMOd}. 
It remains to show that $\Lambda_{a,b}$ is bounded.

\begin{lm}\label{L:LambdaBounds}
Let $a \in BMO_{\mcd}$ and $b \in BMO_{\mcd}^2(\nu)$. These inequalities hold. 
	\begin{align} \label{E:LambdaBound}
	\left\|\Lambda_{a,b} : L^p(\mu) \rightarrow L^p(\lb) \right\|& \lesssim 
\|a\|_{BMO_{\mcd}} \|b\|_{BMO^2_{\mcd}(\nu)},
\\ \label{E:Lambda*Bound}
	\left\|\Lambda^*_{a,b} : L^p(\mu) \rightarrow L^p(\lb) \right\|  &\lesssim 
	\|a\|_{BMO_{\mcd}} \|b\|_{BMO^2_{\mcd}(\nu)} . 
	\end{align}
\end{lm}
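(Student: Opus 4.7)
The plan is to prove \eqref{E:LambdaBound} directly and deduce \eqref{E:Lambda*Bound} by duality. A short Haar-coefficient calculation shows $\Lambda_{a,b}^*$ is the unweighted $L^2$ adjoint of $\Lambda_{a,b}$, so $\|\Lambda_{a,b}^*:L^p(\mu)\to L^p(\lb)\| = \|\Lambda_{a,b}:L^q(\lb')\to L^q(\mu')\|$, and since Bloom's weight $\nu=\mu^{1/p}\lb^{-1/p}$ is invariant under the swap $(\mu,\lb,p)\leftrightarrow(\lb',\mu',q)$, the same argument covers \eqref{E:Lambda*Bound}.

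To prove \eqref{E:LambdaBound}, I will fix $f\in L^p(\mu)$ and $g\in L^q(\lb')$ and pair on $b$ using the Bloom $H^1$-$BMO$ inequality \eqref{E:NuH1BMODuality}. Applying \eqref{E:BMOHaarExp} to rewrite $\sum_{P\subseteq Q,\eta\neq 1}\widehat{b}(P,\eta)\widehat{f}(P,\eta) = \int_Q(b-\langle b\rangle_Q)f\,dx$ and swapping the order of summation in $\langle\Lambda_{a,b}f,g\rangle$ yields $\langle\Lambda_{a,b}f,g\rangle = \langle b,\Psi\rangle$, with
$$\Psi := \sum_{P,\eta\neq 1}\widehat{f}(P,\eta)\,\Phi(P)\,h_P^\eta,\qquad \Phi(P):=\sum_{Q\supseteq P,\,\ep\neq 1}\widehat{a}(Q,\ep)\widehat{g}(Q,\ep)/|Q|.$$
Invoking \eqref{E:NuH1BMODuality} reduces matters to bounding $\|S_\mcd\Psi\|_{L^1(\nu)}$ by $\|a\|_{BMO_\mcd}\|f\|_{L^p(\mu)}\|g\|_{L^q(\lb')}$.

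The decisive step is a pointwise estimate on $\Phi(P)$. Computing $\langle\Pi_a^*g\rangle_P$, splitting its defining sum into cubes above and below $P$, and using \eqref{E:BMOHaarExp} for $a$, yields the identity
$$\Phi(P) \;=\; \langle\Pi_a^*g\rangle_P \;-\; \langle(a-\langle a\rangle_P)g\rangle_P \;+\; \sum_{\ep\neq 1}\widehat{a}(P,\ep)\widehat{g}(P,\ep)/|P|.$$
For $x\in P$, the first summand is $\leq M(\Pi_a^*g)(x)$; the remaining two are $\lesssim \|a\|_{BMO_\mcd}M_{L^r}g(x)$, where $M_{L^r}g := (M|g|^r)^{1/r}$ --- the second by H\"older's inequality with exponents $(r',r)$ and John--Nirenberg (the unweighted analogue of Lemma~\ref{l:MW}: $\|a\|_{BMO^{r'}_\mcd}\lesssim \|a\|_{BMO_\mcd}$), and the third by the elementary bound $|\widehat{a}(P,\ep)|\lesssim \|a\|_{BMO_\mcd}|P|^{1/r'}$. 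Here $r>1$ is chosen close enough to $1$ that $\lb'\in A_{q/r}$; this is possible by the self-improvement of the $A_p$ classes. Since the bound is uniform in $P\ni x$,
$$S_\mcd\Psi(x)\;\lesssim\;\bigl(M(\Pi_a^*g)+\|a\|_{BMO_\mcd}\,M_{L^r}g\bigr)(x)\cdot S_\mcd f(x).$$

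The conclusion then follows from H\"older's inequality with the factorization $\nu=\mu^{1/p}\lb^{-1/p}$:
$$\|S_\mcd\Psi\|_{L^1(\nu)}\lesssim \|S_\mcd f\|_{L^p(\mu)}\cdot\bigl\|M(\Pi_a^*g)+\|a\|_{BMO_\mcd}M_{L^r}g\bigr\|_{L^q(\lb')}.$$
The first factor is $\lesssim \|f\|_{L^p(\mu)}$ by \eqref{e:SQ}; the second is $\lesssim \|a\|_{BMO_\mcd}\|g\|_{L^q(\lb')}$ by \eqref{E:MaxFWeight}, the paraproduct bound \eqref{E:ParaprodBMOd}, and the $L^{q/r}(\lb')$-boundedness of $M$. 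The main obstacle is the pointwise control of $\Phi(P)$: one must extract the $\|a\|_{BMO_\mcd}$ factor without assuming $a\in BMO(\nu)$, and this forces the use of an $L^r$ maximal function for some $r>1$ strictly, legitimized by the openness of $A_p$.
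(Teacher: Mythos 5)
Your proof is correct and follows the same overall route as the paper: dualize, rewrite $\langle\Lambda_{a,b}f,g\rangle$ as $\langle b,\Phi\rangle$ with exactly the same bilinear Haar series $\Phi$, apply the Bloom $H^1$--$BMO$ duality \eqref{E:NuH1BMODuality}, control the coefficients $\Phi(P)$ pointwise by maximal functions, and finish with H\"older against the factorization $\nu=\mu^{1/p}\lb^{-1/p}$. The one place where you genuinely diverge is the pointwise control of $\Phi(P)$. The paper exploits the unconditionality of the (dyadic, square-function) $BMO$ norms under sign changes of Haar coefficients: after flipping signs of $\widehat a(Q,\ep)$ so that every product $\widehat a(Q,\ep)\widehat g(Q,\ep)$ is non-negative, the tail $\frac{1}{|Q|}\sum_{Q'\subsetneq P}\widehat a(Q',\ep)\widehat g(Q',\ep)$ is non-negative and one gets $\Phi(P)\le\langle\Pi_a^*g\rangle_P\le\inf_{x\in P}M(\Pi_a^*g)(x)$ at a stroke, so only the ordinary maximal function and \eqref{E:ParaprodBMOd} are needed. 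You instead compute the exact discrepancy between $\Phi(P)$ and $\langle\Pi_a^*g\rangle_P$ and absorb it via John--Nirenberg and an $L^r$ maximal function with $r>1$ chosen by the openness of $A_q$; this is correct (your identity for $\Phi(P)$ checks out against \eqref{E:BMOHaarExp}) and has the merit of avoiding the sign-flipping argument, at the modest cost of invoking the self-improvement of $A_p$. One small imprecision: with the $L^2$-normalized Haar functions the elementary bound is $|\widehat a(P,\ep)|\lesssim\|a\|_{BMO_{\mcd}}|P|^{1/2}$ rather than $|P|^{1/r'}$, but paired with $|\widehat g(P,\ep)|\le|P|^{1/2}\langle|g|\rangle_P$ the third term is still $\lesssim\|a\|_{BMO_{\mcd}}\langle|g|\rangle_P$, so your conclusion is unaffected. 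Your duality reduction of \eqref{E:Lambda*Bound} to \eqref{E:LambdaBound} via the swap $(\mu,\lb,p)\leftrightarrow(\lb',\mu',q)$, under which $\nu$ is invariant, is also valid and matches what the paper leaves implicit.
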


\begin{proof} The proof of the first set of inequalities is given, with the other set following by similar reasoning.  
We argue by duality, so that the appeal to weighted $ H ^{1}$-$BMO$ duality is easy.  For $ f\in L ^{p} (\mu )$, and $ g\in L ^{p'} (\lambda' )$, we have 
\begin{align*}
\langle  \Lambda _{a,b} f, g\rangle 
&=
\sum_{Q\in\mcd,\ep\not\equiv 1} \widehat{a}(Q,\ep) \frac{1}{|Q|}  
	\sum_{P\subseteq Q,\eta\not\equiv 1} \widehat{b}(P,\eta) \widehat{f}(P, \eta) \widehat g( Q, \epsilon )
\\
&= \sum_{P\in\mcd, \eta\not\equiv 1}  \widehat{b}(P,\eta)   \widehat{f}(P, \eta)  \Psi _P 
\\
\textup{where} \qquad 
\Psi _P &\defeq   
\sum_{Q \supseteq P, \ep\not\equiv 1} \widehat{a}(Q,\ep) \frac{1}{|Q|}  \widehat g( Q, \epsilon ). 
\end{align*}
Recall that if we multiply the coefficients $ \widehat{a}(Q,\ep)$ by choices of signs, we do not increase 
the $BMO$ norm of $ a$.  The same remark applies to $ b \in BMO (\nu )$.  Therefore, since $ f$ and $ g$ are 
fixed, we are free to assume that \emph{each individual summand above is non-negative.} This only requires 
that we modify the Haar wavelet expansions of $ b$ and $ a$ by choices of signs, but this fact is suppressed in the notation. 

The key fact that this gives us is a control of the terms $ \Psi _P$, namely 
\begin{equation*}
\Psi _P \leq \langle   \Pi _{a} ^{\ast}  g\rangle_P \leq \inf _{x\in P} M (\Pi _{a} ^{\ast} g) (x). 
\end{equation*}
Here, we are using the adjoint paraproduct of applied to $ g$, as defined in \eqref{e:para}.  
And, then, using the weighted $ H ^{1}$-$BMO$ duality, as expressed in \eqref{E:H1BMODual1}, we have
\begin{align*}
\left\vert \langle  \Lambda _{a,b} f,  \Phi \rangle \right\vert &\leq  
\|b\|_{BMO^2_{\mcd}(\nu)} \|S \Phi\|_{L^1(\nu )}, 
 \\  
  \textup{where} \qquad 
  \Phi & \defeq \sum_{P\in\mcd, \eta\not\equiv 1}  \widehat{f}(P, \eta)  \Psi _P  h ^{\epsilon } _{P}.  
\end{align*}
And, last of all, using the definition of $ \mu $ and H\"older's inequality, and weighted inequalities for the maximal 
function and paraproduct operators, 
\begin{align*}
\|S \Phi\|_{L^1(\nu )} & \lesssim   \int  S f  \cdot M (\Pi _{a} ^{\ast} g)   \; \mu ^{\frac 1p} \lambda ^{- \frac 1p} \; dx 
\\
& \leq \lVert S f\rVert _{L ^{p} (\mu )}  \lVert  M (\Pi _{a} ^{\ast} g) \rVert _{L ^{p'} (\lambda' )} 
\\
& \lesssim  \|a\|_{BMO_{\mcd}} \lVert f\rVert _{L ^{p} (\mu )} \lVert g\rVert _{L ^{p'} (\lambda ') }.  
\end{align*} 

\end{proof}

Now we may combine the results in \eqref{E:LambdaBound} and \eqref{E:Lambda*Bound} with the rest of the terms in \eqref{E:Raf} and \eqref{E:Rd*f}, which are controlled by \eqref{E:ParaprodBMOd} and Theorem \ref{T:ParaprodTwoWeight}, and obtain:
	$\|\mcr_a : L^p(\mu) \rightarrow L^p(\lb) \|  \lesssim  \|b\|_{BMO(\nu)}$,
and 
	$\|\mcr^*_d : L^p(\mu) \rightarrow L^p(\lb) \|  \lesssim  \|b\|_{BMO(\nu)}.$
Then \eqref{E:UBP_RijUB} for the non-cancellative case follows.

%--------------------------------------------------Lower Bound-------------------------------------------%

\subsection{Characterization of Bloom $BMO$ by Commutators with the Riesz Transforms}

In this section we prove Theorem \ref{T:LowerBound}.  Note that the first part follows directly from Theorem \ref{T:UpperBound}, and so it only remains to prove the lower bound.  Suppose 
	$$\left\|[b, R_i] : L^p(\mu) \rightarrow L^p(\lb)\right\| < \infty, \: i = 1,\ldots,n,$$
where $R_i$ are the Riesz transforms. Then, since $[b, R_iR_j] = [b, R_i]R_j + R_i[b, R_j]$ and $R_i : L^r(w) \rightarrow L^r(w)$ is bounded for all $i = 1, \ldots,n$ and all $w \in A_r$ with $1<r<\infty$ \cite{PetermichlRiesz}, we have that $[b, K]$ is bounded $L^p(\mu) \rightarrow L^p(\lb)$ for all $K$ that are polynomials in the Riesz transforms and the norm of this operator is at most $\displaystyle\sum_{i=1}^n \left\|[b, R_i] : L^p(\mu) \rightarrow L^p(\lb)\right\|$.

We employ the standard computation in \cite{CRW}.  Let $\{Y_k\}$ be an orthonormal basis for the space of spherical harmonics of degree $n$. Then
	$$\sum_{k} |Y_k(x)|^2 = c_n |x|^{2n},$$
and, by homogeneity,
	$$Y_k(x-y) = \sum_{|\alpha| + |\beta| = n} a_{\alpha\beta}^k x^{\alpha} y^{\beta},$$
where we are using standard multi-index notation. As shown in \cite{CRW}:
	\begin{equation} \label{E:CRW}
	|Q|\left| (b - \La b\Ra_Q)\unit_Q \right| (x) = \frac{1}{c_n} \sum_{k,\alpha,\beta} a_{\alpha\beta}^k x^{\alpha} \Gamma_Q(x)
		\left([b, R^{(k)}]y^{\beta}\unit_Q(y)\right)(x),
	\end{equation}
for all cubes $Q$ centered at the origin, where $\Gamma_Q \defeq \unit_Q \text{sgn}(b - \La b\Ra_Q)$ and $R^{(k)}$ is the polynomial in the Riesz transforms associated with $Y_k(x)|x|^{-n}$.  Note that, since $Q$ is centered at the origin
	\begin{equation}
	\label{E:reallyobviousestimate}
	|t^{\alpha}| \lesssim l(Q)^{|\alpha|},
	\end{equation}
for all $t\in Q$. Then from \eqref{E:CRW} and \eqref{E:reallyobviousestimate}:
	\begin{align*}
	|Q| \left(\int_Q |b - \La b\Ra_Q|^p\,d\lb\right)^{\frac{1}{p}} & \lesssim 
		\sum_{k,\alpha,\beta} l(Q)^{|\alpha|} \left\|[b, R^{(k)}]y^{\beta}\unit_Q(y)\right\|_{L^p(\lb)}\\
	&  \lesssim  \sum_{k, \alpha, \beta} l(Q)^{|\alpha|} \left\|[b, R^{(k)}] : L^p(\mu) \rightarrow L^p(\lb)\right\|
		\left\|y^{\beta}\unit_Q(y)\right\|_{L^p(\mu)}\\
	&  \lesssim  \sum_{\alpha,\beta} l(Q)^{|\alpha|} \left(\sum_{i=1}^n \left\|[b, R_i] : L^p(\mu) \rightarrow L^p(\lb)\right\|\right) l(Q)^{|\beta|} \mu(Q)^{\frac{1}{p}}\\
	&  \lesssim  |Q| \left(\sum_{i=1}^n \left\|[b, R_i] : L^p(\mu) \rightarrow L^p(\lb)\right\|\right) \mu(Q)^{\frac{1}{p}}.
	\end{align*}
Since the argument is translation-invariant, we may conclude that
	$$\left(\frac{1}{\mu(Q)} \int_Q |b - \La b\Ra_Q|^p\,d\lb\right)^{\frac{1}{p}}  \lesssim  \sum_{i=1}^n \left\|[b, R_i] : L^p(\mu) \rightarrow L^p(\lb)\right\|,$$
for all cubes $Q \subset \R^n$. Then
	$$\mathbb{B}_1^{\mcd}(b, \mu, \lb)  \lesssim  \sum_{i=1}^n \left\|[b, R_i] : L^p(\mu) \rightarrow L^p(\lb)\right\|$$
for all dyadic grids $\mcd$ on $\R^n$. By Theorem \ref{T:BloomEquiv},
	\begin{equation} \label{E:LBdPf1}
	b \in BMO_{\mcd}(\nu) \text{ with } \|b\|_{BMO_{\mcd}(\nu)}  \lesssim  \sum_{i=1}^n \left\|[b, R_i] : L^p(\mu) \rightarrow L^p(\lb)\right\|,
	\end{equation}
for all $\mcd$.

To see how this implies that $b \in BMO(\nu)$, recall that there exist $2^n$ dyadic grids $\mcd^{\omega}$ such that for any cube $Q \subset \R^n$ there is $Q^{\omega} \in \mcd^{\omega}$ such that
	$$Q \subset Q^{\omega} \text{ and } l(Q^{\omega}) \leq 6l(Q).$$
See the proof in \cite{HytSharpRH}*{Proof of Theorem 1.10}. %Then from 
%	$$\int_{Q^{\omega}} |b - \La b\Ra_{Q^{\omega}}| \,dx  \lesssim  \Big(\sum C_i\Big) \nu(Q^{\omega}).$$
Now from \eqref{E:LBdPf1}
	$$|\La b\Ra_{Q^{\omega}} - \La b\Ra_Q| \leq \frac{1}{|Q|} \int_Q |b - \La b\Ra_{Q^{\omega}}|\,dx  \lesssim  \frac{\nu(Q^{\omega})}{|Q|} \sum_{i=1}^n \left\|[b, R_i] : L^p(\mu) \rightarrow L^p(\lb)\right\|,$$
so
	$$\int_Q |b - \La b\Ra_Q|\,dx  \lesssim  \nu(Q^{\omega})\sum_{i=1}^n \left\|[b, R_i] : L^p(\mu) \rightarrow L^p(\lb)\right\|.$$
But, using the doubling property of $A_p$ weights
	$$\nu(Q^{\omega}) \leq [\nu]_{A_2} \left(\frac{|Q^{\omega}|}{|Q|}\right)^2\nu(Q)  \lesssim  \nu(Q),$$
hence
	$$\sup_{Q} \left(\frac{1}{\nu(Q)} \int_Q |b - \La b\Ra_Q|\,dx\right)  \lesssim  \sum_{i=1}^n \left\|[b, R_i] : L^p(\mu) \rightarrow L^p(\lb)\right\|,$$
which is exactly the lower bound in \eqref{e:Requivalence}.

%----------------------------FACTORIZATION-------------------------------------------------%

\subsection{Proof of Corollary \ref{C:Factorization}}
The first part of the proof will use the following duality statement, which can be found in \cite{GCDuality}: $(H^1(\nu))^* \equiv BMO^2(\nu)$, in the sense that every element of $(H^1(\nu))^*$ is of the form
	$$\Lambda_b : H^1(\nu) \ni h \mapsto \Lambda_bh \defeq \int_{\mathbb{R}^n} b(x)h(x)\,dx,$$
for a unique $b \in BMO^2(\nu)$, with $\|b\|_{BMO^2(\nu)}  \lesssim  \|\Lambda_b\|$. In terms of maximal functions, the weighted Hardy space $H^1(\nu)$ is defined as follows: let $\varphi \in \mathcal{S}(\mathbb{R}^n)$ with $\int_{\mathbb{R}^n} \varphi(x)\,dx = 1$ and set $\varphi_r(x) \defeq r^{-n} \varphi(x/r)$ for $r>0$ and $x\in \mathbb{R}^n$. Then
	$$H^1(\nu) \defeq \left\{ f \in \mathcal{S}'(\mathbb{R}^n): f^* \in L^1(\nu)\right\} \text{, with } 
	\|f\|_{H^1(\nu)} \defeq \|f^*\|_{L^1(\nu)},$$
where $f^*(x) \defeq \sup_{r>0} |f\ast \varphi_r(x)|$ is the maximal function. There are many equivalent ways to define the weighted Hardy spaces -- in terms of the square function, or in terms of an atomic decomposition -- see \cites{GarciaCuerva, Stromberg}.

Now let $f \defeq g_1 (Tg_2) - (T^*g_1)g_2$. Then for $g_1 \in L^q(\lb')$, $g_2 \in L^p(\mu)$, and any $b \in BMO^2(\nu)$:
	\begin{align}
	\left|\int_{\mathbb{R}^n} b(x)f(x)\,dx\right| & = \left| \int_{\mathbb{R}^n} g_1(x) [b, T]g_2(x)\,dx \right| & \label{E:Cor}\\
		& \leq \left\|g_1\right\|_{L^q(\lb')} \left\|[b, T]g_2\right\|_{L^p(\lb)} & \nonumber \\
		&  \lesssim  \|g_1\|_{L^q(\lb')} \|g_2\|_{L^p(\mu)} \|b\|_{BMO^2(\nu)} & \text{ by Theorem \ref{T:UpperBound}} \nonumber.
	\end{align}
Then clearly $f \in H^1(\nu)$, with $\|f\|_{H^1(\nu)}  \lesssim  \|g_1\|_{L^q(\lb')} \|g_2\|_{L^p(\mu)}$.

The second statement follows identically as in \cite{CRW}, with the appropriate modifications.  We consider the Banach space of functions $f \in L^1(\nu)$ which admit a decomposition as in \eqref{E:Decomposition}, normed by 
	$$\vert\vert\vert f\vert\vert\vert_{H^1(\nu)} \defeq \inf \left\{\sum_{i=1}^{n}\sum_{j=1}^{\infty} \|g_j^i\|_{L^q(\lb')} \|h_j^i\|_{L^p(\mu)}\right\},$$
where the infimum is over all possible decompositions of $f$. Part one of this corollary shows that this is a subspace of $H^1(\nu)$. Now \eqref{E:Cor} and Theorem \ref{T:LowerBound} show that
	$$\sup \left\{\left|\int_{\mathbb{R}^n} b(x)f(x)\,dx\right|: \vert\vert\vert f\vert\vert\vert_{H^1(\nu)} = 1\right\} \approx \|b\|_{BMO^2(\nu)},$$
which implies the norms $\vert\vert\vert \cdot\vert\vert\vert_{H^1(\nu)}$ and $\|\cdot\|_{H^1(\nu)}$ are equivalent (see \cite{CLMS} for the simple functional analysis argument that yields this). This completes the proof.

%--------------------------------------------BIBLIOGRAPHY--------------------------------------------%

\begin{bibdiv}
\begin{biblist}

\normalsize

\bib{MR2663410}{article}{
  author={Arcozzi, N.},
  author={Rochberg, R.},
  author={Sawyer, E.},
  author={Wick, B. D.},
  title={Bilinear forms on the Dirichlet space},
  journal={Anal. PDE},
  volume={3},
  date={2010},
  number={1},
  pages={21--47},
}

\bib{Bloom}{article}{
  author={Bloom, S.},
  title={A commutator theorem and weighted $BMO$},
  journal={Trans. Amer. Math. Soc.},
  volume={292},
  date={1985},
  number={1},
  pages={103--122},
}

\bib{MR1124164}{article}{
   author={Buckley, Stephen M.},
   title={Estimates for operator norms on weighted spaces and reverse Jensen
   inequalities},
   journal={Trans. Amer. Math. Soc.},
   volume={340},
   date={1993},
   number={1},
   pages={253--272}
}

\bib{MR2898705}{article}{
  author={Cascante, C.},
  author={Ortega, J. M.},
  title={On a characterization of bilinear forms on the Dirichlet space},
  journal={Proc. Amer. Math. Soc.},
  volume={140},
  date={2012},
  number={7},
  pages={2429--2440},
}

\bib{CLMS}{article}{
  author={Coifman, R.},
  author={Lions, P.-L.},
  author={Meyer, Y.},
  author={Semmes, S.},
  title={Compensated compactness and Hardy spaces},
  journal={J. Math. Pures Appl. (9)},
  volume={72},
  date={1993},
  number={3},
  pages={247--286},
}

\bib{MR957051}{article}{
  author={Coifman, R. R.},
  author={Murai, T.},
  title={Commutators on the potential-theoretic energy spaces},
  journal={Tohoku Math. J. (2)},
  volume={40},
  date={1988},
  number={3},
  pages={397--407},
}

\bib{CRW}{article}{
  author={Coifman, R. R.},
  author={Rochberg, R.},
  author={Weiss, Guido},
  title={Factorization theorems for Hardy spaces in several variables},
  journal={Ann. of Math. (2)},
  volume={103},
  date={1976},
  number={3},
  pages={611--635},
}

\bib{DalencOu}{article}{
  author={Dalenc, L.},
  author={Ou, Y.},
  title={Upper Bound for Multi-parameter Iterated Commutators},
  pages={1--25},
  eprint={http://arxiv.org/abs/1401.5994},
  year={2014},
}

\bib{Extrapolation}{article}{
  author={Dragi\u {c}evi\'c, O.},
  author={Grafakos, L.},
  author={Pereyra, M. C.},
  author={Petermichl, S.},
  title={Extrapolation and sharp norm estimates for classical operators on weighted Lebesgue spaces},
  journal={Publ. Math},
  volume={49},
  date={2005},
  number={1},
  pages={73--91},
}

\bib{MR1961195}{article}{
  author={Ferguson, S. H.},
  author={Lacey, M. T.},
  title={A characterization of product $BMO$ by commutators},
  journal={Acta Math.},
  volume={189},
  date={2002},
  number={2},
  pages={143--160},
}

\bib{GCDuality}{article}{
  author={Garc{\'{\i }}a-Cuerva, J.},
  title={Weighted Hardy spaces},
  conference={ title={Harmonic analysis in Euclidean spaces}, address={Proc. Sympos. Pure Math., Williams Coll., Williamstown, Mass.}, date={1978}, },
  book={ series={Proc. Sympos. Pure Math., XXXV, Part}, publisher={Amer. Math. Soc., Providence, R.I.}, },
  date={1979},
  pages={253--261},
}

\bib{GarciaCuerva}{article}{
  author={Garc{\'{\i }}a-Cuerva, J.},
  author={Martell, J. M.},
  title={Wavelet characterization of weighted spaces},
  journal={J. Geom. Anal.},
  volume={11},
  date={2001},
  number={2},
  pages={241--264},
}

\bib{MR518108}{article}{
  author={Garnett, J. B.},
  author={Latter, R. H.},
  title={The atomic decomposition for Hardy spaces in several complex variables},
  journal={Duke Math. J.},
  volume={45},
  date={1978},
  number={4},
  pages={815--845},
}

\bib{HLW}{article}{
  author={Holmes, I.},
  author={Lacey, M. T.},
  author={Wick, B. D.},
  title={Bloom's Inequality: Commutators in a Two-Weight Setting},
  year={2015},
  pages={1--9},
  eprint={http://arxiv.org/abs/1505.07947},
}

\bib{HytRepOrig}{article}{
  author={Hyt{\"o}nen, T.},
  title={Representation of singular integrals by dyadic operators, and the A\_2 theorem},
  eprint={http://arxiv.org/abs/1108.5119},
  year={2011},
}

\bib{HytRep}{article}{
  author={Hyt{\"o}nen, T.},
  title={The sharp weighted bound for general Calder\'on-Zygmund operators},
  journal={Ann. of Math. (2)},
  volume={175},
  date={2012},
  number={3},
  pages={1473--1506},
}

\bib{HytLacey}{article}{
  author={Hyt{\"o}nen, T. P.},
  author={Lacey, M. T.},
  author={Martikainen, H.},
  author={Orponen, T.},
  author={Reguera, M},
  author={Sawyer, E. T.},
  author={Uriarte-Tuero, I.},
  title={Weak and strong type estimates for maximal truncations of Calder\'on-Zygmund operators on $A_p$ weighted spaces},
  journal={J. Anal. Math.},
  volume={118},
  date={2012},
  number={1},
  pages={177--220},
}

\bib{HytPerezTV}{article}{
  author={Hyt{\"o}nen, T.},
  author={P{\'e}rez, C.},
  author={Treil, S.},
  author={Volberg, A.},
  title={Sharp weighted estimates for dyadic shifts and the A2 conjecture},
  journal={Journal f{\"u}r die reine und angewandte Mathematik},
  volume={2014},
  number={687},
  pages={43--86},
  date={2012},
}

\bib{HytSharpRH}{article}{
  author={Hyt{\"o}nen, T.},
  author={P{\'e}rez, C.},
  title={Sharp weighted bounds involving $A_\infty $},
  journal={Anal. PDE},
  volume={6},
  date={2013},
  number={4},
  pages={777--818},
}

\bib{MR3285858}{article}{
   author={Lacey, Michael T.},
   title={Two-weight inequality for the Hilbert transform: a real variable
   characterization, II},
   journal={Duke Math. J.},
   volume={163},
   date={2014},
   number={15},
   pages={2821--2840}
}

\bib{Lacey}{article}{
  author={Lacey, M. T.},
  title={On the $A_2$ inequality for Calder\'on-Zygmund operators},
  conference={ title={Recent advances in harmonic analysis and applications}, },
  book={ series={Springer Proc. Math. Stat.}, volume={25}, publisher={Springer, New York}, },
  date={2013},
  pages={235--242},
}

\bib{MR3007645}{article}{
  author={Lacey, M. T.},
  author={Petermichl, S.},
  author={Pipher, J. C.},
  author={Wick, B. D.},
  title={Multi-parameter Div-Curl lemmas},
  journal={Bull. Lond. Math. Soc.},
  volume={44},
  date={2012},
  number={6},
  pages={1123--1131},
}

\bib{MR2530853}{article}{
  author={Lacey, M. T.},
  author={Petermichl, S.},
  author={Pipher, J. C.},
  author={Wick, B. D.},
  title={Multiparameter Riesz commutators},
  journal={Amer. J. Math.},
  volume={131},
  date={2009},
  number={3},
  pages={731--769},
}

\bib{MR3285857}{article}{
   author={Lacey, Michael T.},
   author={Sawyer, Eric T.},
   author={Shen, Chun-Yen},
   author={Uriarte-Tuero, Ignacio},
   title={Two-weight inequality for the Hilbert transform: a real variable
   characterization, I},
   journal={Duke Math. J.},
   volume={163},
   date={2014},
   number={15},
   pages={2795--2820}
}

\bib{combined}{article}{
  author={Lacey, Michael T.},
  author={Sawyer, Eric T.},
  author={Shen, Chun-Yun},
  author={Uriarte-Tuero, Ignacio},
  author={Wick, Brett D.}, 
  title={Two Weight Inequalities for the Cauchy Transform from $ \mathbb{R}$ to $ \mathbb{C} _+$}, 
  eprint={http://www.arXiv.org/abs/},
}

\bib{MR2491875}{article}{
  author={Lacey, M. T.},
  author={Terwilleger, E.},
  title={Hankel operators in several complex variables and product $BMO$},
  journal={Houston J. Math.},
  volume={35},
  date={2009},
  number={1},
  pages={159--183},
}

\bib{LW}{article}{
  author={Lacey, Michael T.},
  author={Wick, Brett D.},
  title={Two Weight Inequalities for Riesz Transforms: Uniformly Full Dimension Weights},
  eprint={http://arxiv.org/abs/1312.6163}
}

\bib{LeeLinLin}{article}{
  author={Lee, M.-Y.},
  author={Lin, C.-C.},
  author={Lin, Y.-C.},
  title={A wavelet characterization for the dual of weighted Hardy spaces},
  journal={Proc. Amer. Math. Soc.},
  volume={137},
  date={2009},
  number={12},
  pages={4219--4225},
}

\bib{MR2200743}{article}{
   author={Lerner, Andrei K.},
   title={On some sharp weighted norm inequalities},
   journal={J. Funct. Anal.},
   volume={232},
   date={2006},
   number={2},
   pages={477--494}
}

\bib{MR0293384}{article}{
  author={Muckenhoupt, Benjamin},
  title={Weighted norm inequalities for the Hardy maximal function},
  journal={Trans. Amer. Math. Soc.},
  volume={165},
  date={1972},
  pages={207--226}
}

\bib{MuckWheeden}{article}{
  author={Muckenhoupt, B.},
  author={Wheeden, R. L.},
  title={Weighted bounded mean oscillation and the Hilbert transform},
  journal={Studia Math.},
  volume={54},
  date={1975/76},
  number={3},
  pages={221--237},
}

\bib{MR1880830}{article}{
  author={Nazarov, F.},
  author={Pisier, G.},
  author={Treil, S.},
  author={Volberg, A.},
  title={Sharp estimates in vector Carleson imbedding theorem and for vector paraproducts},
  journal={J. Reine Angew. Math.},
  volume={542},
  date={2002},
  pages={147--171},
}

\bib{MR2407233}{article}{
   author={Nazarov, F.},
   author={Treil, S.},
   author={Volberg, A.},
   title={Two weight inequalities for individual Haar multipliers and other
   well localized operators},
   journal={Math. Res. Lett.},
   volume={15},
   date={2008},
   number={3},
   pages={583--597}
}

\bib{MR0082945}{article}{
  author={Nehari, Z.},
  title={On bounded bilinear forms},
  journal={Ann. of Math. (2)},
  volume={65},
  date={1957},
  pages={153--162},
}

\bib{MR1756958}{article}{
   author={Petermichl, Stefanie},
   title={Dyadic shifts and a logarithmic estimate for Hankel operators with
   matrix symbol},
   language={English, with English and French summaries},
   journal={C. R. Acad. Sci. Paris S\'er. I Math.},
   volume={330},
   date={2000},
   number={6},
   pages={455--460}
 }

\bib{PetermichlRiesz}{article}{
  author={Petermichl, S.},
  title={The sharp weighted bound for the Riesz transforms},
  journal={Proc. Amer. Math. Soc.},
  volume={136},
  date={2008},
  number={4},
  pages={1237--1249},
}

\bib{MR930072}{article}{
   author={Sawyer, Eric T.},
   title={A characterization of two weight norm inequalities for fractional
   and Poisson integrals},
   journal={Trans. Amer. Math. Soc.},
   volume={308},
   date={1988},
   number={2},
   pages={533--545}
}

\bib{MR676801}{article}{
   author={Sawyer, Eric T.},
   title={A characterization of a two-weight norm inequality for maximal
   operators},
   journal={Studia Math.},
   volume={75},
   date={1982},
   number={1},
   pages={1--11}
}

\bib{Stromberg}{book}{
  author={Str{\"o}mberg, J.-O.},
  author={Torchinsky, A.},
  title={Weighted Hardy spaces},
  series={Lecture Notes in Mathematics},
  volume={1381},
  publisher={Springer-Verlag, Berlin},
  date={1989},
}

\bib{TreilSharpA2}{article}{
  author={Treil, S.},
  title={Sharp $A_2$ estimates of Haar shifts via Bellman function},
  date={2011},
  pages={1-23},
  eprint={http://arxiv.org/abs/1105.2252},
}

\bib{MR618077}{article}{
  author={Uchiyama, A.},
  title={The factorization of $H^{p}$ on the space of homogeneous type},
  journal={Pacific J. Math.},
  volume={92},
  date={1981},
  number={2},
  pages={453--468},
}

\bib{MR0467384}{article}{
  author={Uchiyama, A.},
  title={On the compactness of operators of Hankel type},
  journal={T\^ohoku Math. J. (2)},
  volume={30},
  date={1978},
  number={1},
  pages={163--171},
}

\bib{Wu}{article}{
  author={Wu, S.},
  title={A wavelet characterization for weighted Hardy Spaces},
  journal={Rev. Mat. Iberoamericana},
  volume={8},
  number={3},
  date={1992},
  pages={329--349},
}

\end{biblist}
\end{bibdiv}

%%%%%%%%%%%%
%%%%END%%%%%%
%%%%%%%%%%%%

\end{document}